\documentclass[10pt]{article}

\usepackage[margin=1in]{geometry}
\usepackage{latexsym, amssymb, amsfonts, amsthm, graphics, graphicx, bbm}
\usepackage{amsmath}
\usepackage{epstopdf}
\usepackage[pdftex,colorlinks,linkcolor=blue,menucolor=red,backref=false,bookmarks=true]{hyperref}
\usepackage{cleveref}
\usepackage{mathtools}
\usepackage{subfig}
\usepackage{xcolor}
\usepackage[font=small,labelfont=bf]{caption}
\usepackage{orcidlink}
\usepackage{newpxtext}
\usepackage{enumitem}
\usepackage{array}
\usepackage{multicol}
\usepackage{multirow}
\usepackage{tikz}
\usetikzlibrary{shapes,arrows,positioning, arrows.meta, patterns}
\usepackage[T1]{fontenc}
\usepackage{pifont}
\usepackage{adjustbox}
\usepackage{authblk}

\usepackage{mathtools}
\usepackage{pgfplots}
\pgfplotsset{compat=1.18}

\newtheorem{theorem}{Theorem}[section]

\newtheorem{corollary}[theorem]{Corollary}

\newtheorem{definition}[theorem]{Definition}
\newtheorem{example}[theorem]{Example}

\newtheorem{lemma}[theorem]{Lemma}

\newtheorem{problem}[theorem]{Problem}
\newtheorem{proposition}[theorem]{Proposition}
\newtheorem{remark}[theorem]{Remark}

\newcommand{\norm}[1]{\left\lVert #1 \right\rVert}
\newcommand{\field}[1]{\mathbb{#1}}
\newcommand{\R}{\field{R}}
\newcommand{\N}{\field{N}}

\newcommand{\tr}{\mathrm{tr}}

\newcommand{\Law}{\mathcal{L}}
\newcommand{\ms}{\bar \sigma}

\DeclareMathOperator{\diag}{diag}
\DeclareMathOperator{\row}{row}
\DeclareMathOperator{\col}{col}
\DeclareMathOperator{\con}{con}

\DeclareMathOperator{\spec}{spec}

\numberwithin{equation}{section}

\title{Anchoring and Mixed-Norm Contractions in Averaging-Learning Dynamics}

\author[1,2]{Ionel Popescu}
\author[3]{Jeven Syatriadi}
\author[ ]{Tushar Vaidya}

\affil[1]{University of Bucharest, Faculty of Mathematics and Computer Science, Bucharest, Romania}
\affil[2]{Simion Stoilow Institute of Mathematics of the Romanian Academy, Bucharest, Romania}
\affil[3]{Division of Mathematical Sciences, Nanyang Technological University, Singapore}

\date{}

\begin{document}
\maketitle

\begin{abstract}
A single informed agent can draw an arbitrarily large network to the ground truth. This is the sharpest consequence of the ``Averaging plus Learning'' framework studied here, where agents update opinions by socially averaging neighbours while some receive private feedback at heterogeneous rates. The key is a graph-theoretic property we call \emph{condensely anchored}, which implies convergence to the correct consensus on fixed networks.
In the original framework of Popescu and Vaidya (2023), every agent was required to learn. Removing that requirement changes the problem fundamentally: the underlying graph must now carry the signal from a handful of anchors to everyone else. When learning rates decay to zero, a persistence condition on the rates alone suffices, with no uniform connectivity or aperiodicity assumed. The hardest case is intermittent connectivity, where no single time step contracts in any standard norm. A mixed-operator-norm framework is developed that extracts two-step contraction from the interplay between aggregate learning mass and entrywise diffusion of influence, a mechanism new to consensus literature. Finally, we demonstrate the framework's robustness: vanishing noise preserves convergence to the ground truth, whereas persistent noise drives the system to a limiting law.
\end{abstract}


\noindent\textbf{MSC 2020:} Primary 93D50; Secondary 91D30, 15B51, 15A60, 93E15.

\noindent\textbf{Keywords:} Truth-tracking consensus, row-stochastic matrices, mixed operator norms, additive noise, Wasserstein distance.

\tableofcontents


\section{Introduction}
The DeGroot model \cite{degroot1974reaching} has become a workhorse for understanding how consensus forms in networked systems \cite{coculescu2024opinion,elboim2025edge,ghosh2022synchronized,molavi2018theory,sargent2024economic,sugishita2021opinion}. In this framework, each agent repeatedly averages the opinions of their neighbors, leading the group to a common belief. While simple, this model touches on deep mathematical questions about the convergence of matrix products, with foundational work showing that consensus is guaranteed by the weak ergodicity of the underlying influence matrices \cite{chatterjee1977towards, seneta2006non,touri2013product}.

However, mathematical convergence does not guarantee a desirable outcome.  Indeed, foundational models of social learning show that even rational agents may fail to aggregate information efficiently, leading to persistent mislearning or consensus on the wrong belief \cite{acemoglu2011opinion}. Furthermore, a group's final opinion can be disproportionately swayed by a few highly influential agents, regardless of the accuracy of their initial beliefs \cite{golub2010naive}. This creates a tension: mathematics tells us when agreement is possible, but this agreement may be fragile, biased, or systematically wrong.

To address these limitations, the broader literature has explored several paths. One path focuses on outcomes that diverge entirely from a single consensus, with models of signed networks explaining opinion polarization \cite{tian2024spreading} and others showing how opinions can drift erratically without ever converging \cite{rabinovich2021erratic}. A second, highly influential path introduces the concept of ``anchoring," most notably in the Friedkin-Johnsen model \cite{friedkin1990social}, where agents remain partially tethered to their initial, private opinions. This elegantly explains persistent disagreement but frames the anchor as an internal, subjective bias. Our framework addresses the problem of tracking a known ground truth in a deterministic system. This is distinct from the distributed optimization literature, where agents aim to find an unknown optimum. For instance, the work of \cite{reisizadeh2025almost} proves almost sure convergence to a minimizer by using a stochastic gradient method, with convergence guarantees based on conditions on the step-sizes. In contrast, we prove asymptotic convergence by deriving conditions directly on the persistence of the learning signal itself.

Recent work in opinion dynamics has made significant progress in understanding how societies can converge to truth through social learning: \cite{glass2021opinion} analyze how truth-seeking agents interact with social influence and conflicting information sources in bounded confidence models, establishing conditions under which consensus on truth can emerge despite the presence of competing misinformation. Comprehensive surveys \cite{bernardo2024bounded, dong2018survey} document the rich landscape of consensus mechanisms and their convergence properties. However, most existing analyses operate under restrictive assumptions: static network topologies where interaction patterns remain fixed, constant learning rates where agents weigh information uniformly over time, or fixed confidence bounds that don't adapt to uncertainty. In many real world settings, from scientific communities updating beliefs as evidence accumulates, to distributed sensor networks with time-varying communication links, to social networks where attention and trust evolve, these assumptions are violated.
This paper addresses the question: Under what conditions do agents on time-varying networks, using vanishing step sizes typical of stochastic approximation, converge not just to consensus but to correct consensus when truth signals are available? This setting combines three elements that have been largely studied in isolation:

\begin{enumerate}
    \item Time-varying topology: Network structure evolves, requiring joint connectivity conditions \cite{Jadbabiecoord2003}.
    \item Vanishing learning rates: Standard in stochastic approximation \cite{robbins1951stochastic} but rarely analyzed with consensus dynamics. In this document, vanishing learning rates are a modelling feature.
    \item External truth signals: Present in opinion dynamics \cite{fagnani2009average,glass2021opinion} but typically with fixed weights.
\end{enumerate}

The interplay creates technical challenges that neither classical consensus theory \cite{chatterjee1977towards, touri2013product} nor standard stochastic approximation directly address. Our main contribution is establishing convergence guarantees under explicit conditions on \emph{network connectivity, signal quality, learning rate schedules} that bridge these literatures.

Our work builds on this and motivates our central research question: can the anchoring mechanism be repurposed to lead a group not just toward agreement, but toward an objective, external ground truth? We address this question by studying an ``Averaging plus Learning" model that blends social influence with direct feedback from an external signal $\ms$ \cite{popescu2023averaging}:
\begin{equation} \label{eq:average+learn}
X_{t+1} = \underbrace{A_t X_t}_{\text{averaging}} + \underbrace{\mathcal{E}_t(\ms - X_t)}_{\text{learning}}.
\end{equation}

The first term  $A_t X_t$, where $A_t$ is a \emph{row-stochastic} matrix, represents the standard DeGroot-style social learning, where agents average the opinions of their neighbors.
The second term $\mathcal{E}_t (\ms -X_t)$ introduces an external learning mechanism.
Here, the vector $\ms -X_t$ can be interpreted as a private signal each agent receives, indicating their personal error relative to the ground truth $\ms$.
The response to this signal is modulated by the diagonal matrix $\mathcal{E}_t$ which represents the heterogeneous learning ability of the agents, reflecting their varying access to, or trust in, the external information.
This framework, depicted in Figure~\ref{fig:avglearn}, allows us to explore how truth can propagate through a network even when learning ability is limited or intermittent.

Our primary contribution is to establish rigorous and checkable conditions for convergence to the ground truth, relaxing common assumptions on network connectivity and learning behavior, particularly in the challenging context of time-varying networks \cite{Jadbabiecoord2003}.
\begin{itemize}
\item For the \textbf{time-invariant case}, spectral radius arguments show that consensus can be reached even when some agents are defective (i.e., having zero learning rates), as long as the network is \emph{condensely anchored} to the balanced learners.

\item For the more complex \textbf{time-varying case}, where spectral methods fail, we develop a novel framework using \textbf{mixed-operator norms} to establish verifiable entrywise and trace bounds that guarantee contraction.

\item We analyze a \textbf{vanishing-learning regime} where $\mathcal{E}_t(i) \to 0$. We prove that a minimal persistence condition, $\sum_t \min_i \mathcal{E}_t(i)=\infty$, is sufficient for convergence. This is significant as it covers intermittently disconnected topologies and learning rates that may be zero infinitely often.
\end{itemize}

Our focus on these asymptotic convergence properties provides a complementary perspective to studies that analyze the transient, finite-time dynamics of influence systems, such as the preservation of the 'wisdom of the crowd' effect \cite{bullo2020finite}.

The paper is organized as follows. 
Section~\ref{subsec:contributions} lays out a guide of the how the analysis progresses.
Section~\ref{subsec:noteprelim} establishes our notation.
Section~\ref{sec:timeinvar} treats the time-invariant model. \Cref{sec:timvar} explores the time-varying case through the vanishing learning regime and the mixed-operator-norms framework. In \Cref{sec:noise} we treat the case of vanishing noise and also persistent noise.    

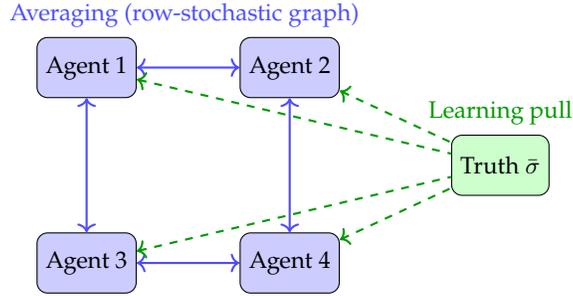
\begin{figure}[htbp]
\centering
\begin{tikzpicture}[scale=1.0, every node/.style={font=\small}]

    \node[rectangle, rounded corners, draw, fill=blue!20, minimum size=0.8cm] (x1) at (-0.5, 1.3) {Agent 1};
    \node[rectangle, rounded corners, draw, fill=blue!20, minimum size=0.8cm] (x2) at (2.2, 1.3) {Agent 2};
    \node[rectangle, rounded corners, draw, fill=blue!20, minimum size=0.8cm] (x3) at (-0.5, -1.3) {Agent 3};
    \node[rectangle, rounded corners, draw, fill=blue!20, minimum size=0.8cm] (x4) at (2.2, -1.3) {Agent 4};

    \node[rectangle, rounded corners, draw, fill=green!20, minimum width=1.2cm, minimum height=0.8cm] (truth) at (5, 0) {Truth $\ms$};

    \draw[<->, thick, blue!70] (x1) -- (x2);
    \draw[<->, thick, blue!70] (x1) -- (x3);
    \draw[<->, thick, blue!70] (x2) -- (x4);
    \draw[<->, thick, blue!70] (x3) -- (x4);

    \draw[<-, dashed, thick, green!60!black] (x1) -- (truth);
    \draw[<-, dashed, thick, green!60!black] (x2) -- (truth);
    \draw[<-, dashed, thick, green!60!black] (x3) -- (truth);
    \draw[<-, dashed, thick, green!60!black] (x4) -- (truth);
    
    \node at (0.8, 2) {\textcolor{blue!70}{Averaging (row-stochastic graph)}};
    \node at (5, 0.7) {\textcolor{green!60!black}{Learning pull}};
\end{tikzpicture}
\caption{Averaging plus learning model. Solid blue edges represent averaging across a fixed influence network. Dashed green arrows represent learning pulls from the ground truth $\ms$.}
\label{fig:avglearn}
\end{figure}

\subsection{Summary of contributions}
\label{subsec:contributions}
Our results are organised along a chain of increasingly general settings, summarised
in the diagram below. In the time-invariant case, we identify a graph-theoretic property,
\emph{condensely anchored}, and show it is equivalent to zero-convergence when
$A - \mathcal{E} \geq 0$. 
More generally, under anchoring interval assumptions, condensely anchored implies eventual consensus for agents.
For time-varying networks, we develop two complementary approaches: a
persistence argument that requires only $\sum_t \min_i \mathcal{E}_t(i) = \infty$ with no topological
assumptions, and a mixed-operator-norm framework that extracts strict two-step contraction
even when no single step contracts. The stochastic extensions show that vanishing noise
preserves convergence while persistent noise drives the process to a limiting law.
\Cref{fig:summary} below traces the logical
progression of the main results.
\medskip

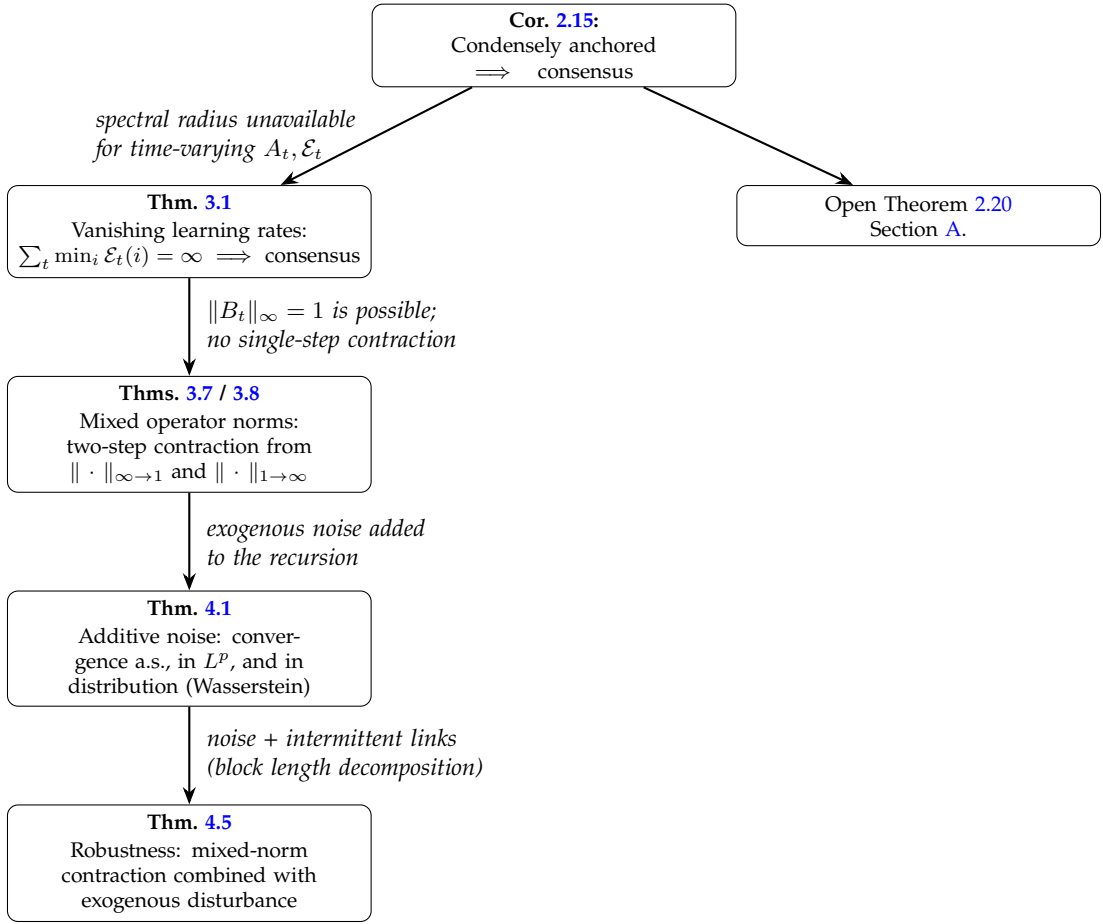
\begin{figure}
\begin{center}
\begin{tikzpicture}[
    node distance=1.3cm and 0cm,
    block/.style={
        rectangle, draw, rounded corners,
        text width=4.6cm, align=center,
        minimum height=0.8cm, font=\footnotesize
    },
    reason/.style={
        font=\small\itshape, text width=5cm, align=left
    },
    arr/.style={-{Stealth}, thick}
]

\node[block] (T1) {%
    \textbf{Cor.~\ref{cor:condanchor}:}\\ Condensely anchored
    $\implies$ consensus
};

\node[block, below right=of T1] (T6) {%
    Open \Cref{prob:genconstantbound} \\ \Cref{appendix}.
};

\node[block, below left=of T1] (T2) {%
    \textbf{Thm.~\ref{thm:main_vanish}}\\[1pt]
    Vanishing learning rates:
    \(
    \sum_t \min_i \mathcal{E}_t(i) = \infty \implies \text{consensus}
    \)
};

\node[block, below=of T2] (T3) {%
    \textbf{Thms.~\ref{thm:normcond} / \ref{thm:normcondgeneral}}\\[1pt]
    Mixed operator norms:\\
    two-step contraction from $\|\cdot\|_{\infty \to 1}$ and $\|\cdot\|_{1 \to \infty}$
};

\node[block, below=of T3] (T4) {%
    \textbf{Thm.~\ref{t:35}}\\[1pt]
    Additive noise: convergence a.s., in $L^p$,
    and in distribution (Wasserstein)
};

\node[block, below=of T4] (T5) {%
    \textbf{Thm.~\ref{thm:norm:noisy-prob}}\\[1pt]
    Robustness: mixed-norm contraction 
    combined with exogenous disturbance
};

\draw[arr] (T1) -- (T2)
    node[reason, midway, left, xshift=40pt] {spectral radius unavailable\\for time-varying $A_t, \mathcal{E}_t$};

\draw[arr] (T2) -- (T3)
    node[reason, midway, right, xshift=3pt] {$\|B_t\|_\infty = 1$ is possible;\\no single-step contraction};

\draw[arr] (T3) -- (T4)
    node[reason, midway, right, xshift=3pt] {exogenous noise added\\to the recursion};

\draw[arr] (T4) -- (T5)
    node[reason, midway, right, xshift=3pt] {noise + intermittent links\\(block length decomposition)};

\draw[arr] (T1) -- (T6);
\end{tikzpicture}
\end{center}
\caption{Summary of contributions}
\label{fig:summary}
\end{figure}

\subsection{Notation and preliminaries}
\label{subsec:noteprelim}

Let $n$ be a positive integer.
We denote the identity matrix of size $n$ and the zero square matrix of size $n$ by $I_n$ and $0_n$, respectively.
The all-ones vector of length $n$ is denoted by $\mathbf{1}_n$.
If it is clear from the context, we may omit the subscript.
We also use $0$ to denote the zero vector.
Let $M$ be a real square matrix of size $n$.
For each $i,j \in \{1,2,\dots,n\}$, the entry of $M$ on the $i$-th row and the $j$-th column is denoted by $M(i,j)$.
Let $i \in \{1,2,\dots,n\}$.
The $i$-th row of $M$ is denoted by $\row_i(M)$, which is a row vector of length $n$.
The $i$-th row-sum of $M$ refers to the sum of the entries of $\row_i(M)$.
Meanwhile, the $i$-th column of $M$ is denoted by $\col_i(M)$, which is a column vector of length $n$.
If $M$ is a diagonal matrix of size $n$, we use $M(i)$ to denote the $i$-th diagonal entry of $M$.
For brevity, we alternatively write $M = \diag(M(1), M(2), \dots, M(n))$.
Let $\mathbf{v}$ be a vector in $\mathbb{R}^n$ and for each $i \in \{1,2,\dots,n\}$, let $\mathbf{v}(i)$ denote the $i$-th entry of $\mathbf{v}$.

Let $M_1$ and $M_2$ be real square matrices of size $n > 0$.
We write $M_1 \leq M_2$ if $M_1$ is less than or equal to $M_2$ entrywise.
Similarly, we write $M_1 \geq M_2$ if $M_1$ is greater than or equal to $M_2$ entrywise.
If a real square matrix $M$ satisfies $M \geq 0$, then we call $M$ a \textbf{nonnegative matrix}.
If $\mathbf{v}$ is a vector in $\mathbb{R}^n$, then we write $\mathbf{v} \geq 0$ to indicate that all entries of $\mathbf{v}$ are nonnegative.

If $x \in \mathbb{C}$ then $\lvert x \rvert$ denotes the modulus of $x$.
Let $M$ be a complex square matrix of size $n > 0$.
Let $\lvert M \rvert$ be the real square matrix of size $n$ such that $\lvert M \rvert(i,j) = \lvert M(i,j) \rvert$ for all $i,j \in \{1,2,\dots,n\}$.
If $\lambda_1$, $\lambda_2$, $\dots$, $\lambda_n$ are all of the eigenvalues of $M$, then the \textbf{spectral radius} of $M$ is defined to be
\[
\rho(M) \coloneqq \max \{ \lvert \lambda_i \rvert : 1 \leq i \leq n \}.
\]
By \cite{DP65} or \cite[Theorem 2.21]{Varga00}, we have that $\rho(M) \leq \rho(\lvert M \rvert)$.
We call $M$ \textbf{zero-convergent} if $\displaystyle \lim_{k \to \infty} M^k = 0$.
We have the following well-known characterization of zero-convergent matrices.
\begin{theorem}[{\cite[Theorem 5.6.12]{HJ85}}]
\label{thm:zeroconv}
    Let $M$ be a complex square matrix.
    Then $M$ is zero-convergent if and only if $\rho(M)<1$.
\end{theorem}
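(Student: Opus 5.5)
The plan is to prove both directions through the Jordan canonical form. Write $M = S J S^{-1}$, where $J = \diag(J_1, \dots, J_r)$ is block diagonal and each block is $J_\ell = \lambda_\ell I + N_\ell$ with $N_\ell$ nilpotent of size $m_\ell$. Then $M^k = S J^k S^{-1}$. Since $S$ and $S^{-1}$ are fixed invertible matrices, $M^k \to 0$ if and only if $J^k \to 0$, which in turn holds if and only if $J_\ell^k \to 0$ for every block. So the statement reduces to a single Jordan block.

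\emph{Necessity.} Suppose $M^k \to 0$, and let $\lambda$ be any eigenvalue with eigenvector $v \neq 0$. Then $M^k v = \lambda^k v$. The left side tends to $0$, so $\lambda^k v \to 0$. Since $v \neq 0$, this forces $\lambda^k \to 0$, i.e. $|\lambda| < 1$. Hence $\rho(M) < 1$. This direction does not even need the Jordan form.

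\emph{Sufficiency.} Assume $\rho(M) < 1$. For one block, $N_\ell^{m_\ell} = 0$ and $\lambda_\ell I$ commutes with $N_\ell$, so the binomial theorem gives
\[
J_\ell^k = \sum_{j=0}^{m_\ell - 1} \binom{k}{j} \lambda_\ell^{k-j} N_\ell^j .
\]
Each entry of $J_\ell^k$ is therefore a finite sum of terms $\binom{k}{j}\lambda_\ell^{k-j}$ with $j < m_\ell$.

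The main step is showing each such term tends to zero, since this is the only place where the polynomial growth of the binomial coefficient must be beaten. We have $\binom{k}{j} \le k^j$. If $\lambda_\ell = 0$, the term vanishes once $k \ge m_\ell$. Otherwise set $q = |\lambda_\ell| < 1$, so the term is bounded by $k^j q^{k-j}$. This tends to zero because $k^{j} q^{k} \to 0$: for example, the ratio of consecutive terms tends to $q < 1$. Summing finitely many such terms gives $J_\ell^k \to 0$, hence $J^k \to 0$ and $M^k \to 0$.

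As an alternative route for sufficiency, one could choose an induced norm with $\|M\| < 1$, via the standard lemma that for every $\varepsilon > 0$ some induced norm satisfies $\|M\| \le \rho(M) + \varepsilon$. Then $\|M^k\| \le \|M\|^k \to 0$. However, proving that lemma itself uses the Jordan (or Schur) form, so the direct computation above is the cleaner plan.
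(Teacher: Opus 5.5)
Your proof is correct and complete. The paper itself gives no proof of this statement: it quotes Horn--Johnson and uses the result as a black box. In that source, necessity is exactly your eigenvector argument, but sufficiency uses a different key lemma (HJ85, Lemma 5.6.10). There one writes $M = U^{*} T U$ by Schur's unitary triangularization and applies the diagonal similarity $D_t = \diag(t, t^2, \dots, t^n)$. For large $t$ this makes the off-diagonal entries of $D_t T D_t^{-1}$ arbitrarily small, which yields a matrix norm with $\norm{M} \le \rho(M) + \varepsilon < 1$; then $\norm{M^k} \le \norm{M}^k \to 0$.

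Your Jordan-form computation instead writes the entries of $M^k$ explicitly as combinations of $\binom{k}{j}\lambda^{k-j}$. (Read this for $k \ge m_\ell - 1$, which is all a limit needs.) This makes visible that non-diagonalizability contributes only polynomial factors, which the geometric decay absorbs.

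The norm route needs only Schur triangularization, a lighter tool than the Jordan canonical form. So it is not the more expensive option your last paragraph suggests. It also gives the quantitative bound $\norm{M^k} \le C_\varepsilon (\rho(M)+\varepsilon)^k$ in one line; your route gets the same after bounding $k^j q^{k} \le C_\varepsilon (q+\varepsilon)^k$.

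For the paper's purposes either proof suffices, since the theorem is applied only to single fixed matrices $A-\mathcal{E}$. Neither the Jordan basis nor the adapted norm is uniform over a family of matrices. That is exactly why the time-varying section has to abandon spectral arguments and work with fixed mixed operator norms instead.
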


Let $M$ be a real square matrix of size $n>0$ and let $\mathbf{v}$ be a vector in $\mathbb{R}^n$.
If $p \geq 1$ is a real number, then the $p$-norm of $\mathbf{v}$ is
\[
\norm{\mathbf{v}}_p = \left( \sum_{i=1}^n \lvert \mathbf{v}(i) \rvert^p \right)^{1/p},
\]
while the $\infty$-norm of $\mathbf{v}$ is $\displaystyle \norm{\mathbf{v}}_{\infty} = \max_{1 \leq i \leq n} \lvert \mathbf{v}(i) \rvert$.
We define the induced matrix $p$-norm as
\[
\lVert M \rVert_p \coloneqq \sup_{\mathbf{v} \in \mathbb{R}^n \backslash \{0\}} \frac{\lVert M \mathbf{v} \rVert_p}{\lVert \mathbf{v} \rVert_p},
\]
where $1 \leq p \leq \infty$.
In particular, if $p=1$ or $p=\infty$, we have the following formulae of the induced matrix $1$-norm and $\infty$-norm:
\begin{align*}
    \norm{M}_1 &= \max_{1 \leq i \leq n} \left( \textbf{1}_n^\top \, \col_i( \lvert M \rvert ) \right),\\
    \norm{M}_{\infty} &= \max_{1 \leq i \leq n} \left( \row_i( \lvert M \rvert ) \, \textbf{1}_n \right).
\end{align*}
We also have the inequality $\rho(M) \leq \norm{M}_p$ for any $1 \leq p \leq \infty$.

Let $M$ be a nonnegative matrix of positive size.
We use the following definitions:
\begin{itemize}
    \item $M$ is \textbf{row stochastic} if each row-sum of $M$ is equal to $1$.
    \item $M$ is \textbf{row substochastic} if each row-sum of $M$ is at most $1$.
    \item $M$ is \textbf{proper substochastic} if $M$ is row substochastic and at least one row-sum of $M$ is less than $1$.
\end{itemize}
Hence, it is clear that the set of row-substochastic matrices consists of row-stochastic and proper-substochastic matrices.
If $M$ is row substochastic then $\rho(M) \leq 1$.
In particular, if $M$ is row stochastic then $\rho(M) = 1$.

\subsection{Directed graphs and row-stochastic matrices}
\label{subsec:digraphstoch}

In this paper, we require some basic facts about directed graphs.
A \textbf{directed graph} (or \emph{digraph}) $G$ is an ordered pair $(V,E)$ consisting of the vertex set $V$ together with the set of \emph{arcs} $E$.
Each arc is an ordered pair $(i,j)$ where $i, j \in V$.
In our visual representation of digraphs, an arc $(i,j)$ is represented by an arrow starting from the \emph{initial vertex} $i$ and ending at the \emph{terminal vertex} $j$.
The arc $(i,i)$ is called a \emph{self-loop} at vertex $i$.
The \emph{outdegree} of a vertex $v \in V$ is the number of arcs $(i,j) \in E$ such that $v=i$.
Likewise, the \emph{indegree} of a vertex $v \in V$ is the number of arcs $(i,j) \in E$ such that $v=j$.
Let $G=(V, E)$ be a digraph and let $W \subseteq V$.
The digraph \emph{induced} by $W$ is the digraph $G \vert_W = (W, E \vert_W)$ where $E \vert_W$ is the set of arcs $(i, j)$ such that $(i, j) \in E \vert_W$ if and only if $i, j \in W$.

Let $k \geq 0$ be an integer.
A \textbf{walk} $\omega$ starting from vertex $i_0$ and ending at vertex $i_k$ in a digraph $G=(V,E)$ is a sequence of vertices $\langle i_0, i_1, \dots, i_k \rangle$ such that $(i_t, i_{t+1}) \in E$ for all $t \in \{0,1,\dots,k-1\}$.
The \emph{length} of the walk $\omega = \langle i_0, i_1, \dots, i_k \rangle$, denoted by $\lvert \omega \rvert$, is equal to $k$.
We can also write the walk $\omega = \langle i_0, i_1, \dots, i_k \rangle$ as $i_0 \to i_1 \to \dots \to i_k$.
Note that a walk may include a self-loop, or a vertex can be visited more than once.
A walk that starts and ends at the same vertex is called a \textbf{closed walk}.
For example, we consider any walk of length 0 to be a closed walk.
A \textbf{cycle} is a closed walk of positive length with no repeated vertex.
We consider a self-loop as a cycle of length $1$.
A directed graph with no cycle is called a \textbf{directed acyclic graph (DAG)}.

Let $G=(V,E)$ be a digraph.
We define a relation on $V$ where $i \sim j$ if and only if there is a walk in $G$ from $i$ to $j$, and there is also a walk from $j$ to $i$.
This relation is an equivalence relation on $V$.
An equivalence class induced by this equivalence relation is called a \textbf{strongly connected component (SCC)} of $G$.
If $G$ consists of only one SCC, then we call $G$ \textbf{strongly connected}.
Equivalently, $G$ is strongly connected if and only if for all $i,j \in V$, there exists a walk in $G$ from $i$ to $j$.
Let $W$ be an SCC of $G$.
Clearly, the induced digraph $G \vert_W$ is strongly connected.
We call $W$ a \textbf{sink SCC} if for all $(i,j) \in E$, $i \in W \implies j \in W$.
In words, $W$ is a sink if there is no arc with its initial vertex in $W$ and its terminal vertex outside of $W$.
For example, any vertex that has outdegree zero forms a sink SCC.

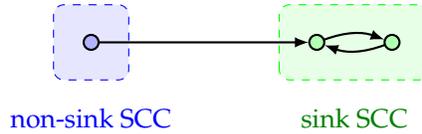
\begin{figure}[htbp]
    \centering
    \begin{tikzpicture}
    \draw[rounded corners, fill=blue!10, draw=blue, dashed] (-0.5, -0.5) rectangle (0.5, 0.5);
    \node[circle, fill=blue!30, draw=black, thick, scale=0.6] (a) at (0, 0) {};
    
    \draw[rounded corners, fill=green!10, draw=green, dashed] (2.5, -0.5) rectangle (4.5, 0.5);
    \node[circle, fill=green!30, draw=black, thick, scale=0.6] (b) at (3, 0) {};
    \node[circle, fill=green!30, draw=black, thick, scale=0.6] (c) at (4, 0) {};
    
    \draw[-latex, thick] (a) -- (b);
    \draw[-latex, thick] (b) to [bend left=20] (c);
    \draw[-latex, thick] (c) to [bend left=20] (b);
    
    \node[blue] at (0, -1) {non-sink SCC};
    \node[green!50!black] at (3.5, -1) {sink SCC};
    \end{tikzpicture}
    \caption{A sink SCC has no arc leaving it.}
\end{figure}

The SCCs partition the vertex set of $G$ and they also form the \emph{condensation digraph} of $G$.
The \textbf{condensation} of $G$ is the directed graph $G^\circ = (V^\circ, E^\circ)$ where 
\begin{itemize}
    \item $V^\circ$ is the set of all SCCs of $G$.
    \item For all distinct $W_1, W_2 \in V^\circ$, we have $(W_1, W_2) \in E^\circ$ if there exists $w_1 \in W_1$ and $w_2 \in W_2$ such that $(w_1, w_2) \in E$.
\end{itemize}
Observe that if $(W_1, W_2) \in E^\circ$ then $(W_2, W_1) \notin E^\circ$.
As an example, the digraph in \cite[Figure 2]{DPRRW24} is the condensation of the digraph in \cite[Figure 1]{DPRRW24}.
Additionally, the green vertex in \cite[Figure 1]{DPRRW24} forms the only sink SCC of the digraph.
In Figure~\ref{fig:condensation_ex}, we give an example of a digraph $G$ and its condensation $G^\circ$.
The vertices of $G^\circ$ are the three SCCs of $G$, and $\{3,4\}$ is the only sink SCC of $G$.
The condensation digraph is a DAG.
Algorithmically, finding SCCs can be efficiently done with linear time complexity.
Some well-known algorithms are due to Tarjan~\cite{Tarjan72}, Dijkstra~\cite[Chapter 25]{Dijkstra76} (with many variants), and Kosaraju--Sharir~\cite{CLRS09}.

\begin{figure}[htbp]
    \centering
    \begin{minipage}{.3\textwidth}
        \begin{tikzpicture}
        \node[circle, fill=violet, draw=black, thick, scale=0.8, label=below:1] (v1) at (0, 0) {};
        \node[circle, fill=lightgray, draw=black, thick, scale=0.8, label=below:2] (v2) at (-1.5, -1.5) {};
        \node[circle, fill=green, draw=black, thick, scale=0.8, label=left:3] (v3) at (-3, 0) {};
        \node[circle, fill=green, draw=black, thick, scale=0.8, label=right:4] (v4) at (-1.5, 1.5) {};
        \node at (0,-1.7) {\large $G$};

        \draw[-latex, thick] (v2) -- (v1);
        \draw[-latex, thick] (v1) -- (v3);
        \draw[-latex, thick] (v4) to [bend left=15] (v3);
        \draw[-latex, thick] (v3) to [bend left=25] (v4);
        \draw[-latex, thick] (v2) -- (v3);
        \draw[-latex, thick] (v1) -- (v4);
        \draw[-latex, thick] (v4) to [out=130,in=50, loop, style={min distance=8mm}] (v4);
        \draw[-latex, thick] (v1) to [out=-40,in=40, loop, style={min distance=8mm}] (v1);
        \end{tikzpicture}
    \end{minipage}
    \hspace{2.8 cm}
    \begin{minipage}{.25\textwidth}
        \vspace{0.6 cm}
        \begin{tikzpicture}
        \node[circle, fill=violet, draw=black, thick, scale=0.8, label=above:\text{\{1\}}] (v1) at (0, 0) {};
        \node[circle, fill=lightgray, draw=black, thick, scale=0.8, label=below:\text{\{2\}}] (v2) at (-1.5, -1.5) {};
        \node[circle, fill=green, draw=black, thick, scale=0.8, label=above:\text{\{3,4\}}] (v3) at (-3, 0) {};
        \node at (0,-2.5) {\large $G^\circ$};

        \draw[-latex, thick] (v2) -- (v1);
        \draw[-latex, thick] (v1) -- (v3);
        \draw[-latex, thick] (v2) -- (v3);
        \end{tikzpicture}
    \end{minipage}
    \caption{Example of a digraph $G$ and its condensation $G^\circ$.}
    \label{fig:condensation_ex}
\end{figure}
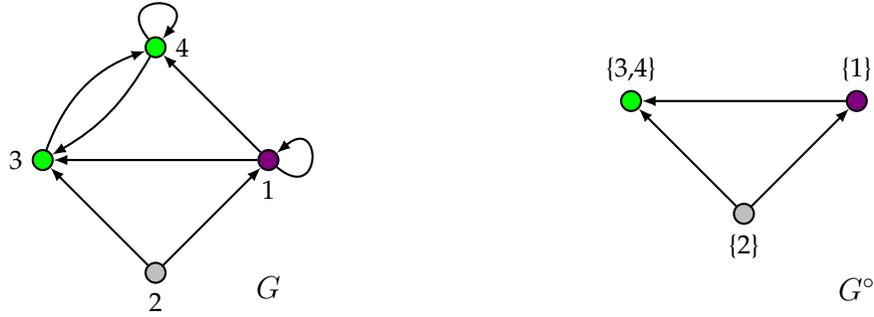

Let $G$ be a strongly-connected digraph that contains at least one cycle.
The \textbf{period} $p$ of $G$ is the greatest common divisor of the lengths of all cycles in $G$.
If $p=1$ then we call $G$ \textbf{aperiodic}.
For example, if $G$ contains a self-loop then $G$ is aperiodic.
If $G$ is strongly connected that is also a DAG, then $G$ is just a single vertex without a self-loop.
See \cite[Section~13.3]{JS99} or \cite{Pascoletti92} for some efficient algorithms to compute the period of a strongly-connected digraph without having to find all of its cycles.
We caution the readers that for general digraphs, including those that are not strongly connected, there are some variations in defining periodicity or aperiodicity.
For example, see \cite{KR26} for ``Boolean" periodicity, and see \cite[Definition~2]{golub2010naive} for ``global" aperiodicity.
There should be no confusion when dealing with strongly-connected digraphs.
Next, we call a digraph $G$ \textbf{condensely aperiodic} if for all sink SCC $W$ of $G$, the induced digraph $G \vert_W$ is aperiodic or $G \vert_W$ is a single vertex without a self-loop.
For example, see \Cref{fig:condaper}.

\begin{figure}[htbp]
    \centering
    \begin{minipage}{.31\textwidth}
        \centering
        \begin{tikzpicture}[scale=0.9]
        \node[circle, fill=white, draw=black, thick, scale=0.7, label=above:$1$] (a) at (0, 1.2) {};
        \node[circle, fill=white, draw=black, thick, scale=0.7, label=right:$2$] (b) at (1.2, 0) {};
        \node[circle, fill=white, draw=black, thick, scale=0.7, label=below:$3$] (c) at (0, -1.2) {};
        \node[circle, fill=white, draw=black, thick, scale=0.7, label=left:$4$] (d) at (-1.2, 0) {};
        
        \draw[-latex, thick] (a) -- (b);
        \draw[-latex, thick] (b) -- (c);
        \draw[-latex, thick] (c) -- (d);
        \draw[-latex, thick] (d) -- (a);
        \end{tikzpicture}
        \caption*{The only cycle here is of length 4.}
    \end{minipage}
    \hspace{1cm}
    \begin{minipage}{.25\textwidth}
        \centering
        \begin{tikzpicture}[scale=0.9]
        \node[circle, fill=white, draw=black, thick, scale=0.7, label=above:$1$] (a) at (0, 1.2) {};
        \node[circle, fill=white, draw=black, thick, scale=0.7, label=right:$2$] (b) at (1.2, 0) {};
        \node[circle, fill=white, draw=black, thick, scale=0.7, label=left:$3$] (c) at (-1.2, 0) {};
        
        \draw[-latex, thick, black] (a) to [bend left=20] (b);
        \draw[-latex, thick, black] (b) to [bend left=20] (a);
        
        \draw[-latex, thick, black] (a) -- (c);
        \draw[-latex, thick, black] (c) -- (b);
        \end{tikzpicture}
        \caption*{There is a cycle of length 2 and another one of length 3.}
    \end{minipage}
    \caption{The strongly-connected digraph on the left has period 4, while the strongly-connected digraph on the right is aperiodic.}
\end{figure}

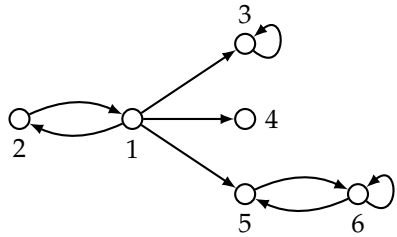
\begin{figure}[htbp]
    \centering
    \begin{tikzpicture}[baseline=0]
        \node[circle, fill=white, draw=black, thick, scale=0.8, label=below:1] (v1) at (0, 0) {};
        \node[circle, fill=white, draw=black, thick, scale=0.8, label=below:2] (v2) at (-3/2, 0) {};
        \node[circle, fill=white, draw=black, thick, scale=0.8, label=above:3] (v3) at (3/2, 1) {};
        \node[circle, fill=white, draw=black, thick, scale=0.8, label=right:4] (v4) at (3/2, 0) {};
        \node[circle, fill=white, draw=black, thick, scale=0.8, label=below:5] (v5) at (3/2, -1) {};
        \node[circle, fill=white, draw=black, thick, scale=0.8, label=below:6] (v6) at (3, -1) {};

        \draw[-latex, thick] (v1) -- (v3);
        \draw[-latex, thick] (v1) -- (v4);
        \draw[-latex, thick] (v1) -- (v5);
        \draw[-latex, thick] (v1) to [bend left=25] (v2);
        \draw[-latex, thick] (v2) to [bend left=25] (v1);
        \draw[-latex, thick] (v5) to [bend left=25] (v6);
        \draw[-latex, thick] (v6) to [bend left=25] (v5);
        \draw[-latex, thick] (v3) to [out=-40,in=40, loop, style={min distance=6mm}] (v3);
        \draw[-latex, thick] (v6) to [out=-40,in=40, loop, style={min distance=6mm}] (v6);
    \end{tikzpicture}
    \caption{An example of a condensely-aperiodic digraph.}
    \label{fig:condaper}
\end{figure}

Let $M$ be a real square matrix of size $n>0$.
The \textbf{underlying digraph} of $M$ is the digraph $G[M]=(V,E)$ where $V = \{1,2,\dots,n\}$ and $(i,j) \in E$ if and only if $M(i,j) \neq 0$.
We call $M$ \textbf{irreducible} if $G[M]$ is strongly connected.
Otherwise, $M$ is \textbf{reducible}.
An irreducible matrix $M$ is \textbf{aperiodic} if $G[M]$ is aperiodic.
We call $M$ \textbf{condensely aperiodic} if $G[M]$ is condensely aperiodic.

Let $A$ be a row-stochastic matrix.
The behavior of $\displaystyle \lim_{t \to \infty} A^t$ can be broadly categorized into two types: 
\begin{enumerate}
    \item $\displaystyle \lim_{t \to \infty} A^t = M$ where $M$ is a row-stochastic matrix.
    \item $A^t$ asymptotically cycles through a finite number of matrices.
\end{enumerate}
The following theorem is a combinatorial characterization of the existence of $\displaystyle \lim_{t \to \infty} A^t$ based on the underlying digraph of a given row-stochastic $A$.
Note that a row-stochastic matrix $A$ is condensely aperiodic if and only if for all sink SCC $W$ of $G[A]$, the induced digraph $G[A] \, \vert_W$ is aperiodic.

\begin{theorem}[{\cite[Theorem 4]{Rosenblatt57}} or {\cite[Theorem 2]{golub2010naive}}]
\label{thm:aperiodic}
    Let $A$ be a row-stochastic matrix of positive size.
    Then $\displaystyle \lim_{t \to \infty} A^t = M$ where $M$ is row stochastic, if and only if $A$ is condensely aperiodic.
\end{theorem}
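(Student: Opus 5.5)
We prove both directions using the canonical block-triangular (Frobenius normal) form of $A$. Order the SCCs of $G[A]$ compatibly with a topological order of the condensation $G^\circ$, so that sink SCCs come last. Then $A$ becomes block upper-triangular, and each sink SCC $W$ yields a diagonal block $A_W = A|_W$. Since no arc leaves $W$, all row mass of $W$ stays inside $W$, so $A_W$ is itself row stochastic and irreducible. Each non-sink SCC $U$ has a diagonal block $A_U$ that is row substochastic. Moreover, $A_U$ is proper substochastic, because some vertex of $U$ has an arc leaving $U$ (a single vertex without a self-loop gives the zero block). An irreducible proper substochastic matrix has $\rho(A_U)<1$. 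This follows from Perron--Frobenius, or directly: some power of $A_U$ has every row sum $<1$, because every vertex reaches the leaking vertex within $|U|$ steps. By \Cref{thm:zeroconv}, $A_U^t\to 0$.

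\emph{Sufficiency.} Suppose every sink block $A_W$ is aperiodic. The standard Perron--Frobenius theorem for primitive stochastic matrices gives $A_W^t\to \mathbf 1\pi_W^\top$, with $\pi_W$ the stationary vector. Next, lump all transient vertices $T$ (the union of the non-sink SCCs) together. The block $Q=A|_T$ is block triangular with diagonal blocks of spectral radius $<1$, so $\rho(Q)<1$ and $Q^t\to0$. Writing
\[
A=\begin{pmatrix} Q & R\\ 0 & S\end{pmatrix},\qquad S=\diag(A_{W_1},\dots,A_{W_r}),
\]
we get $A^t=\begin{pmatrix} Q^t & \sum_{k=0}^{t-1}Q^{k}RS^{t-1-k}\\ 0 & S^t\end{pmatrix}$. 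Here $S^t\to S^\infty$. The off-diagonal sum converges to $(I-Q)^{-1}RS^\infty$ by a dominated-convergence (Toeplitz) argument, since $\sum_k\|Q^k\|_\infty<\infty$ and $S^{t-1-k}$ is bounded and converges for each fixed $k$. Each $A^t$ is row stochastic, and the set of row-stochastic matrices is closed, so the limit $M$ is row stochastic.

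\emph{Necessity.} Suppose some sink block $A_W$ is irreducible with period $p\ge2$. Then $A^t$ restricted to $W\times W$ equals $A_W^t$. By the cyclic decomposition of $W$ into classes $C_0,\dots,C_{p-1}$, the matrix $A_W^t(i,j)$ vanishes unless $j$ lies in class $i+t \bmod p$. Fix $i$; its row of $A_W^t$ has total mass $1$, carried entirely on the class $C_{(i+t)\bmod p}$, which rotates with $t$. Suppose $A_W^t$ converged. Summing the limit row over each class $C_r$ gives values that must equal both $1$ and $0$ along subsequences, a contradiction. Hence $\lim A^t$ does not exist. A sink SCC that is a single vertex without a self-loop cannot occur for stochastic $A$, since every row must carry mass $1$.

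The main obstacle is the convergence of the off-diagonal (transient-to-sink) block. This reduces to $\rho(Q)<1$, and the key step is showing that each non-sink irreducible block leaks mass. I would prove this via the power argument above rather than appeal to Perron--Frobenius, so that the step stays self-contained.
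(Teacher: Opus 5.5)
Your proof is correct in both directions. The paper does not prove this statement itself; it quotes it from Rosenblatt and Golub--Jackson. The only in-paper comparison is therefore the proof of \Cref{thm:limsubstochastic}. That proof gives sufficiency (for substochastic $A$) from the same block form \eqref{eq:blockform}, but finishes spectrally:
non-sink blocks are zero-convergent by a cited lemma of Varga;
primitive stochastic sink blocks have $1$ as a simple eigenvalue with all other eigenvalues in the open unit disk;
so $1$ is the only unimodular eigenvalue of $A$, which is asserted to be semisimple, and a cited criterion for convergence of matrix powers is applied.

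You instead compute $A^t$ blockwise and pass to the limit through $\sum_k\norm{Q^k}_\infty<\infty$ and a Toeplitz argument, proving $\rho(Q)<1$ by your leakage argument rather than citing it. This buys a self-contained proof and an explicit limit $M=\begin{pmatrix}0 & (I-Q)^{-1}RS^\infty\\ 0 & S^\infty\end{pmatrix}$. It also makes transparent the step the paper states without argument. Semisimplicity of $1$ for the full block-triangular matrix is not automatic from the diagonal blocks. It follows from $1\notin\spec(Q)$ together with $S$ being block diagonal with $1$ simple in each stochastic sink block, which is exactly the structure your computation exploits.

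The spectral route is shorter given the citations and handles proper-substochastic sink blocks uniformly. Your block argument would extend to that setting too, since it only needs $Q^t\to0$ and convergence of $S^t$.

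Your necessity argument via cyclic classes has no in-paper counterpart. It correctly relies on each row of $W$ carrying mass exactly $1$, which is why the converse fails for substochastic matrices (the paper's $B_1$).

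One minor point: the observation that a sink SCC of a stochastic matrix cannot be a lone vertex without a self-loop is what reduces ``condensely aperiodic'' to ``every sink block is aperiodic''. That reduction is used in the sufficiency direction, so the remark belongs there rather than in the necessity paragraph.
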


For row-substochastic matrices, let us consider the next example.
Let
\[
B_1 = \begin{pmatrix}
    0 & 1 \\
    1/2 & 0
\end{pmatrix} \; \text{and} \; B_2 = \begin{pmatrix}
    0 & 1 \\
    1 & 0
\end{pmatrix}.
\]
It is easy to check that the underlying digraphs $G[B_1]$ and $G[B_2]$ are identical.
However, we have that $B_1$ is zero-convergent, while $B_2^t$ periodically equals to $B_2$ or $I_2$.
Despite this, we can state the next theorem.

\begin{theorem} \label{thm:limsubstochastic}
    Let $A$ be a row-substochastic matrix of positive size.
    If $A$ is condensely aperiodic then $\displaystyle \lim_{t \to \infty} A^t = M$ where $M$ is row substochastic.
\end{theorem}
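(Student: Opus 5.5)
The plan is to reduce the statement to \Cref{thm:aperiodic} by embedding $A$ into a row-stochastic matrix with one extra absorbing state. Let $A$ be row substochastic of size $n$, and let $\mathbf{d} = \mathbf{1}_n - A\mathbf{1}_n \geq 0$ be its row-deficit vector. Define the row-stochastic matrix of size $n+1$
\[
\hat A = \begin{pmatrix} 1 & 0 \\ \mathbf{d} & A \end{pmatrix},
\]
where the extra vertex is labelled $0$. Since $\hat A$ is block lower triangular with upper-left block $1$, induction gives
\[
\hat A^t = \begin{pmatrix} 1 & 0 \\ \ast & A^t \end{pmatrix}
\]
for every $t$. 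Hence if $\hat A^t$ converges, then $A^t$ converges to the lower-right block of the limit.

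The main step is to show that $\hat A$ is condensely aperiodic, given that $A$ is.

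\emph{SCCs are preserved.} In $G[\hat A]$ the only arc leaving vertex $0$ is its self-loop. So $0$ lies on no cycle through other vertices, and $\{0\}$ is its own SCC. Arcs among $\{1,\dots,n\}$ are exactly those of $G[A]$. Any walk between two vertices of $\{1,\dots,n\}$ cannot pass through $0$, since it could never leave. Therefore the SCCs of $G[\hat A]$ are $\{0\}$ together with the SCCs of $G[A]$.

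\emph{Sink SCCs are aperiodic.} The SCC $\{0\}$ is a sink with a self-loop, so it is aperiodic. Any other sink SCC $W$ of $G[\hat A]$ has no arcs leaving it. In particular it has no arc into $0$, so $\mathbf{d}(i)=0$ for all $i\in W$. It is also a sink SCC of $G[A]$. By hypothesis, $G[A]\vert_W$ is either aperiodic or a single vertex without a self-loop. The second option is impossible here. Such a vertex $i$ would have $\row_i(A)=0$, hence $\mathbf{d}(i)=1>0$, giving an arc $i\to 0$ and contradicting that $W$ is a sink in $G[\hat A]$. Thus every sink SCC of $G[\hat A]$ is aperiodic, and $\hat A$ is condensely aperiodic.

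With this in hand, \Cref{thm:aperiodic} gives $\hat A^t \to \hat M$ with $\hat M$ row stochastic. Consequently $A^t \to M$, where $M$ is the lower-right $n\times n$ block of $\hat M$. Each $A^t$ is nonnegative with row sums at most $1$, since products of row-substochastic matrices are row substochastic. These properties are closed conditions, so they pass to the limit and $M$ is row substochastic.

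I expect the main obstacle to be the bookkeeping of the sink-SCC correspondence. The points needing care are that adding vertex $0$ does not merge SCCs, and that the degenerate case of a single vertex without a self-loop is always rerouted into the absorbing state. The remaining steps are routine.
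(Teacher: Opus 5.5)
Your proof is correct, and it takes a different route from the paper's. The paper works with $A$ itself. It puts $A$ into the block upper-triangular form \eqref{eq:blockform} along the SCCs of $G[A]$ and notes that the non-sink blocks are irreducible proper substochastic, hence zero-convergent. It then applies Perron--Frobenius to the row-stochastic aperiodic sink blocks to conclude that $1$ is the only eigenvalue of modulus $1$, asserts that this eigenvalue is semisimple, and invokes a general convergence criterion for matrix powers \cite{HP12}. Its final step $M\mathbf{1}\le\mathbf{1}$ is the same as yours.

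You instead adjoin an absorbing state that collects the row deficits, so everything reduces to \Cref{thm:aperiodic}, which the paper already quotes. The remaining work is SCC bookkeeping, which you handle correctly, including the point that a zero-row sink allowed by the definition becomes a transient vertex leaking into $0$. Your route is more elementary and self-contained. It needs no spectral theory, and it bypasses the semisimplicity step that the paper states without detail (that step holds because $\norm{A^t}_\infty\le 1$ for all $t$).

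It also gives more for free. Since $\hat A^t$ is row stochastic, its lower-left block is $\mathbf{1}-A^t\mathbf{1}$, so $\hat A^t$ converges if and only if $A^t$ does. The ``only if'' half of \Cref{thm:aperiodic} then yields an exact criterion: $\lim_{t\to\infty}A^t$ exists if and only if every sink SCC $W$ of $G[A]$ on which $A$ is row stochastic is aperiodic. This explains the paper's $B_1$ versus $B_2$ example, and $\mathbf{1}-M\mathbf{1}$ appears as the vector of absorption probabilities. The paper's approach, in return, exposes the eigenvalue structure of $A$ and the block form \eqref{eq:blockform}, which the paper later points to as an alternative route to \Cref{thm:condenseanchor}.
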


\begin{proof}
    There exists a permutation matrix $P$ such that $PAP^\top$ is equal to the following block matrix:
    \begin{equation} \label{eq:blockform}
        PAP^\top = \begin{pmatrix}
        U & T\\
        0 & S
    \end{pmatrix},
    \end{equation}
    where
    \begin{itemize}
        \item $U$ is an upper-triangular block matrix:
        \[
        U = \begin{pmatrix}
            U_1 & \star & \cdots & \star \\
            0 & U_2 & \cdots & \star \\
            \vdots & \vdots & \ddots & \vdots \\
            0 & 0 & \cdots & U_k
        \end{pmatrix}
        \]
        such that for each $i$, $U_i$ corresponds to a non-sink SCC of $G[A]$.
        Each $U_i$ is irreducible and proper substochastic.
        By \cite[Lemma 2.8]{Varga00}, each $U_i$ is zero-convergent;
        \item $T$ has at least one positive entry (if $T$ is non-empty);
        \item $S$ is a non-empty diagonal block matrix:
        \[
        S = \begin{pmatrix}
            S_1 & 0 & \cdots & 0 \\
            0 & S_2 & \cdots & 0 \\
            \vdots & \vdots & \ddots & \vdots \\
            0 & 0 & \cdots & S_m
        \end{pmatrix}
        \]
        such that for each $i$, $S_i$ corresponds to a sink SCC of $G[A]$.
        Each $S_i$ is irreducible and row substochastic.
        If $S_i$ is proper substochastic then by \cite[Lemma 2.8]{Varga00}, $S_i$ is zero-convergent.
        Otherwise, $\rho(S_i)=1$.
    \end{itemize}
    Suppose that $A$ is condensely aperiodic.
    Then for each $i$, $S_i=0$ or $S_i$ is aperiodic.
    Suppose $S_i$ is aperiodic and thus, primitive.
    Suppose also that $S_i$ is row stochastic, i.e., $\rho(S_i)=1$.
    By the Perron--Frobenius theorem \cite[Theorem 1.1]{seneta2006non}, $S_i$ has $1$ as a simple eigenvalue, and for all eigenvalues $\lambda \neq 1$ of $S_i$, we have $\lvert \lambda \rvert <1$.
    The eigenvalues of $A$ are precisely the eigenvalues of $U_1, U_2, \dots, U_k, S_1, S_2, \dots, S_m$.
    Let $\lambda$ be an eigenvalue of $A$.
    Obviously, $\lvert \lambda \rvert \le 1$.
    Suppose that $\lvert \lambda \rvert = 1$.
    Then at least one of $S_1, S_2, \dots, S_m$ has $\lambda$ as an eigenvalue.
    Suppose that for some $i$, $S_i$ has $\lambda$ as an eigenvalue.
    Since $\lvert \lambda \rvert =1$ then $\rho(S_i)=1$.
    This implies that $S_i$ must be aperiodic and row stochastic.
    Hence, it is clear that $\lambda=1$.
    Furthermore, as an eigenvalue of $A$, $\lambda=1$ is \emph{semisimple}: its algebraic and geometric multiplicities are equal.
    By \cite[Theorem 2.1]{HP12}, we conclude that there exists a matrix $M$ such that $\displaystyle \lim_{t \to \infty} A^t = M$.
    Moreover, observe that all entries of the vector $A^t \mathbf{1}$ is at most $1$, which is denoted as $A^t \mathbf{1} \le \mathbf{1}$.
    Taking limit yields $M \mathbf{1} \le \mathbf{1}$ and thus, $M$ is also row substochastic.
\end{proof}

\section{Time-invariant case and the spectral radius}
\label{sec:timeinvar}

In this section, we consider the time-invariant case of the dynamics \eqref{eq:average+learn}.
Since $A_t \ms = \ms$, we first rewrite Equation~\eqref{eq:average+learn} as follows:
\begin{equation} \label{eq:A+L_rewrite}
    X_{t+1}-\ms = A_t X_t - A_t \ms -\mathcal{E}_t (X_t-\ms) = (A_t-\mathcal{E}_t)(X_t-\ms)
\end{equation}
In the time-invariant case, we have that $A_t = A$ and $\mathcal{E}_t = \mathcal{E}$ for all integers $t \geq 0$.
We fix the notation $A$ and $\mathcal{E}$ for the rest of Section~\ref{sec:timeinvar}, unless stated otherwise.
Next, Equation~\eqref{eq:A+L_rewrite} becomes 
\[
X_{t+1}-\ms = (A-\mathcal{E})(X_t-\ms).
\]
Furthermore, for all integers $t \geq 0$, we have
\begin{equation} \label{eq:A+L_rewrite_constant}
    X_t-\bar{\sigma} = (A-\mathcal{E})^t (X_0-\bar{\sigma}).
\end{equation}
All agents eventually reach consensus means that:
\[
\lim_{t \to \infty} X_t = \ms.
\]
In the next proposition, if $A-\mathcal{E}$ is zero-convergent then $\displaystyle \lim_{t \to \infty} X_t = \ms$, regardless of the initial condition $X_0-\ms$.

\begin{proposition} \label{prop:conv_implies_consensus}
    Suppose that Equation~\eqref{eq:A+L_rewrite_constant} holds for all integers $t \geq 0$.
    If $A-\mathcal{E}$ is zero-convergent then $\displaystyle \lim_{t \to \infty} X_t = \ms$.
\end{proposition}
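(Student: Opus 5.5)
The plan is to argue directly from the closed form \eqref{eq:A+L_rewrite_constant}, which expresses the error $X_t - \ms$ as the fixed vector $X_0 - \ms$ multiplied by the matrix power $(A-\mathcal{E})^t$. Here $\ms$ denotes the constant vector $\bar\sigma\mathbf{1}$, which is the reading under which $A\ms = \ms$ was used to derive \eqref{eq:A+L_rewrite}. By hypothesis $A-\mathcal{E}$ is zero-convergent, meaning $(A-\mathcal{E})^t \to 0$ entrywise as $t\to\infty$. The whole proof therefore reduces to passing this matrix limit through the multiplication by a fixed vector.

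Concretely, I would fix any vector norm, say $\norm{\cdot}_\infty$, and use the induced matrix norm to bound
\[
\norm{X_t - \ms}_\infty = \norm{(A-\mathcal{E})^t (X_0-\ms)}_\infty \le \norm{(A-\mathcal{E})^t}_\infty \, \norm{X_0 - \ms}_\infty .
\]
Since $(A-\mathcal{E})^t \to 0$ entrywise and the dimension $n$ is finite, the row-sum formula for $\norm{\cdot}_\infty$ recalled in \Cref{subsec:noteprelim} shows $\norm{(A-\mathcal{E})^t}_\infty \to 0$. It follows that $\norm{X_t-\ms}_\infty \to 0$, i.e.\ $X_t \to \ms$, and this holds for every initial condition $X_0$.

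As an alternative route, one could invoke \Cref{thm:zeroconv} to get $\rho(A-\mathcal{E})<1$, and then obtain a geometric rate by choosing a norm in which $A-\mathcal{E}$ is a strict contraction. That is unnecessary for the qualitative statement, however, and I would use the direct limit argument above. There is no genuine obstacle: the only point needing care is the elementary fact that entrywise convergence of matrices implies convergence in the induced norm, which holds because the matrices have fixed finite size.
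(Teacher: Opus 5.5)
Your proof is correct and takes the same route as the paper, which simply passes the limit $(A-\mathcal{E})^t \to 0$ through the product with the fixed vector $X_0-\ms$ in \eqref{eq:A+L_rewrite_constant}. Your bound $\norm{(A-\mathcal{E})^t(X_0-\ms)}_\infty \le \norm{(A-\mathcal{E})^t}_\infty \norm{X_0-\ms}_\infty$ just makes that continuity step explicit.
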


\begin{proof}
    Suppose that $A-\mathcal{E}$ is zero-convergent, i.e.,
    \[
    \lim_{t \to \infty} (A-\mathcal{E})^t = 0.
    \]
    Since $\displaystyle \lim_{t \to \infty} (A-\mathcal{E})^t$ exists, we conclude that
    \[
    \lim_{t \to \infty} (X_t-\ms) = (X_0-\ms) \lim_{t \to \infty} (A-\mathcal{E})^t = 0 \implies \lim_{t \to \infty} X_t = \ms. \tag*{\qedhere}
    \]
\end{proof}

By Theorem~\ref{thm:zeroconv}, $A-\mathcal{E}$ is zero-convergent if and only if $\rho(A-\mathcal{E}) < 1$.
Even if $\rho(A-\mathcal{E}) \geq 1$, it is possible that consensus can still be eventually reached, depending on $X_0-\ms$.
An example is if $\displaystyle \lim_{t \to \infty} (A-\mathcal{E})^t = M$, where $M \neq 0$ is singular.
Our main focus in this section is to derive various conditions for the entries of $A$ and $\mathcal{E}$ such that $\rho(A-\mathcal{E}) < 1$.

Next, we recall the following result from \cite{popescu2023averaging} that serves as our starting point in this section.
For the readers' convenience, we provide a simplified proof based on the spectral radius and the matrix $\infty$-norm.

\begin{proposition}[{\cite[Proposition~3.1]{popescu2023averaging}}]
\label{prop:positivelearn}
	If $0<\mathcal{E}(i)<2 A(i,i)$ for all $i \in \{1,2,\dots,n\}$, then all agents reach the same consensus value, i.e.,
	\[\lim_{t \to \infty} X_t = \ms.\]
\end{proposition}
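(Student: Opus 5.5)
We bound the $\infty$-norm of $A-\mathcal{E}$ row by row and then combine Theorem~\ref{thm:zeroconv} with Proposition~\ref{prop:conv_implies_consensus}. By Equation~\eqref{eq:A+L_rewrite_constant} it suffices to show that $A-\mathcal{E}$ is zero-convergent, and for that it suffices to show $\rho(A-\mathcal{E})<1$. Since $\rho(M)\le\norm{M}_\infty$, the plan is to show $\norm{A-\mathcal{E}}_\infty<1$ directly from the entrywise hypotheses. Implicitly we use $n$ finite, so the maximum of finitely many row sums each strictly below $1$ is strictly below $1$.

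Fix a row $i$. Since $\mathcal{E}$ is diagonal, the off-diagonal entries of $\row_i(A-\mathcal{E})$ are $A(i,j)\ge 0$ for $j\neq i$, and the diagonal entry is $A(i,i)-\mathcal{E}(i)$. Using row-stochasticity, the absolute row sum is
\[
\sum_{j\neq i} A(i,j) + \lvert A(i,i)-\mathcal{E}(i)\rvert = 1 - A(i,i) + \lvert A(i,i)-\mathcal{E}(i)\rvert .
\]
We then split into two cases.

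If $\mathcal{E}(i)\le A(i,i)$, the row sum equals $1-\mathcal{E}(i)$, which is $<1$ because $\mathcal{E}(i)>0$. If $\mathcal{E}(i)> A(i,i)$, the row sum equals $1-2A(i,i)+\mathcal{E}(i)$, which is $<1$ precisely because $\mathcal{E}(i)<2A(i,i)$. In both cases the $i$-th absolute row sum is strictly less than $1$. Hence $\norm{A-\mathcal{E}}_\infty<1$, so $\rho(A-\mathcal{E})<1$, and Theorem~\ref{thm:zeroconv} together with Proposition~\ref{prop:conv_implies_consensus} yields $X_t\to\ms$.

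There is no real obstacle. The only point requiring care is the second case, where the diagonal entry becomes negative; this is exactly where the upper bound $2A(i,i)$ is needed. Note also that the hypothesis forces $A(i,i)>0$ for every $i$, so every row genuinely benefits from the strict inequality.
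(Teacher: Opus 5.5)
Your proof is correct and follows the paper's argument. You bound $\rho(A-\mathcal{E})$ by $\norm{A-\mathcal{E}}_\infty$, compute each absolute row sum as $1-A(i,i)+\lvert A(i,i)-\mathcal{E}(i)\rvert$, and show it is strictly below $1$; the only cosmetic difference is your case split on $\mathcal{E}(i)\le A(i,i)$ versus $\mathcal{E}(i)>A(i,i)$, where the paper uses the single equivalence $\lvert A(i,i)-\mathcal{E}(i)\rvert<A(i,i)\iff 0<\mathcal{E}(i)<2A(i,i)$.
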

\begin{proof}
We know that $\rho(A-\mathcal{E}) \leq \lVert A-\mathcal{E} \rVert_{\infty}$.
Suppose that $0<\mathcal{E}(i)<2 A(i,i)$ for all $i$ in $\{1,2,\dots,n\}$.
We claim that $\lVert A-\mathcal{E} \rVert_{\infty} < 1$.
Since $A$ is row stochastic, for each row $i \in \{1,2,\dots,n\}$, we have that
\[
\sum_{j=1}^{n} \lvert (A-\mathcal{E})(i,j) \rvert = \lvert A(i,i)-\mathcal{E}(i) \rvert + 1-A(i,i).
\]
We have the following equivalence
\[
\lvert A(i,i)-\mathcal{E}(i) \rvert + 1-A(i,i) < 1 \iff \lvert A(i,i)-\mathcal{E}(i) \rvert < A(i,i) \iff 0 < \mathcal{E}(i) < 2 A(i,i).
\]
It follows that $\displaystyle \sum_{j=1}^{n} \lvert (A-\mathcal{E})(i,j) \rvert < 1$ for all $i \in \{1,2,\dots,n\}$, which means that $\lVert A-\mathcal{E} \rVert_{\infty} < 1$.
Thus, we have $\rho(A-\mathcal{E})<1$, and the conclusion follows from Proposition~\ref{prop:conv_implies_consensus}.
\end{proof}

\subsection{Anchor agents and the condensely-anchored property}

First, we introduce the definition of \emph{anchor} and \emph{defective} agents for general time-invariant case.
\begin{definition} \label{dfn:anchor}
  Let $A$ be a row-stochastic matrix and let $\mathcal{E}$ be a diagonal nonnegative matrix of the same size $n > 0$.
  Let $i \in \{1,2,\dots,n\}$.
  An agent $i$ is called an \textbf{anchor} of $A-\mathcal{E}$ if $0 < \mathcal{E}(i) < 2A(i,i)$.
  Alternatively, the row/index/vertex $i$ is also called an anchor of $A-\mathcal{E}$ if $0 < \mathcal{E}(i) < 2A(i,i)$.
  If $\mathcal{E}(i) = 0$ then an agent $i$ is called \textbf{defective}.
  Consequently, an agent $i$ is a non-anchor if either $i$ is defective or $\mathcal{E}(i) \geq 2A(i,i)$.
\end{definition}

We investigate the dynamics of the interactions between anchor and defective agents where consensus is eventually reached.
Proposition~\ref{prop:positivelearn} asserts that if all agents are anchors, then consensus is eventually reached.
In contrast, we will later on derive conditions for zero-convergence where the number of anchors is as low as one.

The next proposition describes the anchors of $A-\mathcal{E}$, where $A-\mathcal{E}$ is nonnegative.

\begin{proposition} \label{prop:anchorAminE>=0}
    Let $A$ be a row-stochastic matrix of size $n > 0$, and let $\mathcal{E}$ be a diagonal nonnegative matrix of size $n$ such that $A-\mathcal{E} \ge 0$.
    Let $i \in \{1,2,\dots,n\}$.
    The following are equivalent:
    \begin{enumerate}[label=(\arabic*)]
        \item The index $i$ is an anchor of $A-\mathcal{E}$.
        \item $\mathcal{E}(i) > 0$.
        \item The $i$-th row-sum of $A-\mathcal{E}$ is strictly less than $1$.
    \end{enumerate}
\end{proposition}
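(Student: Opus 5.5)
The plan is to prove the cycle of implications $(1)\Rightarrow(2)\Rightarrow(3)\Rightarrow(1)$. Everything follows from two elementary facts. First, $A$ is row stochastic, so the $i$-th row-sum of $A-\mathcal{E}$ equals $1-\mathcal{E}(i)$. Second, the hypothesis $A-\mathcal{E}\ge 0$ applied to the diagonal entry gives $A(i,i)-\mathcal{E}(i)\ge 0$, that is, $\mathcal{E}(i)\le A(i,i)$.

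\textbf{$(1)\Rightarrow(2)$.} This is immediate from Definition~\ref{dfn:anchor}: an anchor satisfies $0<\mathcal{E}(i)$.

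\textbf{$(2)\Leftrightarrow(3)$.} Since $\mathcal{E}$ is diagonal, only the diagonal entry of row $i$ changes. Hence
\[
\sum_{j=1}^n (A-\mathcal{E})(i,j)=\sum_{j=1}^n A(i,j)-\mathcal{E}(i)=1-\mathcal{E}(i).
\]
This is strictly less than $1$ exactly when $\mathcal{E}(i)>0$.

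\textbf{$(2)\Rightarrow(1)$.} This is the only step that uses nonnegativity, and it is the one point needing care. Assume $\mathcal{E}(i)>0$. From $\mathcal{E}(i)\le A(i,i)$ we get $A(i,i)\ge\mathcal{E}(i)>0$. Therefore $\mathcal{E}(i)\le A(i,i)<2A(i,i)$. The strict inequality holds because $A(i,i)>0$, which guarantees $A(i,i)<2A(i,i)$. Combined with $\mathcal{E}(i)>0$, this gives $0<\mathcal{E}(i)<2A(i,i)$, so $i$ is an anchor.

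This closes the cycle and completes the proof. There is no real obstacle; the only subtlety is remembering that $A(i,i)>0$ must be derived, not assumed, before passing from $\le A(i,i)$ to $<2A(i,i)$.
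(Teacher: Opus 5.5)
Your proof is correct and matches the paper's: it also uses $\mathcal{E}(i)\le A(i,i)$ (from $A-\mathcal{E}\ge 0$) to get $(1)\Leftrightarrow(2)$, and the row-sum identity $1-\mathcal{E}(i)$ to get $(2)\Leftrightarrow(3)$. The only difference is that you write out the step $A(i,i)\ge\mathcal{E}(i)>0\Rightarrow\mathcal{E}(i)<2A(i,i)$, which the paper leaves implicit in ``directly from Definition~\ref{dfn:anchor}.''
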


\begin{proof}
    First, note that $0 \leq \mathcal{E}(i) \leq A(i, i)$ for each $i \in \{1,2, \dots, n\}$.
    We have $(1) \iff (2)$ directly from Definition~\ref{dfn:anchor}.
    Next, observe that the $i$-th row-sum of $A-\mathcal{E}$ is equal to $1-\mathcal{E}(i)$.
    Since $\mathcal{E}(i) > 0$ is equivalent to $1-\mathcal{E}(i)<1$, we obtain $(2) \iff (3)$.
\end{proof}

The following corollary identifies the anchors of any given row-substochastic matrix.

\begin{corollary} \label{cor:anchorsubstoch}
    Let $B$ be a row-substochastic matrix of size $n > 0$.
    A row/index $i \in \{1,2,\dots,n\}$ is an anchor of $B$ if and only if the $i$-th row-sum of $B$ is strictly less than $1$.
\end{corollary}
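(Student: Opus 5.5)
The plan is to reduce the corollary to Proposition~\ref{prop:anchorAminE>=0} by writing $B$ in the form $A-\mathcal{E}$ with $A$ row stochastic and $A-\mathcal{E}=B\ge 0$. For a row-substochastic $B$ of size $n$, let $r_i = \row_i(B)\,\mathbf{1}_n \le 1$ be the $i$-th row-sum. Set $\mathcal{E} = \diag(1-r_1, \dots, 1-r_n)$; this is diagonal and nonnegative because $r_i\le 1$. Then set $A = B + \mathcal{E}$.

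First, $A$ is row stochastic. Its entries are nonnegative, since $B\ge 0$ and $\mathcal{E}\ge 0$. Its $i$-th row-sum is $r_i + (1-r_i) = 1$. Also $A-\mathcal{E} = B \ge 0$, so the hypotheses of Proposition~\ref{prop:anchorAminE>=0} hold. That proposition gives the equivalence of (1) and (3): $i$ is an anchor of $A-\mathcal{E}=B$ if and only if the $i$-th row-sum of $B$ is strictly less than $1$. This is exactly the claim.

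The only subtle point is that Definition~\ref{dfn:anchor} defines ``anchor of $A-\mathcal{E}$'' in terms of the pair $(A,\mathcal{E})$, not the matrix $B$ alone. So we must check that the notion is well defined for $B$, meaning it does not depend on how $B$ is decomposed. Suppose $B = A'-\mathcal{E}'$ with $A'$ row stochastic and $\mathcal{E}'$ diagonal. Taking row-sums gives $r_i = 1-\mathcal{E}'(i)$, so $\mathcal{E}'=\mathcal{E}$ is forced, and then $A' = A$. Hence the decomposition is unique and ``anchor of $B$'' is unambiguous. This uniqueness check is the only step that needs any care; the rest is a direct application of the earlier proposition.
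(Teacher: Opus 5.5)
Your proposal is correct and matches the paper's proof: the paper also writes $B$ uniquely as $A-\mathcal{E}$ with $A$ row stochastic and $\mathcal{E}$ diagonal nonnegative, then applies $(1)\iff(3)$ of Proposition~\ref{prop:anchorAminE>=0}. The only difference is that you construct $\mathcal{E}=\diag(1-r_1,\dots,1-r_n)$ explicitly and verify uniqueness, which the paper simply asserts.
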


\begin{proof}
    Observe that there exists a unique pair of row-stochastic matrix $A$ and diagonal nonnegative matrix $\mathcal{E}$, both of size $n$, such that $B=A-\mathcal{E}$.
    The conclusion then immediately follows from $(1) \iff (3)$ in Proposition~\ref{prop:anchorAminE>=0}.
\end{proof}

By \Cref{cor:anchorsubstoch}, it is clear that any row-stochastic matrix $A$ does not have any anchor.
Outside the context of this paper, \Cref{cor:anchorsubstoch} can instead be used as the starting point to define anchors for row-substochastic matrices.

We now define \emph{condensely anchored} as follows:
\begin{definition} \label{dfn:condenseanchor}
    Let $A$ be a row-stochastic matrix and let $\mathcal{E}$ be a diagonal nonnegative matrix of the same size.
    We call $A-\mathcal{E}$ \textbf{condensely anchored} if every sink SCC of $G[A-\mathcal{E}]$ contains an anchor of $A-\mathcal{E}$.
\end{definition}

Note that Definition~\ref{dfn:condenseanchor} above parallels that of condensely aperiodic.
Clearly, if each row of $A-\mathcal{E}$ is an anchor, then $A-\mathcal{E}$ is condensely anchored.
On the other hand, if $A-\mathcal{E}$ does not have any anchor, then $A-\mathcal{E}$ is not condensely anchored.
For example, the row-stochastic matrix $A$, if written as $A-0$, is not condensely anchored.
Hence, this emphasizes that the definition of condensely anchored applies to $A-\mathcal{E}$ instead of $A$.

\begin{proposition} \label{prop:condanchor_sinkSCC}
    Let $A$ be a row-stochastic matrix and let $\mathcal{E}$ be a diagonal nonnegative matrix of the same size $n > 0$.
    Then $A-\mathcal{E}$ is condensely anchored if and only if for all $i \in \{1,2,\dots,n\}$, there exists a walk in the digraph $G[A-\mathcal{E}]$ from vertex $i$ to some anchor vertex of $A-\mathcal{E}$.
\end{proposition}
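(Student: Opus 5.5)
This is a purely graph-theoretic statement about $G = G[A-\mathcal{E}]$, so the plan is to argue on the condensation $G^\circ$. The only structural fact needed is the one recorded in Section~\ref{subsec:digraphstoch}: $G^\circ$ is a DAG. It follows that from every SCC there is a walk in $G^\circ$ to some sink SCC. To see this, start at the SCC of $i$; whenever the current SCC is not a sink, it has an outgoing arc of $G^\circ$, so follow it. Because $G^\circ$ is finite and acyclic, this process never revisits an SCC and must stop, and it can only stop at a sink SCC.

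For the forward direction, assume $A-\mathcal{E}$ is condensely anchored and fix a vertex $i$. Take the walk in $G^\circ$ from the SCC of $i$ to a sink SCC $W$ and lift it to a walk in $G$. Each arc $(W_1,W_2)$ of $G^\circ$ is realised by some arc $(w_1,w_2)$ of $G$. Inside a single SCC any two vertices are joined by a walk, by the definition of strong connectivity. Concatenating these pieces gives a walk from $i$ to some vertex of $W$. By hypothesis $W$ contains an anchor $a$, and strong connectivity of $W$ extends the walk to $a$. If $W=\{a\}$, the walk simply ends there, and a walk of length $0$ is allowed.

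For the converse, assume every vertex reaches an anchor and let $W$ be a sink SCC. Pick any $w\in W$ and a walk from $w$ to an anchor $a$. The sink property says that every arc starting in $W$ ends in $W$. By induction along the walk, every vertex of the walk lies in $W$, so $a\in W$. Hence $W$ contains an anchor, and $A-\mathcal{E}$ is condensely anchored.

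The only step needing care is the lifting of a condensation walk to a walk in $G$, together with the existence of a sink SCC reachable from every SCC. Both rest on the DAG property and finiteness; once they are spelled out, the rest is immediate. No matrix or spectral facts are needed.
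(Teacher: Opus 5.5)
Your proof is correct and follows essentially the same route as the paper: the forward direction walks from $i$ into a sink SCC and then to its anchor, and the converse uses the sink property to keep the walk from $w\in W$ inside $W$. The one difference is that you justify two steps the paper takes for granted, namely that some sink SCC is reachable from every vertex (via acyclicity of the condensation) and that the anchor inside that sink SCC can then be reached (via strong connectivity).
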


\begin{proof}
    Suppose that $A-\mathcal{E}$ is condensely anchored.
    By Definition~\ref{dfn:condenseanchor}, every sink SCC of $G[A-\mathcal{E}]$ contains an anchor of $A-\mathcal{E}$.
    Let $i$ be a vertex of $G[A-\mathcal{E}]$.
    Suppose that $i \in W$ where $W$ is an SCC of $G[A-\mathcal{E}]$.
    Then there is a walk starting at $i$ that will reach vertices in some sink SCC of $G[A-\mathcal{E}]$, possibly even $W$ itself.
    Since any sink SCC of $G[A-\mathcal{E}]$ contains an anchor, we conclude that there is a walk from $i$ to some anchor vertex of $A-\mathcal{E}$.

    Conversely, suppose that for all $i \in \{1,2,\dots,n\}$, there exists a walk in the digraph $G[A-\mathcal{E}]$ from vertex $i$ to some anchor vertex of $A-\mathcal{E}$.
    Let $W$ be a sink SCC of $G[A-\mathcal{E}]$ and let $i \in W$.
    By assumption, there is a walk from $i$ to some anchor vertex $j$ of $A-\mathcal{E}$.
    Since $W$ is a sink, we have that $j \in W$.
    Hence, by Definition~\ref{dfn:condenseanchor}, we conclude that $A-\mathcal{E}$ is condensely anchored.
\end{proof}

\begin{remark}
    Let $G = (V, E)$ be a digraph.
    Instead of involving anchors as defined in \Cref{dfn:anchor}, we can simply fix any subset $U \subseteq V$.
    Hence, we can jointly reformulate both Definition~\ref{dfn:condenseanchor} and Proposition~\ref{prop:condanchor_sinkSCC} as: Every sink SCC of $G$ contains a vertex from $U$ if and only if for all $i \in V$, there exists a walk in $G$ from $i$ to some vertex in $U$.
    This statement is inherently graph-theoretic, as it can be stated independently of the matrices $A$ and $\mathcal{E}$.
\end{remark}

There are two main parts in our approach to study the zero-convergence of $A-\mathcal{E}$.
First, if $A-\mathcal{E} \ge 0$ then $\rho(A-\mathcal{E}) < 1$ precisely when $A-\mathcal{E}$ is condensely anchored.
Although the condition $\rho(A-\mathcal{E}) < 1$ itself is relatively simple, the condensely-anchored property provides more meaningful interpretations in terms of dynamics.
In the second part, we study the case where the matrix $A-\mathcal{E}$ can contain some negative diagonal entries.
Assuming the learning rate of every agent is within the closed anchoring interval, condensely anchored also implies that $\rho(A-\mathcal{E}) < 1$.

\subsection{The case where \texorpdfstring{$A-\mathcal{E}$}{A-E} is nonnegative}
\label{subsec:AminEnonneg}

Let $A-\mathcal{E}$ be a nonnegative matrix.
Note that $0 \leq \mathcal{E}(i) \leq A(i, i)$ for each $i \in \{1,2, \dots, n\}$.
Since the $i$-th row-sum of $A-\mathcal{E}$ is equal to $1-\mathcal{E}(i)$, then $A-\mathcal{E}$ is also row substochastic.
It follows that $\rho(A-\mathcal{E}) \leq 1$.
Our main result in this section is that $\rho(A-\mathcal{E}) < 1$ precisely when $A-\mathcal{E}$ is condensely anchored.

First, we recall the \emph{index of contraction} from \cite{Azimzadeh19}.
Note that in \Cref{prop:condanchor_sinkSCC}, if the vertex $i$ itself is an anchor, then the desired walk, which is of length $0$, automatically exists.
We remark that the definition of walk in \cite[Definition 2.2]{Azimzadeh19} excludes what we regard as walks of length $0$.
Hence, with our notion of walks of length $0$, Definition~\ref{dfn:indexofcon} below is an equivalent reformulation of \cite[(2.1)]{Azimzadeh19}.

\begin{definition} \label{dfn:indexofcon}
    Let $B$ be a row-substochastic matrix of size $n>0$.
    Let $i \in \{1,2,\dots,n\}$ and let $\widehat{W}_i(B)$ be the set of walks $\omega$ in $G[B]$ such that $\omega$ starts at $i$ and ends at some vertex $j$, where the $j$-th row-sum of $B$ is strictly less than $1$.
    We define the \textbf{index of contraction} of $B$ as
    \[
    \widehat{\con}(B) \coloneqq \sup_{1 \leq i \leq n} \left\{ \inf_{\omega \in \widehat{W}_i(B)} \lvert \omega \rvert \right\}
    \]
    with the convention that $\inf \varnothing = \infty$.
\end{definition}

In Definition~\ref{dfn:indexofcon}, for each $i \in \{1,2,\dots,n\}$, we find $\omega \in \widehat{W}_i(B)$ of the least length, which we assign as $\infty$ if $\widehat{W}_i(B) = \varnothing$.
Taking the supremum over these minimum lengths, we obtain the index of contraction of $B$.

\begin{example} \label{ex:indexcon}
    Let
    \[
    B = \begin{pmatrix}
        1/3 & 2/3 & 0 & 0 \\
        0 & 0 & 1 & 0 \\
        3/4 & 0 & 0 & 1/4\\
        0 & 1/5 & 0 & 2/5\\
    \end{pmatrix}
    \]
    be a row-substochastic matrix.
    By \Cref{cor:anchorsubstoch}, note that only row $4$ of $B$ is an anchor.
    There exists a unique pair of row-stochastic matrix
    \[
    A = \begin{pmatrix}
        1/3 & 2/3 & 0 & 0 \\
        0 & 0 & 1 & 0 \\
        3/4 & 0 & 0 & 1/4\\
        0 & 1/5 & 0 & 4/5\\
    \end{pmatrix}
    \]
    and diagonal nonnegative matrix $\mathcal{E} = \diag(0,0,0,2/5)$ such that $B=A-\mathcal{E}$.
    The underlying digraphs $G[A]$ and $G[B]$ are identical, as depicted in \Cref{fig:indexcon}.
    However, we have $\widehat{\con}(A) = \infty$, while 
    \[
    \widehat{\con}(B) = \sup \{3,2,1,0\} = 3.
    \]
    One can check that $\norm{B}_{\infty} = \norm{B^2}_{\infty} = \norm{B^3}_{\infty} = 1$, while $\norm{B^4}_{\infty} < 1$, see \cite[Theorem 2.5]{Azimzadeh19}.
\end{example}

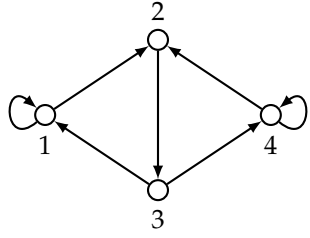
\begin{figure}
    \centering
    \begin{tikzpicture}[baseline=0]
        \node[circle, fill=white, draw=black, thick, scale=0.8, label=below:1] (v1) at (-3/2, 0) {};
        \node[circle, fill=white, draw=black, thick, scale=0.8, label=above:2] (v2) at (0, 1) {};
        \node[circle, fill=white, draw=black, thick, scale=0.8, label=below:3] (v3) at (0, -1) {};
        \node[circle, fill=white, draw=black, thick, scale=0.8, label=below:4] (v4) at (3/2, 0) {};

        \draw[-latex, thick] (v1) -- (v2);
        \draw[-latex, thick] (v2) -- (v3);
        \draw[-latex, thick] (v3) -- (v1);
        \draw[-latex, thick] (v3) -- (v4);
        \draw[-latex, thick] (v4) -- (v2);
        \draw[-latex, thick] (v1) to [out=-140,in=140, loop, style={min distance=6mm}] (v1);
        \draw[-latex, thick] (v4) to [out=-40,in=40, loop, style={min distance=6mm}] (v4);
    \end{tikzpicture}
    \caption{The underlying digraph of both matrices $A$ and $B$ in \Cref{ex:indexcon}.}
    \label{fig:indexcon}
\end{figure}

The next theorem is the main result of this section.

\begin{theorem}
\label{thm:condenseanchor}
    Let $A$ be a row-stochastic matrix of size $n > 0$, and let $\mathcal{E}$ be a diagonal nonnegative matrix of size $n$ such that $A-\mathcal{E} \ge 0$.
    Then, the following are equivalent:
    \begin{enumerate}[label=(\arabic*)]
        \item $A-\mathcal{E}$ is condensely anchored.
        \item For all $i \in \{1,2,\dots,n\}$, there exists a walk in the digraph $G[A-\mathcal{E}]$ from vertex $i$ to some anchor vertex of $A-\mathcal{E}$.
        \item $\widehat{\con}(A-\mathcal{E})$ is finite.
        \item $A-\mathcal{E}$ is zero-convergent.
    \end{enumerate}
\end{theorem}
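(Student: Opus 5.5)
The equivalence $(1)\iff(2)$ is exactly \Cref{prop:condanchor_sinkSCC}, so nothing new is needed there. For $(2)\iff(3)$, recall from \Cref{prop:anchorAminE>=0} that, since $A-\mathcal{E}\ge 0$, a vertex $j$ is an anchor of $A-\mathcal{E}$ precisely when the $j$-th row-sum of $B \coloneqq A-\mathcal{E}$ is strictly less than $1$. Hence the set $\widehat{W}_i(B)$ of \Cref{dfn:indexofcon} is exactly the set of walks in $G[B]$ from $i$ to an anchor. Condition (2) therefore says $\widehat{W}_i(B)\neq\varnothing$ for every $i$. Each infimum is then a finite nonnegative integer, and the supremum over the finitely many $i$ is finite. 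Conversely, if $\widehat{\con}(B)<\infty$, then no infimum equals $\inf\varnothing=\infty$, which gives (2).

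The substantive part is $(3)\iff(4)$, which I would route through \Cref{thm:zeroconv} by showing $\rho(B)<1$. For $(3)\Rightarrow(4)$, set $k=\widehat{\con}(B)$ and show $\norm{B^{k+1}}_\infty<1$. Note that $B^{m}\mathbf 1\le \mathbf 1$ for all $m$, since $B$ is nonnegative and substochastic. Fix $i$ and take a shortest walk $i=i_0\to i_1\to\dots\to i_\ell$ with $\ell\le k$ and $i_\ell$ an anchor. Write $r=\mathbf 1-B\mathbf 1\ge 0$, so $r(i_\ell)>0$. Then
\[
(B^{k+1}\mathbf 1)(i) = (B^{\ell}B\,B^{k-\ell}\mathbf 1)(i)\le (B^{\ell}B\mathbf 1)(i) = (B^\ell\mathbf 1)(i)-(B^\ell r)(i),
\]
and $(B^\ell r)(i)\ge \prod_{s} B(i_s,i_{s+1})\, r(i_\ell)>0$ because every arc of the walk has a positive entry. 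Since $(B^\ell\mathbf 1)(i)\le 1$, every row-sum of $B^{k+1}$ is strictly below $1$. Thus $\rho(B)^{k+1}\le\norm{B^{k+1}}_\infty<1$, and \Cref{thm:zeroconv} gives zero-convergence. This is essentially \cite[Theorem 2.5]{Azimzadeh19}, which could simply be cited.

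For $(4)\Rightarrow(3)$ I would argue by contraposition. Suppose some $i$ has no walk to an anchor, and let $R$ be the set of vertices reachable from $i$, including $i$ itself. No vertex of $R$ is an anchor, so every row of $B$ indexed by $R$ sums to $1$. Moreover, $B(u,v)>0$ with $u\in R$ forces $v\in R$. Hence $\mathbf 1_R$, the indicator vector of $R$, satisfies $B\mathbf 1_R=\mathbf 1_R$. So $1$ is an eigenvalue, $\rho(B)\ge 1$, and $B$ is not zero-convergent by \Cref{thm:zeroconv}.

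I expect the main obstacle to be the strictness argument in $(3)\Rightarrow(4)$. The point that needs care is that walks of different lengths $\ell\le k$ must be padded to the common power $k+1$. The monotonicity $B^{m}\mathbf 1\le\mathbf 1$ handles this cleanly, whereas a naive walk-counting argument would not.
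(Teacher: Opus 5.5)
Your treatment of $(1)\iff(2)$ and $(2)\iff(3)$ matches the paper. Your proof of $(3)\Rightarrow(4)$ via $\norm{B^{k+1}}_\infty<1$ is correct, and it is self-contained where the paper simply cites \cite[Corollary 2.6]{Azimzadeh19} for $(3)\iff(4)$. The step you flagged as delicate is fine. The slip is in $(4)\Rightarrow(3)$, where the claim $B\mathbf 1_R=\mathbf 1_R$ is false in general.

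Closedness of $R$ under outgoing arcs does give $(B\mathbf 1_R)(u)=1$ for $u\in R$. For $u\notin R$, however, $(B\mathbf 1_R)(u)=\sum_{v\in R}B(u,v)$. This is positive as soon as some vertex outside $R$ has an arc into $R$, and nothing rules that out. Concretely, take \Cref{ex:condenseanchor} with $B=A-\mathcal{E}_2$ and $i=1$. Then $R=\{1\}$, yet $B\mathbf 1_R=(1,\tfrac12,0,0,0)^\top\neq\mathbf 1_R$. The actual right eigenvector for the eigenvalue $1$ there is $(1,\tfrac35,\tfrac15,\tfrac15,\tfrac15)^\top$, the first column of $\lim_{t}(A-\mathcal{E}_2)^t$. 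So ``hence $1$ is an eigenvalue'' does not follow from what you wrote.

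The conclusion is true and the repair is short. What you actually have is $B\mathbf 1_R\ge\mathbf 1_R$ entrywise. Since $B\ge 0$, induction gives $B^m\mathbf 1_R\ge\mathbf 1_R\neq 0$ for every $m$, so $B^m\not\to 0$ directly, with no eigenvalue needed.

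Alternatively, list the vertices of $R$ last. Then $B$ is block upper triangular: the block in rows $R$, columns outside $R$ is zero, and the diagonal block $B_{RR}$ is row stochastic. Hence $1\in\spec(B_{RR})\subseteq\spec(B)$. Equivalently, a left eigenvector of $B_{RR}$ for the eigenvalue $1$, extended by zero, is a left eigenvector of $B$. The eigenvector supported on $R$ is a row vector, not the column indicator. With this fix, your argument is a complete, citation-free proof of the theorem, which is what it buys over the paper's version.
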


\begin{proof}
    $(1) \iff (2)$ follows directly from \Cref{prop:condanchor_sinkSCC}.
    By \cite[Corollary 2.6]{Azimzadeh19} or \cite[Theorem A.1.6]{Zabaljauregui21}, we obtain $(3) \iff (4)$.
    We will now show that $(2) \iff (3)$.
    First, we know that $A-\mathcal{E}$ is row substochastic.
    From \Cref{dfn:indexofcon}, we have that $\widehat{W}_i(A-\mathcal{E})$ is the set of walks $\omega$ in $G[A-\mathcal{E}]$ such that $\omega$ starts at $i$ and ends at some vertex $j$, where the $j$-th row-sum of $A-\mathcal{E}$ is strictly less than $1$.
    By Proposition~\ref{prop:anchorAminE>=0}, we also have that a vertex $j$ is an anchor of $A-\mathcal{E}$ if and only if the $j$-th row-sum of $A-\mathcal{E}$ is strictly less than $1$.
    Hence, we obtain
    \[
    \omega \in \widehat{W}_i(A-\mathcal{E}) \iff \omega \text{ starts at } i \text{ and ends at some anchor vertex } j \text{ of } A-\mathcal{E}.
    \]
    It follows that $(2)$ is equivalent to
    \[
    \widehat{W}_i(A-\mathcal{E}) \neq \varnothing \text{ for all } i \in \{1,2,\dots,n\}.
    \]
    By \Cref{dfn:indexofcon}, we deduce that
    \[
    \widehat{W}_i(A-\mathcal{E}) \neq \varnothing \text{ for all } i \in \{1,2,\dots,n\} \iff \widehat{\con}(A-\mathcal{E}) \text{ is finite}.
    \]
    Therefore, $(2) \iff (3)$ is true, which completes the proof.
\end{proof}

\begin{remark}
    Let $B$ be a row-substochastic matrix of size $n>0$.
    We know that there exists a unique pair of row-stochastic matrix $A$ and diagonal nonnegative matrix $\mathcal{E}$, both of size $n$, such that $B=A-\mathcal{E}$.
    Hence, \Cref{thm:condenseanchor} can also be stated in terms of row-substochastic matrix $B$ instead of $A-\mathcal{E}$.
\end{remark}

It follows that a row-substochastic matrix $B$ is zero-convergent if and only if $B$ is condensely anchored.
Compare this equivalence with \Cref{thm:aperiodic}.
An alternative approach to derive this equivalence is by using the block matrix form \eqref{eq:blockform}.
As a corollary and an application of Theorem~\ref{thm:condenseanchor} in a more general framework, we state \Cref{cor:impairedlearner} in Section~\ref{sec:impaired_ave}.
Specifically, the condensely-anchored property is equivalent to the asymptotic decay of proper-substochastic DeGroot dynamics.

In Example~\ref{ex:condenseanchor}, we fix a row-stochastic matrix $A$, and we consider two diagonal nonnegative matrices $\mathcal{E}_1$ and $\mathcal{E}_2$.
For each matrix $A-\mathcal{E}_1$ and $A-\mathcal{E}_2$, there are four defective agents but only a single anchor.

\begin{example}
\label{ex:condenseanchor}
    Suppose that
    \[
    A = 
    \scalebox{0.82}{$\begin{pmatrix}
        1 & 0 & 0 & 0 & 0\\
        1/2 & 0 & 1/2 & 0 & 0\\
        0 & 0 & 0 & 1 & 0\\
        0 & 0 & 0 & 0 & 1\\
        0 & 1/4 & 0 & 0 & 3/4
    \end{pmatrix}.$}    
    \]
    Let $\mathcal{E}_1 = \diag(1/2, 0, 0, 0, 0)$ and $\mathcal{E}_2 = \diag(0, 0, 0, 0, 1/2)$, so we have
    \[
    A-\mathcal{E}_1 = \scalebox{0.82}{$\begin{pmatrix}
        1/2 & 0 & 0 & 0 & 0\\
        1/2 & 0 & 1/2 & 0 & 0\\
        0 & 0 & 0 & 1 & 0\\
        0 & 0 & 0 & 0 & 1\\
        0 & 1/4 & 0 & 0 & 3/4
    \end{pmatrix}$}
    \; \text{and} \;
    A-\mathcal{E}_2 = \scalebox{0.82}{$\begin{pmatrix}
        1 & 0 & 0 & 0 & 0\\
        1/2 & 0 & 1/2 & 0 & 0\\
        0 & 0 & 0 & 1 & 0\\
        0 & 0 & 0 & 0 & 1\\
        0 & 1/4 & 0 & 0 & 1/4
    \end{pmatrix}$}.
    \]
    The digraphs $G[A]$, $G[A-\mathcal{E}_1]$, and $G[A-\mathcal{E}_2]$ are all identical and condensely aperiodic, see \Cref{tab:3digraphs}.
    Moreover, $\{1\}$ is the only sink SCC of each of these digraphs.
    Vertex $1$ of $G[A-\mathcal{E}_1]$ is its sole anchor, while vertex $5$ of $G[A-\mathcal{E}_2]$ is its sole anchor.
    In Table~\ref{tab:3digraphs}, we can see that $A-\mathcal{E}_1$ is condensely anchored, while $A-\mathcal{E}_2$ is not condensely anchored.
    By Theorem~\ref{thm:condenseanchor}, we have $\displaystyle \lim_{t \to \infty} (A-\mathcal{E}_1)^t = 0$, while $A-\mathcal{E}_2$ is not zero-convergent.
    We find that
    \[
    \lim_{t \to \infty} A^t = \scalebox{0.85}{$\begin{pmatrix}
        1 & 0 & 0 & 0 & 0\\
        1 & 0 & 0 & 0 & 0\\
        1 & 0 & 0 & 0 & 0\\
        1 & 0 & 0 & 0 & 0\\
        1 & 0 & 0 & 0 & 0\\
    \end{pmatrix}$}
    \; \text{and} \;
    \lim_{t \to \infty} (A-\mathcal{E}_2)^t = \scalebox{0.85}{$\begin{pmatrix}
        1 & 0 & 0 & 0 & 0\\
        3/5 & 0 & 0 & 0 & 0\\
        1/5 & 0 & 0 & 0 & 0\\
        1/5 & 0 & 0 & 0 & 0\\
        1/5 & 0 & 0 & 0 & 0\\
    \end{pmatrix}$}.
    \]
\end{example}

\begin{table}[htbp]
    \centering
    \begin{adjustbox}{scale=0.80,center}
    \begin{tabular}{|c|c|c|c|c|}
    \hline
    Digraph & Condensation & Anchor & \parbox{2cm}{\centering Condensely anchored} & \parbox{2cm}{\centering Condensely aperiodic} \\ \hline 
       \begin{tikzpicture}[baseline=0]
        \node[circle, fill=white, draw=black, thick, scale=0.8, label=below:1] (v1) at (2, 0) {};
        \node[circle, fill=white, draw=black, thick, scale=0.8, label=below:2] (v2) at (0, 0) {};
        \node[circle, fill=white, draw=black, thick, scale=0.8, label=below:3] (v3) at (-1, -1) {};
        \node[circle, fill=white, draw=black, thick, scale=0.8, label=left:4] (v4) at (-2, 0) {};
        \node[circle, fill=white, draw=black, thick, scale=0.8, label=right:5] (v5) at (-1, 1) {};
        \node at (1, 1.1) {\large $G[A]$};

        \draw[-latex, thick] (v2) -- (v1);
        \draw[-latex, thick] (v2) -- (v3);
        \draw[-latex, thick] (v3) -- (v4);
        \draw[-latex, thick] (v4) -- (v5);
        \draw[-latex, thick] (v5) -- (v2);
        \draw[-latex, thick] (v1) to [out=-40,in=30, loop, style={min distance=6mm}] (v1);
        \draw[-latex, thick] (v5) to [out=130,in=60, loop, style={min distance=6mm}] (v5);
    \end{tikzpicture}
    & 
    \begin{tikzpicture}[baseline=0]
        \node[circle, fill=white, draw=black, thick, scale=0.8, label=below:\text{\{1\}}] (v1) at (2, 0) {};
        \node[circle, fill=white, draw=black, thick, scale=0.8, label=below:\text{\{2, 3, 4, 5\}}] (v2) at (0, 0) {};
        \node at (1, 1.1) {\large $(G[A])^\circ$};

        \draw[-latex, thick] (v2) -- (v1);
    \end{tikzpicture}
    & None & \textcolor{red}{\LARGE{\ding{55}}} & \textcolor{green}{\LARGE{\ding{51}}} \\ 
    \hline
    \begin{tikzpicture}[baseline=0]
        \node[circle, fill=blue, draw=black, thick, scale=0.8, label=below:1] (v1) at (2, 0) {};
        \node[circle, fill=white, draw=black, thick, scale=0.8, label=below:2] (v2) at (0, 0) {};
        \node[circle, fill=white, draw=black, thick, scale=0.8, label=below:3] (v3) at (-1, -1) {};
        \node[circle, fill=white, draw=black, thick, scale=0.8, label=left:4] (v4) at (-2, 0) {};
        \node[circle, fill=white, draw=black, thick, scale=0.8, label=right:5] (v5) at (-1, 1) {};
        \node at (1, 1.1) {\large $G[A-\mathcal{E}_1]$};

        \draw[-latex, thick] (v2) -- (v1);
        \draw[-latex, thick] (v2) -- (v3);
        \draw[-latex, thick] (v3) -- (v4);
        \draw[-latex, thick] (v4) -- (v5);
        \draw[-latex, thick] (v5) -- (v2);
        \draw[-latex, thick] (v1) to [out=-40,in=30, loop, style={min distance=6mm}] (v1);
        \draw[-latex, thick] (v5) to [out=130,in=60, loop, style={min distance=6mm}] (v5);
        \end{tikzpicture}
    & 
    \begin{tikzpicture}[baseline=0]
        \node[circle, fill=blue, draw=black, thick, scale=0.8, label=below:\text{\{1\}}] (v1) at (2, 0) {};
        \node[circle, fill=white, draw=black, thick, scale=0.8, label=below:\text{\{2, 3, 4, 5\}}] (v2) at (0, 0) {};
        \node at (1, 1.1) {\large $(G[A-\mathcal{E}_1])^\circ$};

        \draw[-latex, thick] (v2) -- (v1);
    \end{tikzpicture}
    & \multirow{2}{*}{\vspace{-12mm} Vertex 1}
    & \multirow{2}{*}{\vspace{-12mm} \textcolor{green}{\LARGE{\ding{51}}}} 
    & \multirow{2}{*}{\vspace{-12mm} \textcolor{green}{\LARGE{\ding{51}}}} \\
    \parbox{5cm}{\centering Any blue vertex is an anchor.} & \parbox{3.5cm}{\centering Any blue vertex is an SCC containing an anchor.\vspace{2mm}} & & & \\
    \hline
     \begin{tikzpicture}[baseline=0]
        \node[circle, fill=white, draw=black, thick, scale=0.8, label=below:1] (v1) at (2, 0) {};
        \node[circle, fill=white, draw=black, thick, scale=0.8, label=below:2] (v2) at (0, 0) {};
        \node[circle, fill=white, draw=black, thick, scale=0.8, label=below:3] (v3) at (-1, -1) {};
        \node[circle, fill=white, draw=black, thick, scale=0.8, label=left:4] (v4) at (-2, 0) {};
        \node[circle, fill=blue, draw=black, thick, scale=0.8, label=right:5] (v5) at (-1, 1) {};
        \node at (1, 1.1) {\large $G[A-\mathcal{E}_2]$};

        \draw[-latex, thick] (v2) -- (v1);
        \draw[-latex, thick] (v2) -- (v3);
        \draw[-latex, thick] (v3) -- (v4);
        \draw[-latex, thick] (v4) -- (v5);
        \draw[-latex, thick] (v5) -- (v2);
        \draw[-latex, thick] (v1) to [out=-40,in=30, loop, style={min distance=6mm}] (v1);
        \draw[-latex, thick] (v5) to [out=130,in=60, loop, style={min distance=6mm}] (v5);
    \end{tikzpicture} & 
    \begin{tikzpicture}[baseline=0]
        \node[circle, fill=white, draw=black, thick, scale=0.8, label=below:\text{\{1\}}] (v1) at (2, 0) {};
        \node[circle, fill=blue, draw=black, thick, scale=0.8, label=below:\text{\{2, 3, 4, 5\}}] (v2) at (0, 0) {};
        \node at (1, 1.1) {\large $(G[A-\mathcal{E}_2])^\circ$};

        \draw[-latex, thick] (v2) -- (v1);
    \end{tikzpicture}
    & \multirow{2}{*}{\vspace{-12mm} Vertex 5}
    & \multirow{2}{*}{\vspace{-12mm} \textcolor{red}{\LARGE{\ding{55}}}} 
    & \multirow{2}{*}{\vspace{-12mm} \textcolor{green}{\LARGE{\ding{51}}}} \\
    \parbox{5cm}{\centering Any blue vertex is an anchor.} & \parbox{3.5cm}{\centering Any blue vertex is an SCC containing an anchor.\vspace{2mm}} & & & \\
    \hline
    \end{tabular}
    \end{adjustbox}
    \caption{The digraphs $G[A]$, $G[A-\mathcal{E}_1]$, and $G[A-\mathcal{E}_2]$ from Example~\ref{ex:condenseanchor}.}
    \label{tab:3digraphs}
\end{table}

\subsection{The case where \texorpdfstring{$A-\mathcal{E}$}{A-E} can contain negative diagonal entries}
\label{subsec:AminEnegdiag}

In this section, we consider matrices $A-
\mathcal{E}$ that contain some negative diagonal entries.
Building upon Theorem~\ref{thm:condenseanchor}, our next result still involves the condensely-anchored property.

\begin{theorem}
\label{thm:extendcondenseanchor}
    Let $A$ be a row-stochastic matrix of size $n > 0$ and let $\mathcal{E}$ be a diagonal nonnegative matrix of size $n$.
    Suppose that $0 \leq \mathcal{E}(i) \leq 2A(i,i)$ for all $i \in \{1,2,\dots,n\}$.
    If $A-\mathcal{E}$ is condensely anchored then $A-\mathcal{E}$ is zero-convergent.
\end{theorem}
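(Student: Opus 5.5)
The plan is to reduce to the nonnegative case of \Cref{thm:condenseanchor} by passing to the entrywise absolute value $B \coloneqq \lvert A-\mathcal{E} \rvert$. We then use the comparison $\rho(A-\mathcal{E}) \le \rho(\lvert A-\mathcal{E}\rvert)$ recalled in \Cref{subsec:noteprelim}, together with \Cref{thm:zeroconv}. In outline: show $B$ is row substochastic, that it has the same underlying digraph and the same anchors as $A-\mathcal{E}$, hence is condensely anchored and so zero-convergent.

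\textbf{Step 1: $B$ is row substochastic.} Off the diagonal, $B$ agrees with $A$, since $\mathcal{E}$ is diagonal and $A \ge 0$. On the diagonal, $B(i,i) = \lvert A(i,i)-\mathcal{E}(i)\rvert$. The hypothesis $0 \le \mathcal{E}(i) \le 2A(i,i)$ is exactly $\lvert A(i,i)-\mathcal{E}(i)\rvert \le A(i,i)$. Hence the $i$-th row-sum of $B$ is
\[
1 - A(i,i) + \lvert A(i,i)-\mathcal{E}(i)\rvert \le 1 .
\]

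\textbf{Step 2: $B$ has the same anchors as $A-\mathcal{E}$.} By the equivalence already computed in the proof of \Cref{prop:positivelearn}, the $i$-th row-sum of $B$ is strictly less than $1$ if and only if $0<\mathcal{E}(i)<2A(i,i)$. By \Cref{cor:anchorsubstoch}, the left-hand condition says $i$ is an anchor of $B$; by \Cref{dfn:anchor}, the right-hand condition says $i$ is an anchor of $A-\mathcal{E}$. Rows with $\mathcal{E}(i)=0$ or $\mathcal{E}(i)=2A(i,i)$ have row-sum exactly $1$ and are non-anchors for both matrices, so no boundary case escapes.

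\textbf{Step 3: $B$ has the same digraph as $A-\mathcal{E}$.} Since $B(i,j)=0$ if and only if $(A-\mathcal{E})(i,j)=0$, we get $G[B]=G[A-\mathcal{E}]$. This includes the diagonal case $\mathcal{E}(i)=A(i,i)$, where the self-loop disappears in both graphs. Therefore the sink SCCs of the two digraphs coincide, and each contains an anchor of $B$. Writing $B=A'-\mathcal{E}'$ with $A'$ row stochastic and $\mathcal{E}'$ diagonal nonnegative, as in the remark after \Cref{thm:condenseanchor}, we conclude that $B$ is condensely anchored with $B \ge 0$.

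\textbf{Step 4: conclude.} \Cref{thm:condenseanchor} gives that $B$ is zero-convergent, so $\rho(B)<1$ by \Cref{thm:zeroconv}. Then
\[
\rho(A-\mathcal{E}) \le \rho(\lvert A-\mathcal{E}\rvert) < 1,
\]
and a second application of \Cref{thm:zeroconv} shows $A-\mathcal{E}$ is zero-convergent.

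The only step needing care is the bookkeeping of Steps 2 and 3: verifying that taking absolute values preserves both the zero pattern and the anchor set, particularly at the boundary values $\mathcal{E}(i)\in\{0, A(i,i), 2A(i,i)\}$. Once that is checked, the rest is a direct invocation of earlier results.
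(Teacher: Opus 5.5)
Your proposal is correct and follows the paper's proof essentially step for step. Both pass to $\lvert A-\mathcal{E}\rvert$, check that it is row substochastic with the same underlying digraph and the same anchor set as $A-\mathcal{E}$, and invoke \Cref{thm:condenseanchor}. Both then conclude via $\rho(A-\mathcal{E}) \le \rho(\lvert A-\mathcal{E}\rvert) < 1$ and \Cref{thm:zeroconv}.
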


\begin{proof}
    Suppose that $A-\mathcal{E}$ is condensely anchored and consider the matrix $\lvert A-\mathcal{E} \rvert \geq 0$.
    Note that the matrices $A-\mathcal{E}$ and $\lvert A-\mathcal{E} \rvert$ are almost identical, except possibly their diagonal entries.
    For all $i \in \{1,2,\dots,n\}$, we have that
    \begin{equation*}
        0 \leq \mathcal{E}(i) \leq 2A(i,i) \iff \lvert A(i,i)-\mathcal{E}(i) \rvert \leq A(i,i).
    \end{equation*}
    This implies that each row-sum of $\lvert A-\mathcal{E} \rvert$ is at most $1$, and hence, $\lvert A-\mathcal{E} \rvert$ is row substochastic.
    Then, there exists a row-stochastic matrix $\bar{A}$ and a diagonal nonnegative matrix $\bar{\mathcal{E}}$ such that $\lvert A-\mathcal{E} \rvert = \bar{A}-\bar{\mathcal{E}} \geq 0$.
    If $i \in \{1,2,\dots,n\}$ is an anchor of $A-\mathcal{E}$, then
    \begin{equation*}
        0 < \mathcal{E}(i) < 2A(i,i) \iff \lvert A(i,i)-\mathcal{E}(i) \rvert < A(i,i),
    \end{equation*}
    where $\lvert A(i,i)-\mathcal{E}(i) \rvert < A(i,i)$ means that the $i$-th row-sum of $\lvert A-\mathcal{E} \rvert = \bar{A}-\bar{\mathcal{E}}$ is less than $1$.
    By Proposition~\ref{prop:anchorAminE>=0}, we deduce that $i \in \{1,2,\dots,n\}$ is an anchor of $\bar{A}-\bar{\mathcal{E}}$ if and only if $i$ is an anchor of $A-\mathcal{E}$.
    The digraphs $G[A-\mathcal{E}]$ and $G[\bar{A}-\bar{\mathcal{E}}]$ are also identical.
    Hence, since $A-\mathcal{E}$ is condensely anchored, then $\bar{A}-\bar{\mathcal{E}}$ is also condensely anchored.
    By Theorem~\ref{thm:condenseanchor}, we have that $\bar{A}-\bar{\mathcal{E}}$ is zero-convergent.
    By Theorem~\ref{thm:zeroconv}, we have $\rho(\bar{A}-\bar{\mathcal{E}}) < 1$.
    Since
    \[
    \rho(A-\mathcal{E}) \leq \rho(\lvert A-\mathcal{E} \rvert) = \rho(\bar{A}-\bar{\mathcal{E}}) < 1,
    \]
    then $A-\mathcal{E}$ is zero-convergent, also by Theorem~\ref{thm:zeroconv}.
\end{proof}

The assumption that $0 \leq \mathcal{E}(i) \leq 2A(i,i)$ for all $i \in \{1,2,\dots,n\}$ is crucial for \Cref{thm:extendcondenseanchor}.
For example, let
\begin{equation*}
    A = \begin{pmatrix}
    1/2 & 1/2 \\
    3/4 & 1/4
\end{pmatrix} \; \text{and} \;\, \mathcal{E} = \begin{pmatrix}
        3/4 & 0\\
        0 & 3
    \end{pmatrix} \implies A-\mathcal{E} = \begin{pmatrix}
     -1/4 & 1/2 \\
     3/4 & -11/4
\end{pmatrix}.
\end{equation*}
Since $0 < \mathcal{E}(1) < 2A(1,1)$ and $\mathcal{E}(2) \ge 2A(2,2)$, the first row of $A-\mathcal{E}$ is an anchor, while the second row of $A-\mathcal{E}$ is a non-anchor.
Hence, $A-\mathcal{E}$ is condensely anchored but $\rho(A-\mathcal{E})>1$, and thus, $A-\mathcal{E}$ is not zero-convergent.

Compared with \Cref{thm:condenseanchor}, in \Cref{thm:extendcondenseanchor}, we lose equivalence.
For example, let
\begin{equation} \label{eq:A-E}
    A = \begin{pmatrix}
    3/5 & 2/5 \\
    2/3 & 1/3
\end{pmatrix} \; \text{and} \;\, \mathcal{E} = \begin{pmatrix}
        0 & 0\\
        0 & 2/3
    \end{pmatrix} \implies A-\mathcal{E} = \begin{pmatrix}
    3/5 & 2/5 \\
    2/3 & -1/3
\end{pmatrix}.
\end{equation}
Since $\mathcal{E}(1)=0$ and $\mathcal{E}(2) = 2 A(2,2)$, $A-\mathcal{E}$ has no anchor and thus, it cannot be condensely anchored.
However, we compute that $\rho(A-\mathcal{E}) < 0.83$, which means that $A-\mathcal{E}$ is zero-convergent.

Using \Cref{prop:conv_implies_consensus}, the following corollary of \Cref{thm:extendcondenseanchor} is straightforward.

\begin{corollary} \label{cor:condanchor}
    Let $A$ be a row-stochastic matrix of size $n > 0$ and let $\mathcal{E}$ be a diagonal nonnegative matrix of size $n$.
    Suppose that $0 \leq \mathcal{E}(i) \leq 2A(i,i)$ for all $i \in \{1,2,\dots,n\}$.
    If $A-\mathcal{E}$ is condensely anchored then all agents eventually reach consensus, i.e., $\displaystyle \lim_{t \to \infty} X_t = \ms$.
\end{corollary}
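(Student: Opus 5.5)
The argument is a direct chaining of \Cref{thm:extendcondenseanchor} with \Cref{prop:conv_implies_consensus}. We work in the time-invariant setting of this section, so that $A_t = A$ and $\mathcal{E}_t = \mathcal{E}$ for all $t \geq 0$. Since $A$ is row stochastic, $A\ms = \ms$, so the recursion \eqref{eq:average+learn} rewrites as \eqref{eq:A+L_rewrite}. Iterating gives \eqref{eq:A+L_rewrite_constant}: for every integer $t \geq 0$, $X_t - \ms = (A-\mathcal{E})^t (X_0 - \ms)$. This is exactly the hypothesis needed to apply \Cref{prop:conv_implies_consensus}.

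Next I would verify that the hypotheses of \Cref{thm:extendcondenseanchor} hold verbatim. We have that $A$ is row stochastic, $\mathcal{E}$ is diagonal and nonnegative, $0 \leq \mathcal{E}(i) \leq 2A(i,i)$ for every $i$, and $A-\mathcal{E}$ is condensely anchored. The theorem therefore gives that $A-\mathcal{E}$ is zero-convergent; equivalently, by \Cref{thm:zeroconv}, $\rho(A-\mathcal{E}) < 1$.

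Finally, \Cref{prop:conv_implies_consensus} yields $\lim_{t\to\infty} X_t = \ms$ for every initial condition $X_0$, since $(A-\mathcal{E})^t \to 0$ forces $(A-\mathcal{E})^t(X_0-\ms) \to 0$. There is no genuine obstacle here. The only point to state carefully is that the time-invariance assumption is what makes \eqref{eq:A+L_rewrite_constant} available, and the substantive work (bounding $\rho(A-\mathcal{E})$ by $\rho(\lvert A-\mathcal{E}\rvert)$ and invoking the nonnegative case) has already been done in \Cref{thm:extendcondenseanchor}.
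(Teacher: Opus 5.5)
Your proposal is correct and matches the paper's approach: the paper obtains this corollary exactly by combining \Cref{thm:extendcondenseanchor} (condensely anchored together with $0 \le \mathcal{E}(i) \le 2A(i,i)$ gives zero-convergence of $A-\mathcal{E}$) with \Cref{prop:conv_implies_consensus}. You also correctly note that time-invariance is what makes the identity $X_t-\ms = (A-\mathcal{E})^t(X_0-\ms)$ available.
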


\begin{remark}
    \Cref{cor:condanchor} generalizes \Cref{prop:positivelearn}, which requires all agents to be anchors.
    Consensus can still be reached even if there is only one anchor as long as the matrix $A-\mathcal{E}$ is condensely anchored.
    We can interpret this situation as follows: Suppose that the learning rate of each agent is inside the closed anchoring interval.
    If for every agent, there is a flow of information to any anchor, then consensus will be eventually reached.
    Equivalently, if each strongly communicating group of agents that leaks no information contains an anchor, then consensus will be eventually reached.
\end{remark}

As we have seen above, the matrix $A-\mathcal{E}$ in \eqref{eq:A-E} is not condensely anchored but consensus is still reached eventually.
It is worth pointing out a subtlety for the case $A-\mathcal{E} \ge 0$.
By \Cref{thm:condenseanchor}, the condensely-anchored property is equivalent to consensus reached, regardless of $X_0-\ms$.
However, as previously mentioned, even if
\[
A-\mathcal{E} \ge 0 \text{ is not condensely anchored} \implies A-\mathcal{E} \text{ is not zero-convergent},
\]
we can still have $\lim_{t \to \infty} X_t = \ms$, though it will depend on $X_0-\ms$.
Since our current main interests are predicting consensus independent of $X_0-\ms$ (i.e., $\rho(A-\mathcal{E})<1$), even though \Cref{thm:condenseanchor} classifies condensely anchored for $A-\mathcal{E} \ge 0$, \Cref{cor:condanchor} is more versatile for the context of this paper.

Attempts at applying the mixed operator norm from \Cref{sec:timevarmixed} to the time-invariant case leads to \Cref{prop:highratiolearning} below.
We focus on the possibility that $\rho(A-\mathcal{E})<1$ even if there exists at least an $i$ where $\mathcal{E}(i)$ is far greater than $2A(i,i)$.
In \Cref{prop:highratiolearning}, note that $\mathcal{E}(1) = (n^2-n+1) A(1,1)$.
Though it originates from experimenting with mixed operator norm, \Cref{prop:highratiolearning} can simply be derived with standard tools of linear algebra.

\begin{proposition} \label{prop:highratiolearning}
Assume $n \geq 2$ is an integer.
Let
\begin{equation*}
    A = \begin{pmatrix}
    \frac{1}{n^2} & \frac{n+1}{n^2} \mathbf{1}_{n-1}^\top \\
    \frac{1}{n} \mathbf{1}_{n-1} & \frac{n-1}{n} I_{n-1}
    \end{pmatrix} \; \text{and} \;\; \mathcal{E} = \begin{pmatrix}
        \frac{n^2-n+1}{n^2} & 0\\
        0 & \frac{n-2}{n} I_{n-1}
    \end{pmatrix}.
\end{equation*}
Then $A-\mathcal{E}$ is zero-convergent.
\end{proposition}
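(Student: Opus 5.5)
The plan is to compute $B \coloneqq A-\mathcal{E}$ explicitly, find its spectrum using the symmetry among agents $2,\dots,n$, and then apply \Cref{thm:zeroconv}. Entrywise we get
\[
B = \begin{pmatrix} -\frac{n-1}{n} & \frac{n+1}{n^2}\mathbf{1}_{n-1}^\top \\ \frac{1}{n}\mathbf{1}_{n-1} & \frac{1}{n} I_{n-1} \end{pmatrix}.
\]
The reason is that $\frac{1}{n^2}-\frac{n^2-n+1}{n^2} = -\frac{n-1}{n}$ and $\frac{n-1}{n}-\frac{n-2}{n} = \frac1n$. The $\infty$-norm is useless here, because the first row has absolute sum larger than $1$. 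So I will work with the spectrum directly.

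\textbf{Step 1 (invariant decomposition).} Let $\mathbf{v}\in\R^{n-1}$ with $\mathbf{1}_{n-1}^\top \mathbf{v}=0$. Then $B(0,\mathbf{v}^\top)^\top = \frac1n (0,\mathbf{v}^\top)^\top$. This gives the eigenvalue $\frac1n$ with multiplicity $n-2$, and $\frac1n<1$. The complementary subspace $\mathrm{span}\{e_1, (0,\mathbf{1}_{n-1}^\top)^\top\}$ is $B$-invariant. On the basis $e_1$, $(0,\mathbf{1}_{n-1}^\top)^\top$, the restriction of $B$ is represented by
\[
M = \begin{pmatrix} -\frac{n-1}{n} & \frac{(n+1)(n-1)}{n^2} \\ \frac1n & \frac1n \end{pmatrix}.
\]
The remaining two eigenvalues of $B$ are the eigenvalues of $M$.

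\textbf{Step 2 (the $2\times 2$ block).} We have $\tr M = -\frac{n-2}{n}$ and
\[
\det M = -\frac{n-1}{n^2}-\frac{(n^2-1)}{n^3} = -\frac{(n-1)(2n+1)}{n^3}.
\]
I will use the Schur–Cohn/Jury criterion for the real polynomial $\lambda^2 - (\tr M)\lambda + \det M$. Both roots lie in the open unit disc if and only if $\lvert\det M\rvert<1$ and $\lvert \tr M\rvert < 1+\det M$. The first condition reads $(n-1)(2n+1) < n^3$, that is, $2n^2-n-1<n^3$, which holds for all $n\ge 2$. The second condition reads
\[
\frac{n^3-2n^2}{n^3} < \frac{n^3-2n^2+n+1}{n^3},
\]
which is trivially true. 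Alternatively, since $\det M<0$ the roots are real and of opposite sign, so one can instead check that the characteristic polynomial is positive at $\pm1$.

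\textbf{Step 3 (conclusion).} Every eigenvalue of $B$ is now accounted for: the eigenvalue $\frac1n$ with multiplicity $n-2$, together with the two eigenvalues of $M$. All of them have modulus less than $1$, so $\rho(A-\mathcal{E})<1$. By \Cref{thm:zeroconv}, $A-\mathcal{E}$ is zero-convergent.

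The only delicate point is Step 2, namely getting the determinant and the inequality $\lvert \tr M\rvert<1+\det M$ right for all $n\ge 2$. The margin there is only $\frac{n+1}{n^3}$, so I would double-check the arithmetic, for example at $n=2$, where $M$ has eigenvalues $\pm\sqrt{5/8}$.
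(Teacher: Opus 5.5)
Your proof is correct and takes essentially the same route as the paper: both isolate the eigenvalue $\frac{1}{n}$ with multiplicity $n-2$, reduce the remaining two eigenvalues to a quadratic with trace $-\frac{n-2}{n}$ and determinant $-\frac{(n-1)(2n+1)}{n^3}$, and conclude with \Cref{thm:zeroconv}. The differences are only in execution: the paper gets the reduction from the rank-two matrix $A-\mathcal{E}-I_n/n$ via trace identities and bounds the explicit roots $\frac{1}{n}-\frac{1}{2}\pm\sqrt{\frac{1}{4}+\frac{1}{n}-\frac{1}{n^3}}$, whereas you split $\mathbb{R}^n$ into explicit invariant subspaces and apply the Jury test (your arithmetic, including the margin $\frac{n+1}{n^3}$ and the $n=2$ check, is right).
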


\begin{proof}
    Observe that
    \[
    A-\mathcal{E}-I_n/n = \begin{pmatrix}
    -1 & \frac{n+1}{n^2} \mathbf{1}_{n-1}^\top \\
    \frac{1}{n} \mathbf{1}_{n-1} & 0
    \end{pmatrix}.
    \]
    Note that $A-\mathcal{E}-I_n/n$ has rank $2$, i.e., it has eigenvalue $0$ of multiplicity $n-2$.
    Let $\lambda_1$ and $\lambda_2$ be the remaining two eigenvalues of $A-\mathcal{E}-I_n/n$.
    Since $\tr(A-\mathcal{E}-I_n/n) = -1$ and $\tr((A-\mathcal{E}-I_n/n)^2) = 1 + 2/n - 2/n^3$, we have
    \begin{align*}
        \lambda_1 + \lambda_2 &= -1,\\
        \lambda_1^2 +\lambda_2^2 &= 1 + \frac{2}{n} - \frac{2}{n^3}.
    \end{align*}
    It follows that
    \[
    \{ \lambda_1, \lambda_2 \} = \left\{ -\frac{1}{2} \pm \sqrt{\frac{1}{4} + \frac{1}{n} - \frac{1}{n^3}} \right\}.
    \]
    Thus, the eigenvalues of $A-\mathcal{E}$ are $1/n$ of multiplicity $n-2$, together with
    \[
    \frac{1}{n} - \frac{1}{2} \pm \sqrt{\frac{1}{4} + \frac{1}{n} - \frac{1}{n^3}}.
    \]
    It is clear that $0 < 1/n < 1$ and note that
    \[
    \sqrt{\frac{1}{4} + \frac{1}{n} - \frac{1}{n^3}} < \sqrt{\frac{1}{4} + \frac{1}{n} + \frac{1}{n^2}} = \frac{1}{2}+\frac{1}{n}.
    \]
    Hence, we obtain
    \begin{align*}
        \left\lvert \frac{1}{n} - \frac{1}{2} \pm \sqrt{\frac{1}{4} + \frac{1}{n} - \frac{1}{n^3}} \right\rvert &\le \left\lvert \frac{1}{n} - \frac{1}{2} \right\rvert + \sqrt{\frac{1}{4} + \frac{1}{n} - \frac{1}{n^3}} = \frac{1}{2} - \frac{1}{n} + \sqrt{\frac{1}{4} + \frac{1}{n} - \frac{1}{n^3}} \\
        &< \frac{1}{2}-\frac{1}{n}+\frac{1}{2}+\frac{1}{n} = 1.
    \end{align*}
    Therefore, we conclude that $\rho(A-\mathcal{E})<1$ and by \Cref{thm:zeroconv}, $A-\mathcal{E}$ is zero-convergent.
\end{proof}

\Cref{ex:extreme2} below provides an even more extreme scenario.

\begin{example} \label{ex:extreme2}
Let
\begin{equation*}
    A = \begin{pmatrix}
    1/2^{100} & (2^{100}-1)/2^{100} \\
    1/2 & 1/2
    \end{pmatrix} \; \text{and} \;\, \mathcal{E} = \begin{pmatrix}
        1/2 & 0\\
        0 & 1/3
    \end{pmatrix}.
\end{equation*}
Note that $\mathcal{E}(1)=2^{99} A(1,1)$ and $\rho(A-\mathcal{E}) < 0.95$.
Hence, $A-\mathcal{E}$ is still zero-convergent.
\end{example}

Another scenario below is when the learning rates of numerous agents can be greater than but are also independent of the values of their self-beliefs.

\begin{example} \label{ex:indep}
Let $A$ be a row-stochastic matrix of size $n \ge 2$ and let $\mathcal{E}$ be a diagonal nonnegative matrix of size $n$.
Let $A(1,1)=1$ and for all $i \in \{2,3,\dots,n\}$, let $A(i, i-1) = 1$.
Then, $A-\mathcal{E}$ is zero-convergent if and only if $0 < \mathcal{E}(1) <2$ and $0 \le \mathcal{E}(i) < 1$ for all $i \in \{2,3,\dots,n\}$.
This is clear since 
\[
\rho(A-\mathcal{E}) = \max_{1 \leq i \leq n} \lvert (A-\mathcal{E})(i,i) \rvert.
\]
\end{example}

\Cref{prop:highratiolearning}, \Cref{ex:extreme2}, and \Cref{ex:indep} signify the complexity of classifying $A-\mathcal{E}$, where $A-\mathcal{E}$ can contain some negative diagonal entries, such that $\rho(A-\mathcal{E})<1$.
There does not seem to exist a concise enough condition that resembles the condensely-anchored property for the case $A-\mathcal{E} \ge 0$.

Determining when $\rho(A-\mathcal{E}) \ge 1$ motivates the following problem, which may be of independent interest.

\begin{problem} \label{prob:genconstantbound}
    Let $A$ be a row-stochastic matrix of size $n > 0$ and let $\mathcal{E}$ be a diagonal nonnegative matrix of size $n$.
    Suppose that there exists an $i \in \{1,2,\dots,n\}$ such that $\mathcal{E}(i) \geq 3$.
    Is it true that $\rho(A-\mathcal{E}) \geq 1$?
\end{problem}

Partial results and progress on this problem and its variations are delegated to \Cref{appendix}.

\subsection{Impaired averaging} \label{sec:impaired_ave}
Having established the algebraic and graph properties of condensely anchored networks, we now contextualize this mechanism within the broader literature on opinion dynamics. Within the framework of the pure DeGroot model \cite{degroot1974reaching}, agents engage in the iterative process of averaging the beliefs held by their neighbours. This refined model has emerged as a fundamental tool in the study of social learning \cite{cao2008reaching,chazelle2011total,fagnani2009average,gao2025opinion,sobel2000economists}. Despite its inherent simplicity, the DeGroot model exhibits fragility: the system's dynamics lack robustness, and the inclusion of even a solitary erroneous agent, who fails to compute averages accurately, has the potential to skew the consensus towards any arbitrary value \cite{acemouglu2013opinion,amir2025granular,li2025upper,rabinovich2021erratic, shi2025consensus,sridhar2023mean,zelazo2015robustness}.

An important extension of the classical DeGroot model is the Friedkin--Johnsen (FJ) model 
\cite{friedkin1990social,friedkin2011social}, which incorporates both social influence 
and agents' intrinsic predispositions. The dynamics are given by
\begin{equation}\label{eq:FJ-dynamics}
X_{t+1} = \Lambda A X_t \;+\; (I-\Lambda)X_0,
\end{equation}
where $\Lambda = \diag(\lambda_1,\dots,\lambda_n)$ with $0\le \lambda_i \le 1$ 
    encodes each agent’s \emph{susceptibility to social influence},  $I-\Lambda$ encodes the complementary \emph{stubbornness} or weight on 
    initial predispositions $X_0$. For $\lambda_i=1$, agent $i$ is fully susceptible and updates exactly as in the 
DeGroot model. For $\lambda_i=0$, agent $i$ is completely stubborn, anchoring 
their opinion forever at $X_t(i)$. For intermediate values $0<\lambda_i<1$, 
agents blend social influence with anchoring to their original predisposition. Unlike the pure DeGroot model, which generically converges to consensus under mild 
connectivity assumptions, the Friedkin--Johnsen dynamics converge to a nontrivial 
compromise equilibrium given by
\begin{equation}\label{eq:FJ-equilibrium}
    X^\star \;=\; (I - \Lambda A)^{-1}(I - \Lambda) X_0.
\end{equation}
This fixed point reflects a balance between interpersonal influence and 
anchoring to initial opinions. In particular, disagreement can persist in 
equilibrium, making the model suitable for explaining opinion diversity in social groups \cite{friedkin1990social, BallottaFJ2024}. The modification is an \emph{external anchoring} to initial opinions, so the equilibrium is a nontrivial compromise between $X_0$ and interpersonal influence. Persistent disagreement is the generic outcome.

While the Friedkin-Johnsen model produces substochastic dynamics through 
\emph{intentional anchoring} to initial opinions, we now consider a fundamentally 
different form of substochasticity arising from \emph{unintentional dissipation}. 
Formally, if some rows of the averaging matrix $A$ do not sum to one, the system 
becomes \emph{proper substochastic}. This corresponds to a loss of mass in the update, 
as though information is dissipating at each step. Concrete interpretations include: An agent discounts available signals so that $ \sum_{j=1}^n A_t(i,j) < 1$ for some $i$. An agent forgets part of its state when forming $X_{t+1}$ from $X_t$. And finally, external dissipation, opinion strength leaks out of the system. In the absence of stubborn agents or exogenous learning inputs, such impairments 
force the state toward the zero vector, which becomes the unique fixed point of 
the dynamics. This contrasts sharply with FJ, where the missing weight is preserved 
through anchoring to $X_0$, yielding persistent disagreement at a nontrivial equilibrium. 
In faulty averaging, the missing weight simply vanishes: consensus through extinction 
rather than compromise. In this sense, impaired averaging can substitute for perturbations or feedback 
control: if a row-sum is less than $1$, then the spectral radius drops below $1$, which ensures asymptotic decay.
The following corollary of Theorem~\ref{thm:condenseanchor} makes this precise.

\begin{corollary}\label{cor:impairedlearner}
     Let $A$ be a proper-substochastic matrix of positive size.
     Then $A$ is zero-convergent if and only if $A$ is condensely anchored.
\end{corollary}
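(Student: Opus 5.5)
The plan is to reduce the corollary directly to \Cref{thm:condenseanchor} through the decomposition used in the remark following that theorem. Since $A$ is proper substochastic, it is nonnegative and row substochastic. Define the diagonal matrix $\mathcal{E}$ by $\mathcal{E}(i) = 1 - \row_i(A)\mathbf{1}$, so that $\mathcal{E} \ge 0$. Then set $\bar{A} = A + \mathcal{E}$. This matrix is nonnegative and each of its rows sums to exactly $1$, so $\bar{A}$ is row stochastic. By construction $\bar{A} - \mathcal{E} = A \ge 0$. This is precisely the unique pair invoked in the proof of \Cref{cor:anchorsubstoch}.

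In this language, ``$A$ is condensely anchored'' means, as in \Cref{dfn:condenseanchor}, that $\bar{A}-\mathcal{E}$ is condensely anchored. By \Cref{cor:anchorsubstoch}, the anchors are exactly the rows whose sums are strictly less than $1$. The hypotheses of \Cref{thm:condenseanchor} hold: $\bar{A}$ is row stochastic, $\mathcal{E}$ is diagonal and nonnegative, and $\bar{A}-\mathcal{E}\ge 0$. The equivalence $(1)\iff(4)$ in that theorem then says that $A=\bar{A}-\mathcal{E}$ is condensely anchored if and only if it is zero-convergent, which is the claim.

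The only point needing care is interpretive rather than technical: the phrase ``condensely anchored'' must be read for a substochastic matrix via this canonical decomposition. The corollary does not use the proper-substochastic hypothesis except to guarantee that at least one anchor exists. If $A$ were row stochastic, it would have no anchors, so it would be neither condensely anchored nor zero-convergent, and the equivalence would hold trivially anyway. I do not expect any real obstacle beyond stating this identification explicitly.
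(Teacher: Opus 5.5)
Your proposal is correct and follows the paper's route: the paper obtains this corollary from Theorem~\ref{thm:condenseanchor}, using the unique decomposition of a row-substochastic matrix as (row stochastic) minus (diagonal nonnegative), as in Corollary~\ref{cor:anchorsubstoch} and the remark after the theorem, and then applies $(1)\iff(4)$, exactly as you do. Your observation that proper substochasticity is not really needed also agrees with the paper, which notes that the equivalence holds for every row-substochastic matrix.
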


\Cref{fig:substochastic} illustrates how condensely anchored, and condensely aperiodic, fits into substochasticity.
\Cref{cor:impairedlearner} highlights that proper-substochastic DeGroot dynamics deserve study in their own right as models of imperfect information processing, rather than merely as technical artifacts arising from stubborn or adversarial agents. An open direction is to consider the opposite case of ``zealots" whose row-sums exceed one, which may lead to opinion divergence to infinity.

\begin{figure}
\centering

\begin{tikzpicture}[
    scale=0.85, transform shape, 
    every node/.style={font=\small},
    lbl/.style={font=\small, align=center},
    properColor/.style={fill=teal!10},
    stochColor/.style={fill=gray!20},
    aperiodicColor/.style={fill=blue!50, fill opacity=0.3},
    anchoredColor/.style={fill=purple!50, fill opacity=0.3}
]

\fill[properColor] (0,0) circle (3cm);

\begin{scope}
    \clip (0,0) circle (3cm);
    \fill[stochColor]
        (0.2, 3.5) .. controls (0.0, 1.5) and (0.0, -1.5) .. (0.2, -3.5)
        -- (4,-3.5) -- (4,3.5) -- cycle;
\end{scope}


\begin{scope}
    \clip (0,0) circle (3cm);
    \fill[blue!40, fill opacity=0.3] 
        (-3.5, 3.5) -- (3.5, 3.5) -- (3.5, 1.5) 
        .. controls (0.8, 1.0) and (-0.8, 1.0) .. (-3.5, 1.5) -- cycle;
    \fill[pattern=north west lines, pattern color=blue!60!black, opacity=0.4]
        (-3.5, 3.5) -- (3.5, 3.5) -- (3.5, 1.5) 
        .. controls (0.8, 1.0) and (-0.8, 1.0) .. (-3.5, 1.5) -- cycle;
    
    \draw[blue!70!black, thin] (-3.5, 1.5) .. controls (-0.8, 1.0) and (0.8, 1.0) .. (3.5, 1.5);
\end{scope}

\begin{scope}
    \clip (0,0) circle (3cm);
    \fill[purple!40, fill opacity=0.3]
        ({3*cos(130)},{3*sin(130)}) 
        arc[start angle=130, end angle=230, radius=3]
        .. controls (-1.5, -0.5) and (-1.5, 1.8) .. 
        ({3*cos(130)},{3*sin(130)}) -- cycle;
    \fill[pattern=grid, pattern color=purple!60!black, opacity=0.4]
        ({3*cos(130)},{3*sin(130)}) 
        arc[start angle=130, end angle=230, radius=3]
        .. controls (-1.5, -0.5) and (-1.5, 1.8) .. 
        ({3*cos(130)},{3*sin(130)}) -- cycle;

    \draw[purple!70!black, thin]
        ({3*cos(130)},{3*sin(130)})
        .. controls (-1.5, 1.8) and (-1.5, -0.5) ..
        ({3*cos(230)},{3*sin(230)});
\end{scope}


\draw[thick, black!80] (0.2, 3) .. controls (0.0, 1.5) and (0.0, -1.5) .. (0.2, -3);

\draw[black, thick] (0,0) circle (3cm);

\node[font=\normalsize\bfseries] at (0,3.6) {Row-substochastic matrices};

\def\bracespace{0.1}
\draw[decorate, decoration={brace, amplitude=7pt, mirror}] 
    (-3.0,-3.6) -- ({0.15-\bracespace},-3.6) 
    node[midway, below=7pt, lbl] {Proper substochastic\\$\rho(A)\le 1$};

\draw[decorate, decoration={brace, amplitude=7pt, mirror}] 
    ({0.1+\bracespace},-3.6) -- (3.0,-3.6) 
    node[midway, below=7pt, lbl] {Row stochastic\\$\rho(A)=1$};

\begin{scope}[shift={(3.4,1.0)}]
    \draw[rounded corners, fill=white, draw=gray!50] (0,0) rectangle (6.3,-2.5);
    
    \fill[blue!40, fill opacity=0.3] (0.3,-0.6) rectangle (1.0,-1.1);
    \fill[pattern=north west lines, pattern color=blue!60!black, opacity=0.4] (0.3,-0.6) rectangle (1.0,-1.1);
    \draw[blue!70!black, thin] (0.3,-0.6) rectangle (1.0,-1.1);
    \node[anchor=west] at (1.3,-0.85) {Condensely aperiodic};
    
    \fill[purple!40, fill opacity=0.3] (0.3,-1.4) rectangle (1.0,-1.9);
    \fill[pattern=grid, pattern color=purple!60!black, opacity=0.4] (0.3,-1.4) rectangle (1.0,-1.9);
    \draw[purple!70!black, thin] (0.3,-1.4) rectangle (1.0,-1.9);
    \node[anchor=west] at (1.3,-1.65) {Condensely anchored ($\rho(A)<1$)};
\end{scope}

\end{tikzpicture}

\caption{A schematic diagram depicting the relationships among various subsets of row-substochastic matrices.}
\label{fig:substochastic}
\end{figure}
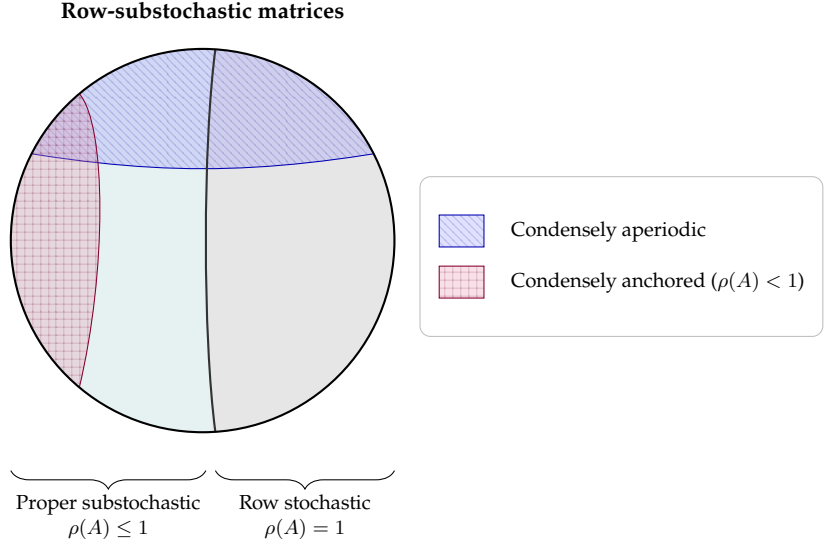

\section{Time-variant case}\label{sec:timvar}
We now turn to time-varying dynamics \cite{Jadbabiecoord2003,Tsitsiklis86dist}. First, recall the update rule from \Cref{eq:average+learn}:
\begin{equation}
    X_{t+1} - \ms = B_t(X_t - \ms),
\end{equation}
where $B_t \coloneqq A_t - \mathcal{E}_t$ for integers $t \geq 0$.

In Section~\ref{sec:timeinvar}, the spectral radius dictated convergence. Here, it fails. A sequence of matrices can satisfy $\rho(B_t) < 1$ at every step, yet their infinite product diverges. Spectral methods are no longer enough. We need stronger tools.

We break the analysis into two regimes:
\begin{enumerate}
    \item \emph{Vanishing Learning:} What happens when the learning rate $\mathcal{E}_t(i)$ decays to zero? Does the network lose the ground truth? We establish a ``minimal persistence'' condition on the decay rates alone. If the learning persists just enough, convergence holds.
    \item \emph{Mixed Operator Norms:} We build a new framework extracting two-step contraction from the interplay of alternating operator norms ($\ell_1 \to \ell_\infty$ and $\ell_\infty \to \ell_1$). This guarantees contractivity in intermittent environments exactly where single-norm and spectral bounds fail.
\end{enumerate}

\subsection{Vanishing learning rates}
In the earlier regime $0<\mathcal{E}_t(i)<2 A_t(i,i)$ but now we add the possibility that learning ability is vanishing. 
Each agent eventually has positive self-belief, however, no additional structure is required on the interaction matrix $A$. Condition (A3) ($\sum_{t=0}^{\infty} \min_i \mathcal{E}_t(i) = \infty$) is analogous to the slow cooling condition in simulated annealing. As established by \cite{geman1984stochastic, hajek1988cooling}, this condition ensures sufficient 'total cooling' to overcome any finite energy barrier and reach the global optimum. 
In our context, with effective temperature $T_t= 1/ \mathcal{E}_{\min,t}$ condition (A3) guarantees that $\displaystyle \int_0^{\infty} \mathcal{E}_{\min,t} dt = \infty$, allowing the system to escape any metastable state (wrong consensus) and reach consensus.


\begin{theorem}[Consensus with Vanishing Rates]\label{thm:main_vanish}
Consider the dynamics \eqref{eq:average+learn}.
Suppose that the learning rates $\{\mathcal{E}_t(i)\}_{t=0}^{\infty}$ satisfy the following conditions for all $i \in \{1,\ldots,n\}$:
\begin{enumerate}
    \item[(A1)] \textit{Eventual boundedness}: There exists $T_1 \in \N$ such that for all $t \geq T_1$:
    $$0 \leq \mathcal{E}_t(i) \leq A_{t}(i,i).$$ 
    \item[(A2)] \textit{Vanishing}: $\lim_{t \to \infty} \mathcal{E}_t(i) = 0$.
    \item[(A3)] \textit{Minimum persistence}: $\sum_{t=0}^{\infty} \min_{i=1,\ldots,n} \mathcal{E}_t(i) = \infty$.
\end{enumerate}
Then all agents reach consensus:
\[
\lim_{t \to \infty} X_t = \ms.
\]
\end{theorem}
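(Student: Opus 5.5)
The plan is a single $\infty$-norm contraction estimate, iterated and then summed using (A3). Put $e_t \coloneqq X_t-\ms$. By \eqref{eq:A+L_rewrite} we have $e_{t+1}=B_t e_t$ with $B_t = A_t-\mathcal{E}_t$.

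First fix $t \ge T_1$. By (A1), $0 \le \mathcal{E}_t(i) \le A_t(i,i)$, so $B_t \ge 0$. Its $i$-th row-sum is $1-\mathcal{E}_t(i)$, as in the proof of \Cref{prop:anchorAminE>=0}. Since $B_t$ is nonnegative, its induced $\infty$-norm is its largest row-sum:
\[
\norm{B_t}_\infty = \max_{1\le i\le n}\bigl(1-\mathcal{E}_t(i)\bigr) = 1-\min_{1 \le i \le n}\mathcal{E}_t(i) \eqqcolon 1-m_t .
\]
Moreover $m_t \in [0,1]$, because $\mathcal{E}_t(i) \le A_t(i,i) \le 1$. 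Hence no graph structure of $A_t$ is needed. In particular we need neither the condensely-anchored property nor aperiodicity, which matches the claim that persistence of the rates alone suffices.

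Second, iterate from time $T_1$. By submultiplicativity, for every $t \ge T_1$,
\[
\norm{e_{t}}_\infty \le \prod_{s=T_1}^{t-1}(1-m_s)\,\norm{e_{T_1}}_\infty \le \exp\Bigl(-\sum_{s=T_1}^{t-1} m_s\Bigr)\norm{e_{T_1}}_\infty ,
\]
using $1-x\le e^{-x}$. The finitely many steps before $T_1$ only produce a fixed vector $e_{T_1}=B_{T_1-1}\cdots B_0 e_0$. This vector is finite whatever sign pattern or size $\mathcal{E}_t$ has for $t<T_1$, so no control of those steps is needed.

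Third, apply (A3). Since $\sum_{t=0}^\infty m_t=\infty$ and only finitely many terms lie before $T_1$, the tail satisfies $\sum_{s\ge T_1} m_s=\infty$. Therefore the exponential factor tends to $0$, and $X_t\to\ms$.

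Condition (A2) is not used in this argument. Its only role is interpretive, since it makes this the vanishing regime; I would say so in a remark. The only point needing care is the second step: the bound must hold uniformly in $t$, which is why the estimate uses the product of one-step norms and never a spectral radius. This product works because every $B_t$ with $t \ge T_1$ is nonnegative, so each one-step $\infty$-norm equals the largest row-sum $1-m_t$ exactly. No further obstacle is expected.
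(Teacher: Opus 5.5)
Your proposal is correct and follows the paper's proof essentially step for step: for $t \ge T_1$ the row sums of $B_t \ge 0$ equal $1-\mathcal{E}_t(i)$, giving $\norm{Y_{t+1}}_\infty \le (1-\min_i \mathcal{E}_t(i))\norm{Y_t}_\infty$, and the resulting product vanishes by (A3) via $\ln(1-x)\le -x$, which is equivalent to your $1-x\le e^{-x}$. Your observation that (A2) is never used also holds for the paper's argument.
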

\begin{proof}
For any $t \geq T_1$, by assumption (A1), we have $0 \leq \mathcal{E}_t(i) \leq A_t(i,i)$.  Therefore, $|A_t(i,i)- \mathcal{E}_t(i)| = A_t(i,i)- \mathcal{E}_t(i)$ for all $t \geq T_1$. 

For the infinity norm of $Y_{t+1}$ when $t \geq T_1$:
\begin{align*}
    |Y_{t+1}(i)| &= \left|\sum_{j=1}^{n} B_t(i,j)\,Y_t(j)\right| \leq \sum_{j=1}^{n} |B_t(i,j)|\,|Y_t(j)| \leq \|Y_t\|_{\infty} \sum_{j=1}^{n} |B_t(i,j)|
\end{align*}

Calculating the sum of the row for $t \geq T_1$: \begin{align*}
\sum_{j=1}^{n} |B_t(i,j)| &= |A_t(i,i) - \mathcal{E}_t(i)| + \sum_{j \neq i} |A_t(i,j)|\\
&= (A_t(i,i) - \mathcal{E}_t(i)) + \sum_{j \neq i} A_t(i,j) \quad \text{(since } A_t(i,j) \geq 0 \text{ and } A_t(i,i) \geq \mathcal{E}_t(i) \text{)}\\
&= A_t(i,i) - \mathcal{E}_t(i) + (1 - A_t(i,i)) \quad \text{(row-stochasticity of } A\text{)}\\
&= 1 - \mathcal{E}_t(i).
\end{align*}

Define $\beta_t \coloneqq \max_{i=1,\ldots,n}(1 - \mathcal{E}_t(i))$ for $t \geq T_1$. We have:
$$\beta_t = 1 - \min_{i=1,\ldots,n} \mathcal{E}_t(i) = 1 - \mathcal{E}_{\min,t} \leq 1$$
where $\mathcal{E}_{\min,t} = \min_{i} \mathcal{E}_t(i) \geq 0$ for $t \geq T_1$. Note that by definition, $\beta_t = \max_i(1-\mathcal{E}_t(i)) = 1-\min_i \mathcal{E}_t(i)$. The dynamical system then becomes, for $t \geq T_1$:
$$\|Y_{t+1}\|_{\infty} \leq \beta_t \|Y_t\|_{\infty}.$$
For any $t > T_1$, we can write:
\begin{align*}
    \|Y_t\|_{\infty} &\leq \beta_{t-1} \|Y_{t-1}\|_{\infty}
    \leq \prod_{k=T_1}^{t-1} \beta_k \cdot \|Y_{T_1}\|_{\infty} = \prod_{k=T_1}^{t-1} (1 - \mathcal{E}_{\min,k}) \cdot \|Y_{T_1}\|_{\infty}.
\end{align*}
We need to show that $\prod_{k=T_1}^{\infty} (1 -\mathcal{E}_{\min,t}) = 0$.
Note that $\ln(1-x) \leq -x$ for all $0 \leq x \le 1$.

\begin{align*}
\ln\left(\prod_{k=T_1}^{\infty} (1 - \mathcal{E}_{\min,k})\right) = \sum_{k=T_1}^{\infty} \ln(1 - \mathcal{E}_{\min,k})
\leq -\sum_{k=T_1}^{\infty} \mathcal{E}_{\min,k}.
\end{align*}

By assumption (A3), we have:
\[
\sum_{t=0}^{\infty} \min_{i=1,\ldots,n} \mathcal{E}_t(i) = \infty.
\]

The tail sum must likewise diverge because we are eliminating a finite number of terms (from $t=0$ to $t=T_1-1$):
\[
\sum_{t=T_1}^{\infty} \mathcal{E}_{\min,t} = \sum_{t=T_1}^{\infty} \min_{i=1,\ldots,n} \mathcal{E}_t(i) = \infty.
\]

Therefore, we obtain the final step
\[
\sum_{k=T_1}^{\infty} \ln(1 - \mathcal{E}_{\min,k}) \leq -\sum_{k=T_1}^{\infty} \mathcal{E}_{\min,k} = -\infty \implies \prod_{k=T_1}^{\infty} (1 - \mathcal{E}_{\min,k})=0.
\]
And this allows us to have:
\[\lim_{t \to \infty} \|Y_t\|_{\infty} = 0\] meaning $\displaystyle \lim_{t \to \infty} X_t = \ms. $
\end{proof}

Positive self-belief of the agents $A_t(i,i)>0$ was not required at all times, indeed this is milder than requiring the matrix $A_t$ to be primitive or aperiodic and irreducible: full connectedness is not required, allowing considerable freedom in the type of network topologies possible. There could be finite periods where $A_t(i,i)=0$. Furthermore, when $A_t(i,i)=0$, the learning ability is also zero: $\mathcal{E}_t(i)=0$. Agents have the potential to achieve the ground truth as long as they are acquiring knowledge and their decline in learning capacity is measured. If this deterioration is too rapid, they will fail to attain $\ms$.
\begin{remark}
We provide some examples of learning rates.
\begin{enumerate}
    \item Suppose the learning rates are only positive or non-zero for even or odd times:
    \[
     \mathcal{E}_t(i) = \begin{cases}
        \frac{1}{2t}\, \text{ for even time steps}, \\
        \frac{1}{2t-1}\,  \text{ for odd time steps}.
    \end{cases}
    \]
In the case above we know that the Harmonic series $H_n=\sum_{k=1}^n \frac{1}{k}$ diverges, and since the even and odd numbers perfectly partition the integers $\frac{1}{2}H_n \to \infty$ as well. The event $A_t(i,i)=\mathcal{E}_t(i)=0$ may occur \emph{infinitely} many times yet consensus still follows, provided the minimum-persistence (A3) holds. For instance, every even $t=2\mathbb{N}$ we could have both self-belief and learning zero: \[
A_t(i,i)=\mathcal{E}_t(i)=0.
\]  The learning rate being positive on the odd integer times ensures divergence.
\item 	\emph{Example (finite zeros, assumptions satisfied).}
Fix $T\in\N$. Take $A_t=I, \, \forall t \geq T$ (row-stochastic) and set
\[
\mathcal{E}_t(i)=0\quad\text{for }t\le T,\qquad 
\mathcal{E}_t(i)=\frac{c}{(t+1)\ln(t+2)}\quad\text{for }t>T,
\]
with $0<c\le \tfrac12$. Then $A_t(i,i)\equiv1\ge\alpha=1$, so the event $A_t(i,i)=\mathcal{E}_t(i)=0$ is confined to the finite set $\{0,\dots,T\}$. Also,
$\sum_t \mathcal{E}_{\min,t}=\infty$ by the integral test for $\sum 1/(t\ln t)$, so consensus holds.

\begin{corollary}[Persistence from the tail]\label{cor:persistence-tail}
Let $\mathcal{E}_t(i)=0$ for $t\le T$ and $\mathcal{E}_t(i)=\dfrac{c}{(t+1)\ln(t+2)}$ for $t>T$, with $0<c\le \tfrac12$. Then
\[
\sum_{t=0}^{\infty}\mathcal{E}_{\min,t}
=\sum_{t=T+1}^{\infty} \frac{c}{(t+1)\ln(t+2)}
=c\sum_{t=T+1}^{\infty}\frac{1}{(t+1)\ln(t+2)}
=\infty.
\]
\end{corollary}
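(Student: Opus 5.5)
Since every $\mathcal{E}_t(i)$ vanishes for $t \le T$, we have $\mathcal{E}_{\min,t}=0$ there, and for $t>T$ all agents share the same rate, so $\mathcal{E}_{\min,t}=\frac{c}{(t+1)\ln(t+2)}$. The first two equalities in the statement are therefore immediate: drop the zero terms and pull the constant $c>0$ out of the sum. The only real content is the divergence of $\sum_{t>T}\frac{1}{(t+1)\ln(t+2)}$, and since $c>0$ this gives divergence of the whole series.

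For the divergence I would compare with a simpler series. Since $\ln(t+2)\le \ln(2(t+1)) \le 2\ln(t+1)$ for $t\ge 1$, we get $\frac{1}{(t+1)\ln(t+2)} \ge \frac{1}{2(t+1)\ln(t+1)}$. Reindexing with $s=t+1$, it then suffices to show that $\sum_{s\ge s_0}\frac{1}{s\ln s}$ diverges, where $s_0=T+2\ge 2$.

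For this I would apply the integral test to $f(x)=\frac{1}{x\ln x}$, which is positive and decreasing on $[2,\infty)$. Its antiderivative $\ln\ln x$ tends to $\infty$, so $\sum_{s\ge s_0} f(s)\ge\int_{s_0}^{\infty} f(x)\,dx=\infty$. Alternatively, Cauchy condensation turns the series into $\sum_k \frac{2^k}{2^k k\ln 2}=\frac{1}{\ln 2}\sum_k \frac1k$, which diverges.

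The only mild obstacle is the shifted argument $\ln(t+2)$ against $(t+1)$. The comparison above handles it, or one can apply the integral test directly to $x\mapsto \frac{1}{(x+1)\ln(x+2)}$, which is also decreasing. Finally, since $0<c\le\tfrac12$ and $A_t=I$ for $t\ge T$, assumption (A1) holds, and (A2) holds because the rate tends to zero. So the corollary indeed feeds into \Cref{thm:main_vanish}, although that is not needed for the displayed identity itself.
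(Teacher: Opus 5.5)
Your proof is correct and takes essentially the paper's route. The paper simply invokes the integral test for $\sum 1/(t\ln t)$. Your comparison $\ln(t+2)\le 2\ln(t+1)$, which is valid because $t\ge T+1\ge 1$, just spells out the reduction from the shifted series that the paper leaves implicit.
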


\end{enumerate}
\end{remark}

\subsection{Mixed operator norms}
\label{sec:timevarmixed}

The previous result (Theorem~\ref{thm:main_vanish}) establishes that convergence is robust to the temporal decay of the learning signal, provided the cumulative learning impulse diverges. However, this analysis relies primarily on the magnitude of the diagonal entries of $\mathcal{E}_t$. It treats the network structure $A_t$ somewhat passively, requiring only row-stochasticity to maintain the error bounds.

In many dynamic scenarios, the challenge is not just that the learning signal fades, but that the interaction network $A_t$ fluctuates in ways that can transiently amplify errors. As noted in Section~\ref{sec:timvar}, even if each instantaneous update matrix $B_t$ is strictly contractive in terms of spectral radius, their non-commutative product may lead to divergence.

To address this structural instability, we require a more granular analysis of how the update operator $B_t$ reshapes the error distribution. We move beyond simple induced norms (like $\ell_\infty \to \ell_\infty$) and introduce a framework based on \emph{mixed operator norms}. By decomposing the error propagation into two complementary mechanisms: peak-dissipation ($\ell_1 \to \ell_\infty$) and mass-preservation ($\ell_\infty \to \ell_1$). We derive checkable conditions for convergence that exploit the network's mixing ability, providing a stability guarantee that reinforces the minimal persistence results derived above.

Let $M$ be a real square matrix of size $n > 0$ and let $1 \leq p, q \leq \infty$.
We define the following norm, which we generally refer to as the \emph{mixed operator norm}:
\[
\lVert M \rVert_{p \to q} \coloneqq \sup_{\mathbf{v} \in \mathbb{R}^n \backslash \{0\}} \frac{\lVert M \mathbf{v} \rVert_q}{\lVert \mathbf{v} \rVert_p}.
\]
Alternatively, we also call the norm as the matrix $p \to q$ norm.
If $p=q$ then the $p \to q$ norm simply becomes the induced matrix $p$-norm.
By \cite[Theorem 2.1]{Lewis23}, the $p \to q$ norm has explicit formulae for distinct $p, q \in \{1,2,\infty\}$.
Some of them are:
\begin{align}
    \norm{M}_{1 \to \infty} &= \max \left\{ \lvert M(i,j) \rvert : i,j \in \{1,2,\dots,n\} \right\}, \label{eq:1toinf} \\
    \norm{M}_{\infty \to 1} &= \max \left\{ \norm{M \mathbf{v}}_1 : \mathbf{v} \in \{ \pm 1 \}^n \right\}. \label{eq:infto1}
\end{align}
A recent treatment on the problem of estimating the $p \to q$ norm for more general values of $p$ and $q$ can be found in \cite{GMU23}.
The following lemma is a useful property used frequently throughout.
\begin{lemma} \label{lem:MNmixednorms}
Let $M$, $N$ be real square matrices.
Then, for any $1\le p,q,r \le \infty$, we have
\[
\norm{MN}_{q\to p}\;\le\;\norm{M}_{r\to p}\,\norm{N}_{q\to r}
\]
\end{lemma}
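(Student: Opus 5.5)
The plan is to prove this submultiplicativity directly from the definition of the mixed operator norm, factoring through the intermediate space $(\mathbb{R}^n, \norm{\cdot}_r)$. The idea is to bound $\norm{MN\mathbf{v}}_p$ for an arbitrary nonzero vector $\mathbf{v}$ by applying the definition of $\norm{M}_{r\to p}$ to the vector $N\mathbf{v}$, and then the definition of $\norm{N}_{q\to r}$ to $\mathbf{v}$. A supremum over $\mathbf{v}$ then finishes the argument.

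First I record the elementary consequence of the definition: for any real square matrix $K$, any $1\le s,u\le\infty$ and any $\mathbf{w}\in\mathbb{R}^n$, we have $\norm{K\mathbf{w}}_u \le \norm{K}_{s\to u}\norm{\mathbf{w}}_s$. For $\mathbf{w}\neq 0$ this is immediate from the supremum, and for $\mathbf{w}=0$ both sides vanish. Finiteness of $\norm{K}_{s\to u}$ follows from the equivalence of norms on $\mathbb{R}^n$, or from the fact that the supremum can be taken over the compact unit sphere of $\norm{\cdot}_s$.

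Next, fix $\mathbf{v}\neq 0$ and set $\mathbf{w}=N\mathbf{v}$. Applying the inequality with $K=M$, $(s,u)=(r,p)$ gives
\[
\norm{MN\mathbf{v}}_p \le \norm{M}_{r\to p}\norm{N\mathbf{v}}_r .
\]
Applying it again with $K=N$, $(s,u)=(q,r)$, and using $\norm{M}_{r\to p}\ge 0$, gives
\[
\norm{MN\mathbf{v}}_p \le \norm{M}_{r\to p}\norm{N}_{q\to r}\norm{\mathbf{v}}_q .
\]
Dividing by $\norm{\mathbf{v}}_q>0$ and taking the supremum over $\mathbf{v}\neq 0$ yields $\norm{MN}_{q\to p}\le\norm{M}_{r\to p}\norm{N}_{q\to r}$.

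There is no real obstacle here. The only points needing care are the case $N\mathbf{v}=0$, which the vector inequality above already covers, and keeping the index order consistent: $N$ maps from $\ell_q$ to $\ell_r$ and $M$ maps from $\ell_r$ to $\ell_p$, so the composition maps $\ell_q$ to $\ell_p$.
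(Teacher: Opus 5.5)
Your proof is correct and follows the paper's argument exactly: bound $\norm{MN\mathbf{v}}_p \le \norm{M}_{r\to p}\norm{N\mathbf{v}}_r \le \norm{M}_{r\to p}\norm{N}_{q\to r}\norm{\mathbf{v}}_q$ and take the supremum over $\mathbf{v}\neq 0$. Your remarks on the case $N\mathbf{v}=0$ and on the finiteness of the norms are harmless additions that the paper leaves implicit.
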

\begin{proof}
For any $\mathbf{v} \neq 0$,
\[
\norm{MN \mathbf{v}}_p
\;\le\;\norm{M}_{r\to p}\,\norm{N \mathbf{v}}_r
\;\le\;\norm{M}_{r\to p}\,\norm{N}_{q\to r}\,\norm{\mathbf{v}}_q.
\]
Taking the supremum over $\mathbf{v} \neq 0$ yields the claim.
\end{proof}

Let $t \geq 0$ be an integer and  $V_t \in \mathbb{R}^n$ be the vector such that 
\begin{equation}\label{e:V}
V_t(i) = \lvert A_t(i,i)-\mathcal{E}_t(i) \rvert\text{ for all }i \in \{1,2,\dots,n\}.
\end{equation}
This vector $V_t$ will be frequently used throughout the remainder of this paper.

\begin{lemma} \label{lem:infto1<=}
If $t \geq 0$ is an integer, then
\[
\lVert B_t \rVert_{\infty \to 1} \leq n-\tr A_t + \mathbf{1}^\top V_t.
\]
\end{lemma}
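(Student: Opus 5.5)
The plan is to bound the $\infty\to 1$ norm by the entrywise $\ell_1$ norm (sum of absolute values of all entries) of $B_t$, and then compute that sum directly. For any $\mathbf{v}$ with $\norm{\mathbf{v}}_\infty \le 1$,
\[
\norm{B_t\mathbf{v}}_1 = \sum_{i=1}^n \Bigl\lvert \sum_{j=1}^n B_t(i,j)\mathbf{v}(j)\Bigr\rvert \le \sum_{i=1}^n\sum_{j=1}^n \lvert B_t(i,j)\rvert .
\]
Hence $\norm{B_t}_{\infty\to 1} \le \mathbf{1}^\top \lvert B_t\rvert \mathbf{1}$. Alternatively, one can start from formula \eqref{eq:infto1} and restrict to sign vectors; the same triangle inequality applies.

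The second step evaluates $\mathbf{1}^\top \lvert B_t\rvert \mathbf{1}$ row by row, exactly as in the proofs of \Cref{prop:positivelearn} and \Cref{thm:main_vanish}. Since $\mathcal{E}_t$ is diagonal, the off-diagonal entries of $B_t$ coincide with those of $A_t$, and these are nonnegative. Using row-stochasticity of $A_t$, the $i$-th absolute row sum is therefore
\[
\sum_{j\ne i} A_t(i,j) + \lvert A_t(i,i)-\mathcal{E}_t(i)\rvert = 1 - A_t(i,i) + V_t(i).
\]
Summing over $i$ gives $n - \tr A_t + \mathbf{1}^\top V_t$, which is the claimed bound.

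There is no real obstacle here. The only point needing care is that $A_t - \mathcal{E}_t$ may have negative diagonal entries, so the absolute value on the diagonal must be kept as $V_t(i)$ rather than simplified; the off-diagonal entries need no such care.
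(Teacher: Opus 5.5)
Your proposal is correct and follows essentially the same route as the paper: a row-wise triangle inequality gives absolute row sums $1-A_t(i,i)+V_t(i)$, and these are summed over $i$. The only cosmetic difference is that the paper applies the triangle inequality to sign vectors and then invokes formula \eqref{eq:infto1}, whereas you bound $\norm{B_t\mathbf{v}}_1$ directly for all $\norm{\mathbf{v}}_\infty\le 1$, which makes that formula unnecessary.
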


\begin{proof}
Let $i \in \{1,2,\dots,n\}$ and let $\mathbf{v} \in \{\pm 1\}^n$.
By the triangle inequality, we have
\[
\lvert \row_i(B_t) \, \mathbf{v} \rvert \leq \sum_{j=1}^n \lvert B_t(i,j) \rvert = 1-A_t(i,i)+\lvert A_t(i,i)-\mathcal{E}_t(i) \rvert = 1-A_t(i,i) + V_t(i).
\]
It follows that
\[
\lVert B_t \mathbf{v} \rVert_1 = \sum_{i=1}^n \lvert \row_i(B_t) \, \mathbf{v} \rvert \leq n-\tr A_t+\sum_{i=1}^n V_t(i) = n-\tr A_t+\mathbf{1}^\top V_t.
\]
Hence, by \cite[Theorem 2.1]{Lewis23}, we obtain
\[
\lVert B_t \rVert_{\infty \to 1} = \max\{ \Vert B_t \mathbf{v} \rVert_1 : \mathbf{v} \in \{\pm 1\}^n \} \leq n-\tr A_t+\mathbf{1}^\top V_t. \tag*{\qedhere}
\]
\end{proof}

Based on \Cref{lem:infto1<=} above, we will derive conditions such that $\displaystyle \lim_{t \to \infty} Y_t = 0$ where $Y_t \coloneqq X_t-\bar\sigma$.
This mixed-norm approach is better suited to dynamics when the $\infty$-norm fails to produce a contraction.  \Cref{ex:n2} elucidates this approach where two steps forward produce a contraction. However, the actual conditions of the \Cref{thm:normcond} need to be elaborated.

The inequality $\displaystyle \lvert B_t(i,j) \rvert \le  \frac{1}{n-\tau'}$ in \eqref{eq:normcond} below is an upper bound for all entries of $\lvert B_t \rvert$.
Both inequalities counteract each other and there is a balancing tension between them.
To find desirable $\tau$ and $\tau'$, we note that the easier one inequality is to satisfy, the harder the other becomes. The expression $\tr A_t - \mathbf{1}^\top V_t$ can be thought of as the learning mass. If there is no learning in the system, the mass becomes zero. Unlike in the fixed case, in the time-varying case we cannot directly work with the anchoring concept. 
Therefore, to formulate the theorem, it is necessary to introduce precise definitions for the two expressions. Though now if the learning mass is positive, the system has a learner but we cannot ascertain the topology from this condition, hence the smallness of $B_t$ is needed.

\begin{remark}
Mathematically, the necessity of the entry-wise bound $\displaystyle |B_t(i,j)| \le \frac{1}{n-\tau'}$ arises from the need to ensure that the learning influence is ``diffused" correctly through the network. Averaging and its social aspect is important. While the learning mass $\tr A_t - \mathbf{1}^\top V_t$ guarantees that ``energy" is leaving the system, the entry-wise bound ensures that no individual interaction is strong enough to isolate a sub-group from this learning influence. In this sense, the $\infty \to 1$ approach allows us to prove convergence by aggregating the global ``contraction" of the system even when a local contraction (in the standard $\infty$-norm) cannot be guaranteed at every time step.
\end{remark}

\begin{theorem} \label{thm:normcond}
Let $n$ be a positive integer and let $\tau$, $\tau'$ be real numbers such that $0\le \tau' < \tau\le n$.
For each integer $t \geq 0$, suppose that
\begin{equation} \label{eq:normcond}
    \tr A_t - \mathbf{1}^\top V_t \ge  \tau \;\;\, \text{and} \;\;\, \lvert B_t(i,j) \rvert \le  \frac{1}{n-\tau'}
\end{equation}
for all $i, j \in \{1,2,\dots,n\}$.
Then $\displaystyle \lim_{t \to \infty} Y_t = 0$.
\end{theorem}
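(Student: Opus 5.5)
The plan is to obtain a two-step contraction in the $\infty$-norm by factoring the two-step operator through $\ell_1$, using \Cref{lem:MNmixednorms} with $r=1$. For each $t$,
\[
\norm{B_{t+1}B_t}_{\infty\to\infty} \le \norm{B_{t+1}}_{1\to\infty}\,\norm{B_t}_{\infty\to 1}.
\]
By \eqref{eq:1toinf}, the hypothesis $\lvert B_{t+1}(i,j)\rvert \le 1/(n-\tau')$ gives $\norm{B_{t+1}}_{1\to\infty}\le 1/(n-\tau')$. By \Cref{lem:infto1<=} and the learning-mass hypothesis $\tr A_t-\mathbf{1}^\top V_t\ge\tau$, we get $\norm{B_t}_{\infty\to1}\le n-(\tr A_t-\mathbf{1}^\top V_t)\le n-\tau$. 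Hence
\[
\norm{B_{t+1}B_t}_\infty \le \frac{n-\tau}{n-\tau'} \eqqcolon \gamma<1,
\]
which uses $\tau'<\tau\le n$. In the degenerate case $\tau=n$ we simply get $\gamma=0$. Note also $\tau'<n$, so the denominator is positive.

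Next I control a single step so that odd times are handled. Each row sum of $\lvert B_t\rvert$ equals $1-A_t(i,i)+V_t(i)$. This is not assumed to be at most $1$, but the entrywise hypothesis gives the crude bound $\norm{B_t}_\infty\le n/(n-\tau')\eqqcolon K$, uniformly in $t$. Iterating $Y_{t+1}=B_tY_t$ and grouping factors in consecutive pairs then gives
\[
\norm{Y_{2k}}_\infty\le\gamma^k\norm{Y_0}_\infty \quad\text{and}\quad \norm{Y_{2k+1}}_\infty\le K\gamma^k\norm{Y_0}_\infty.
\]
Both tend to $0$, so $Y_t\to0$, with a geometric rate.

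The only step needing care is checking that \Cref{lem:infto1<=} applies without a sign assumption on $A_t(i,i)-\mathcal{E}_t(i)$. It does, because $V_t$ already contains the absolute value. I also need to confirm that the product order matches the mixed-norm lemma, with the $\infty\to1$ norm applied first (to $B_t$) and the $1\to\infty$ norm second (to $B_{t+1}$). Otherwise the argument is a direct chaining, and I expect no real obstacle.
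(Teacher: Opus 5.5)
Your proposal is correct and follows essentially the same argument as the paper: the two-step bound $\norm{B_{t+1}B_t}_\infty \le \norm{B_{t+1}}_{1\to\infty}\norm{B_t}_{\infty\to1} \le \frac{n-\tau}{n-\tau'}$ via \Cref{lem:MNmixednorms} and \Cref{lem:infto1<=}, then iteration in pairs. The only cosmetic difference is at odd times: the paper simply starts the pairing from $\norm{Y_1}_\infty$, whereas you bound $\norm{B_0}_\infty \le n/(n-\tau')$, which is valid but not needed.
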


\begin{proof}
By \cite[Theorem 2.1]{Lewis23} and \eqref{eq:normcond}, we have that
\begin{equation} \label{eq:1toinf<}
    \lVert B_t \rVert_{1 \to \infty} = \max \left\{ \lvert B_t(i,j) \rvert : 1 \leq i,j \leq n \right\} \le \frac{1}{n-\tau'}
\end{equation}
for all integers $t \geq 0$.
By \eqref{eq:normcond} and \Cref{lem:infto1<=}, we obtain
\begin{equation} \label{eq:infto1<=}
    \lVert B_t \rVert_{\infty \to 1} \leq n-\tr A_t+\mathbf{1}^\top V_t \le n-\tau
\end{equation}
for all integers $t \geq 0$.

Let $t \ge 0$ be an integer.
Since $Y_{t+2} = B_{t+1} Y_{t+1} = B_{t+1} B_t Y_t$, by \Cref{lem:MNmixednorms}, we obtain
\begin{equation} \label{eq:normineq}
    \lVert Y_{t+2} \rVert_\infty \leq \lVert B_{t+1} B_t \rVert_{\infty} \cdot \lVert Y_t \rVert_{\infty} \leq \lVert B_{t+1} \rVert_{1 \to \infty} \cdot \lVert B_t \rVert_{\infty \to 1} \cdot \lVert Y_t \rVert_\infty.
\end{equation}
Define $\beta_t \coloneqq \lVert B_{t+1} \rVert_{1 \to \infty} \cdot \lVert B_t \rVert_{\infty \to 1}$.
By \eqref{eq:1toinf<} and \eqref{eq:infto1<=}, we have 
\[
\beta_t \le  \frac{n-\tau}{n-\tau'} < 1.
\]
From \eqref{eq:normineq}, we deduce that if $t > 1$ is an even integer, then
\[
\lVert Y_t \rVert_\infty \leq (\beta_0 \beta_2 \cdots \beta_{t-2}) \cdot \lVert Y_0 \rVert_\infty \le \left( \frac{n-\tau}{n-\tau'} \right)^{t/2} \cdot \lVert Y_0 \rVert_\infty.
\]
Similarly, if $t > 1$ is an odd integer then
\[
\lVert Y_t \rVert_\infty \leq (\beta_1 \beta_3 \cdots \beta_{t-2}) \cdot \lVert Y_1 \rVert_\infty \le \left( \frac{n-\tau}{n-\tau'} \right)^{(t-1)/2} \cdot \lVert Y_1 \rVert_\infty.
\]
In both cases, $\norm{Y_t}_\infty \to 0$ as $t \to \infty$ and therefore, $\displaystyle \lim_{t \to \infty} Y_t = 0$.
\end{proof}

The following theorem generalizes \Cref{thm:normcond} where the quantities $\tau$ and $\tau'$ now depend on the time parameter $t$.

\begin{theorem} \label{thm:normcondgeneral}
Let $\tau_t, \tau'_t$ be real numbers such that $0 \le \tau_t' < \tau_t \le n$.
For each integer $t \geq 0$, suppose that
\begin{equation} \label{eq:normcondgeneral}
    \tr A_t - \mathbf{1}^\top V_t \geq \tau_t \;\;\, \text{and} \;\;\, \lvert B_t(i,j) \rvert \le  \frac{1}{n-\tau'_t}
\end{equation}
for all $i, j \in \{1,2,\dots,n\}$.
Suppose also that
\begin{equation}\label{e:cond_conv}
\prod_{s=0}^{\lfloor t/2 \rfloor-1} \frac{n-\tau_{2s+f(t)}}{n-\tau'_{2s+f(t)+1}} \xrightarrow[t\to\infty]{} 0,
\end{equation}
where $f(t)=0$ if $t$ is even, and $f(t)=1$ otherwise.
Then $\displaystyle \lim_{t \to \infty} Y_t = 0$.
In particular, \eqref{e:cond_conv} is satisfied if there exist $\gamma_0, \gamma_1 \in \mathbb{R}$ such that for all $t \ge 0$,
\begin{equation}\label{e:cond_conv_2}
0 \le \tau'_t\le \gamma_0 < \gamma_1 \le \tau_t \le n.
\end{equation}
\end{theorem}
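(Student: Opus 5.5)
The plan is to repeat the two-step argument of \Cref{thm:normcond}, now keeping the time-dependent constants instead of replacing them by uniform ones.

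\textbf{Per-step bounds.} First, fix $t \ge 0$. By \eqref{eq:1toinf} and the entrywise bound in \eqref{eq:normcondgeneral}, we get $\norm{B_t}_{1\to\infty} \le 1/(n-\tau'_t)$. By \Cref{lem:infto1<=} and the trace bound, we get $\norm{B_t}_{\infty\to 1} \le n-\tau_t$. Applying \Cref{lem:MNmixednorms} with $r=1$ and $p=q=\infty$ gives
\[
\norm{Y_{t+2}}_\infty \le \norm{B_{t+1}}_{1\to\infty}\,\norm{B_t}_{\infty\to1}\,\norm{Y_t}_\infty \le \frac{n-\tau_t}{n-\tau'_{t+1}}\,\norm{Y_t}_\infty .
\]
Each factor is finite and nonnegative, since $\tau'_{t+1} < \tau_{t+1} \le n$ gives $n-\tau'_{t+1}>0$.

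\textbf{Iterating along each parity class.} For even $t$, iterate from $Y_0$ using the pairs $(2s, 2s+1)$ for $s=0,\dots,t/2-1$. This gives $\norm{Y_t}_\infty \le \prod_{s=0}^{t/2-1}\frac{n-\tau_{2s}}{n-\tau'_{2s+1}}\norm{Y_0}_\infty$. For odd $t$, iterate from $Y_1$ using the pairs $(2s+1,2s+2)$ for $s=0,\dots,(t-1)/2-1=\lfloor t/2\rfloor-1$. This gives the product with offset $f(t)=1$ times $\norm{Y_1}_\infty$. In both cases the bound is exactly the product in \eqref{e:cond_conv} times $\max(\norm{Y_0}_\infty,\norm{Y_1}_\infty)$, so \eqref{e:cond_conv} yields $Y_t\to 0$.

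\textbf{The sufficient condition \eqref{e:cond_conv_2}.} Under \eqref{e:cond_conv_2}, each factor satisfies $\frac{n-\tau_{2s+f}}{n-\tau'_{2s+f+1}} \le \frac{n-\gamma_1}{n-\gamma_0}=:\theta$. Here $n-\gamma_0>0$ because $\gamma_0<\gamma_1\le n$, and $0\le\theta<1$. Hence the product is at most $\theta^{\lfloor t/2\rfloor}\to 0$.

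The only delicate point is the index bookkeeping. One must check that the pairing $(\tau_{2s+f(t)}, \tau'_{2s+f(t)+1})$ in \eqref{e:cond_conv} matches the order of factors in the product $B_{t+1}B_t$: the $\infty\to1$ norm falls on the earlier matrix and the $1\to\infty$ norm on the later one. One must also confirm that the number of pairs is $\lfloor t/2\rfloor$ in both parities. Everything else is routine.
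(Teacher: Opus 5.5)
Your proposal is correct and follows the paper's proof essentially step for step. It uses the same per-step bounds $\norm{B_t}_{1\to\infty}\le 1/(n-\tau'_t)$ and $\norm{B_t}_{\infty\to1}\le n-\tau_t$, the same two-step estimate via \Cref{lem:MNmixednorms} with the $\infty\to1$ norm on the earlier factor, iteration within each parity class starting from $Y_{f(t)}$, and the same uniform ratio bound $(n-\gamma_1)/(n-\gamma_0)<1$ under \eqref{e:cond_conv_2}, with index bookkeeping that matches \eqref{e:cond_conv} exactly.
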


\begin{proof}
The proof is similar to the proof of \Cref{thm:normcond}.
From \eqref{eq:normcondgeneral}, we have that
\begin{align*}
    \norm{B_t}_{1 \to \infty} &\le \frac{1}{n-\tau'_t} \;,\\
    \norm{B_t}_{\infty \to 1} &\le n-\tr A_t+\mathbf{1}^\top V_t \le n-\tau_t
\end{align*}
for all integers $t \ge 0$.

Let $t \ge 0$ be an integer.
Since $Y_{t+2} = B_{t+1} Y_{t+1} = B_{t+1} B_t Y_t$, by \Cref{lem:MNmixednorms}, we obtain
\begin{equation} \label{eq:normineqgeneral}
    \lVert Y_{t+2} \rVert_\infty \leq \lVert B_{t+1} B_t \rVert_{\infty} \cdot \lVert Y_t \rVert_{\infty} \leq \lVert B_{t+1} \rVert_{1 \to \infty} \cdot \lVert B_t \rVert_{\infty \to 1} \cdot \lVert Y_t \rVert_\infty.
\end{equation}
If $\beta_t \coloneqq \lVert B_{t+1} \rVert_{1 \to \infty} \cdot \lVert B_t \rVert_{\infty \to 1}$ then
\[
\beta_t \le \frac{n-\tau_t}{n-\tau'_{t+1}}.
\]
Using \eqref{eq:normineqgeneral}, we have
\[
\norm{Y_t}_\infty \le \norm{Y_{f(t)}}_\infty \prod_{s=0}^{\lfloor t/2 \rfloor-1} \beta_{2s+f(t)} \le \norm{Y_{f(t)}}_\infty \prod_{s=0}^{\lfloor t/2 \rfloor-1} \frac{n-\tau_{2s+f(t)}}{n-\tau'_{2s+f(t)+1}}.
\]
From \eqref{e:cond_conv}, we conclude that $\displaystyle \lim_{t \to \infty} Y_t = 0$.
Next, suppose that there exist $\gamma_0, \gamma_1 \in \mathbb{R}$ such that for all $t \ge 0$, \eqref{e:cond_conv_2} is satisfied.
Then
\[
\beta_t \le \frac{n-\tau_t}{n-\tau'_{t+1}} \le \frac{n-\gamma_1}{n-\gamma_0} < 1
\]
for all $t$.
Since
\[
\left( \frac{n-\gamma_1}{n-\gamma_0} \right)^{\lfloor t/2 \rfloor} \xrightarrow[t \to \infty]{} 0,
\]
then \eqref{e:cond_conv} holds.
\end{proof}

We illustrate \Cref{thm:normcondgeneral} with \Cref{ex:n2} below.

\begin{example} \label{ex:n2}
Let $t \geq 0$ be an integer.
We let $\displaystyle a_t = \frac{t+3}{3(t+2)}$ and let
\[
A_t = \begin{pmatrix}
    a_t & 1-a_t \\
    1/2 & 1/2
\end{pmatrix},\; \mathcal{E}_t = \begin{pmatrix}
    0 & 0 \\
    0 & 1/2
\end{pmatrix}, \; \text{and} \; B_t = A_t -\mathcal{E}_t = \begin{pmatrix}
    a_t & 1-a_t \\
    1/2 & 0
\end{pmatrix}.
\]
We will see that there exist $\tau'_t$ and $\tau_t$ such that $0 \le \tau'_t < \tau_t \le 2$ and \eqref{eq:normcondgeneral} holds.
First, note that $V(t)=(a_t, 0)^\top$ so $\tr A_t - \mathbf{1}^\top V_t = 1/2$.
If such $\tau_t$ exists then $\tau_t \leq 1/2$.
On the other hand,
\[
\max_{i, j \in \{1,2\}} \lvert B_t(i,j) \rvert = 1-a_t = \frac{2t+3}{3(t+2)}
\]
so if such $\tau'_t$ exists then we must have
\[
\frac{2t+3}{3(t+2)} \le \frac{1}{2-\tau'_t} \implies \tau'_t \ge \frac{t}{2t+3}.
\]
Hence, we must have that
\[
0 \le \frac{t}{2t+3} \le \tau'_t < \tau_t \le \frac{1}{2},
\]
which shows that the desired $\tau'_t$ and $\tau_t$ exist for all $t$.
However, there are no $\gamma_0$, $\gamma_1$ such that \eqref{e:cond_conv_2} is satisfied, since $t/(2t+3)$ can get arbitrarily close to $1/2$.
Instead, we will show that \eqref{e:cond_conv} can hold.
Given an integer $t$, let $f(t)=0$ if $t$ is even, and $f(t)=1$ otherwise.
Fix $\tau'_t = t/(2t+3)$ and $\tau_t = 1/2$, for all $t$.
If $s$ is an integer such that $0 \le s \le \lfloor t/2 \rfloor-1$, then
\[
\frac{2-\tau_{2s+f(t)}}{2-\tau'_{2s+f(t)+1}} = \frac{4s+2f(t)+5}{4s+2f(t)+6} = 1-\frac{1}{4s+2f(t)+6} < 1.
\]
Using $\ln(1-x)\le -x$ for $x \in [0,1]$, we obtain
\[
\ln\prod_{s=0}^{\lfloor t/2 \rfloor-1} \frac{2-\tau_{2s+f(t)}}{2-\tau'_{2s+f(t)+1}} \le -\sum_{s=0}^{\lfloor t/2 \rfloor-1} \frac{1}{4s+2f(t)+6} \xrightarrow[t\to\infty]{} -\infty.
\]
It follows that
\[
\prod_{s=0}^{\lfloor t/2 \rfloor-1} \frac{2-\tau_{2s+f(t)}}{2-\tau'_{2s+f(t)+1}} \xrightarrow[t\to\infty]{} 0
\]
and \eqref{e:cond_conv} is satisfied.
By \Cref{thm:normcondgeneral}, we conclude that $\displaystyle \lim_{t \to \infty} Y_t = 0$.
This example showcases that even if $\lVert B_t \rVert_{\infty} =1$ for all $t$, we can still achieve zero-convergence for $Y_t$.
\end{example}

\begin{remark}[One learner model]
We consider the case when there is only a single learner agent.
If we are to apply \Cref{thm:normcond}, there are some necessary constraints imposed by the assumptions of \Cref{thm:normcond}, which will be examined below.

Let $n>1$ be an integer and let $\tau$, $\tau'$ be real numbers such that $0\le \tau' < \tau\le n$.
Fix $t$ and suppose that only one agent \(i_t\) is a learner, i.e., \(\mathcal{E}_t(i_t)>0\) and \(\mathcal{E}_t(j)=0\) for all \(j\neq i_t\).
Now, suppose that $\tr A_t - \mathbf{1}^\top V_t \ge \tau > 0$ and
\[
\lVert B_t \rVert_{1 \to \infty} = \max_{1 \leq i,j \leq n} \lvert B_t(i,j) \rvert \le \frac{1}{n-\tau'}.
\]
The one learner model will allow us to study the combination of these two conditions.
The quantity
\[
\tr A_t-\mathbf{1}^\top V_t
=\sum_{i=1}^n\!\Big(A_t(i,i)-\big|A_t(i,i)-\mathcal{E}_t(i)\big|\Big)
\]
measures the total margins to anchoring boundaries at time \(t\).
In particular, this quantity in one learner model equals
\[
\tr A_t-\mathbf{1}^\top V_t
= A_t(i_t,i_t)-\big|A_t(i_t,i_t)-\mathcal{E}_t(i_t)\big|
=
\begin{cases}
\mathcal{E}_t(i_t) & \text{if} \;\, \mathcal{E}_t(i_t)\le A_t(i_t,i_t),\\[2mm]
2A_t(i_t,i_t)-\mathcal{E}_t(i_t) & \text{if} \;\, \mathcal{E}_t(i_t) > A_t(i_t,i_t).
\end{cases}
\]
First, we must have $A_t(i_t, i_t) > 0$.
Otherwise, if $A_t(i_t, i_t) = 0$ then $\tr A_t - \mathbf{1}^\top V_t = -\mathcal{E}_t(i_t) < 0$, a contradiction.
Next, it is necessarily true that
\begin{equation} \label{e:0<E<2A}
    0 < \mathcal{E}_t(i_t) < 2A_t(i_t, i_t).
\end{equation}
Otherwise, if $\mathcal{E}_t(i_t) \ge 2A_t(i_t, i_t) > A_t(i_t, i_t)$ then $\tr A_t - \mathbf{1}^\top V_t = 2A_t(i_t, i_t)-\mathcal{E}_t(i_t) \le 0$, a contradiction.

It is obvious that $B_t = A_t-\mathcal{E}_t$ has the same entries as $A_t$ except that $\lvert B_t(i_t, i_t) \rvert < A_t(i_t, i_t)$.
Let $\hat{A}_t$ be the matrix with the same entries as $A_t$ except that $\hat{A}_t(i_t,i_t)=0$.
In this scenario, we have the equality
\[
\|B_t\|_{1\to\infty}
= \max\Big\{\|\hat{A}_t\|_{1\to\infty},\,|B_t(i_t,i_t)|\Big\} =
\max\Big\{\|\hat{A}_t\|_{1\to\infty},\,|A_t(i_t,i_t)-\mathcal{E}_t(i_t)|\Big\}.
\]
Consequently, from $\displaystyle \|B_t\|_{1\to\infty}\le \frac{1}{n-\tau'}<\frac{1}{n-\tau}$, we have
\begin{equation}\label{e:com1_suff}
\max\Big\{\|\hat{A}_t\|_{1\to\infty},\,|A_t(i_t,i_t)-\mathcal E_t(i_t)|\Big\}
\le 
\frac{1}{n-\tau'}<\frac{1}{n-\tau}.
\end{equation}

Notice that because $A_t$ is a row-stochastic matrix, for each row other than $i_t$, its maximum entry is at least $1/n$.
Thus, \begin{equation}\label{e:smallness}
1/n \le \| \hat{A}_t \|_{1\to\infty}
\end{equation}
with equality if and only if: $A_t(i,j)=1/n$ for all $1\le i, j\le n$, $i\ne i_t$ and $A_t(i_t,j)\le 1/n$ for all $j\ne i_t$. 

Since $A_t(i_t, i_t) \le 1$ and $\tr A_t - \mathbf{1}^\top V_t \ge \tau > 0$, we have
\[
\tau \le A_t(i_t,i_t)-\big|A_t(i_t,i_t)-\mathcal{E}_t(i_t)\big| \le 1,
\]
which also means that $\tau \le 1 <n$.
Combined with \eqref{e:com1_suff}, we have
\begin{equation} \label{e:extend}
    \max\Big\{\|\hat{A}_t\|_{1\to\infty},\,|A_t(i_t,i_t)-\mathcal E_t(i_t)|\Big\} < \frac{1}{n-\tau} \le \frac{1}{n-A_t(i_t,i_t)+\big|A_t(i_t,i_t)-\mathcal{E}_t(i_t)\big|} \le \frac{1}{n-1}.
\end{equation}
In particular, $\|\hat{A}_t\|_{1\to\infty} < 1/(n-1)$, which enforces on $\hat{A}_t$ some ``smallness" condition, i.e., all entries of $\hat{A}_t$ must be smaller than $1/(n-1)$.

Using \eqref{e:extend}, we obtain
\begin{equation} \label{e:com2}
|B_t(i_t,i_t)| = \big|A_t(i_t,i_t)-\mathcal{E}_t(i_t)\big|< \min\left\{ r\!\big(A_t(i_t,i_t)\big),\frac{1}{\|\hat{A}_t\|_{1\to\infty}}-n+ A_t(i_t,i_t) \right\},
\end{equation}
where
\[
r(a) \coloneqq \frac{\sqrt{(n-a)^2+4}-(n-a)}{2}.
\]
To see why this is the case, notice that from \eqref{e:extend}, both $\|\hat{A}_t\|_{1\to\infty}$ and $|A_t(i_t,i_t)-\mathcal E_t(i_t)|$ must be less than
\[
\frac{1}{n-A_t(i_t,i_t)+|A_t(i_t,i_t)-\mathcal{E}_{t}|},
\]
which leads to \eqref{e:com2}.
Notice also that \eqref{e:com2} characterizes the admissible range of $\big|A_t(i_t,i_t)-\mathcal{E}_t(i_t)\big|$.
Moreover, as we have mentioned, $\|\hat{A}_t\|_{1\to\infty}\ge 1/n$, and yields in particular that 
\[
0<\mathcal{E}_t(i_t)<2A_t(i_t,i_t),
\]
which is consistent with \eqref{e:0<E<2A}.

Using \eqref{e:com2} and \eqref{e:smallness}, we obtain
\[
\frac{1}{n} \le \|\hat{A}_t\|_{1\to\infty} < \frac{1}{n-A_t(i_t,i_t)} \le \frac{1}{n-1}.
\]
This is a condition on the smallness of the matrix $\hat{A}_t$ that is not automatically guaranteed and means that in general there are restrictions on the matrix $A_t$ in the case of a single learner. We can interpret this also as a diffusion of learning.  In particular, the fact that the entries are not too large forces interaction.
For example, the entries of the non-learner's row must all be positive.
\end{remark}

\section{Models with noise}\label{sec:noise}
We consider two settings. First, the general case of additive noise is studied under the infinity-norm contraction. Second, this result is then combined with the previous section \ref{sec:timevarmixed} to obtain a new robustness result.
\subsection{General additive noise} 
The Theorem below is a generalization  of the original one in \cite{popescu2023averaging}, where the noise was in the feedback or control term. Here it acts an external noise term $\gamma_t$.

The paper \cite{popescu2023averaging} studies noisy feedback of the form
\[
X_t = A_t X_{t-1} + \mathcal{E}_t(\bar\sigma + \gamma_t - X_{t-1}).
\]
Expanding the correction term yields
\[
X_t = A_tX_{t-1} + \mathcal{E}_t(\bar\sigma - X_{t-1}) + \mathcal{E}_t\gamma_t,
\]
and hence, for $Y_t \coloneqq X_t-\bar\sigma$,
\[
Y_t = (A_t-\mathcal{E}_t)Y_{t-1} + \mathcal{E}_t\gamma_t.
\]
Thus PV's model induces an additive disturbance $r_t \coloneqq \mathcal{E}_t\gamma_t$ in the error recursion.
In the present work we allow a general exogenous perturbation $r_t$ added directly to the state,
\[
X_{t+1}=A_tX_t+\mathcal{E}_t(\bar\sigma-X_t)+r_t,
\]
which is strictly more general unless $\mathcal{E}_t$ is invertible (or $r_t$ lies in the range of $\mathcal{E}_t$
coordinatewise). In the Theorem~\ref{t:35} below, we refer to the general $\gamma_t$ as additive external noise.

\noindent\textbf{Setup and role of the assumptions.}
Throughout we assume $\bar\sigma=\sigma\mathbf 1$. Since each $A_t$ is row-stochastic, $A_t\bar\sigma=\bar\sigma$,
and the centered error $Y_t\coloneqq X_t-\bar\sigma$ satisfies
\[
Y_t=(A_t-\mathcal E_t)Y_{t-1}+\gamma_t=:B_tY_{t-1}+\gamma_t.
\]
Let $\rho_t\ge \|B_t\|_{\infty\to\infty}$. The small-gain condition \eqref{eq:42} provides uniform control of the
geometric tail $\Lambda_t=\rho_t+\rho_t\rho_{t-1}+\cdots+\rho_t\cdots\rho_1$, which implies exponential bounds on
products $\prod_{i=s+1}^t\rho_i$ (Lemma~\ref{lem1}). This guarantees that the initial error is forgotten and, if
$\gamma_t\to0$, consensus $X_t\to\bar\sigma$ follows from the variation-of-constants bound. In part~(iv), the same
contraction mechanism is applied on $\mathcal P_1(\mathbb R^n)$ under $W_1$-type metrics to obtain convergence in law \cite{Santambrogio2015OptimalTransport,Villani09}.

\begin{theorem}\label{t:35}
Assume
\[
X_t \;=\; A_t X_{t-1} + \mathcal{E}_t(\bar\sigma - X_{t-1})+\gamma_t,
\]
with $A_t$ deterministic and row-stochastic and $\mathcal{E}_t$ also deterministic diagonal nonnegative matrix. Let $B_t\coloneqq A_t-\mathcal{E}_t$ and select $\rho_t$ such that
\[
\rho_t
\ge \;\|B_t\|_{\infty\to\infty}
=\| A_t-\mathcal{E}_t \|_{\infty}
=\max_{i=1,\dots,n}\Big(\, |A_t(i,i)-\mathcal{E}_{t}(i)| + 1 - A_t(i,i)\,\Big).
\]
Assume
\begin{equation}\label{eq:42}
\sup_{t\ge 1}\{\rho_t + \rho_t\rho_{t-1} + \cdots + \rho_t\rho_{t-1}\cdots\rho_1\}\;<\;\infty.
\end{equation}
Then:
\begin{enumerate}
\item[(i)] If $\gamma_t\to 0$ a.s., then $X_t\to\bar\sigma$ a.s.
\item[(ii)] If $\gamma_t\to 0$ in probability, then $X_t\to\bar\sigma$ in probability.
\item[(iii)] If $\gamma_t\to 0$ in $L^p$, then $X_t\to\bar\sigma$ in $L^p$.
\item[(iv)] Suppose $Y_t=B_tY_{t-1}+\gamma_t$, with $\gamma_t$
integrable and independent of $\mathcal{F}_{t-1}$ with $\mathcal{F}_t=\sigma(\gamma_s :s\le t)$. In addition to \eqref{eq:42}, assume
\begin{equation}\label{eq:44}
B_t \xrightarrow[t\to\infty]{}B
\end{equation}
for some matrix $B$.

If in addition we have that 
\begin{equation}\label{eq:UI}
W_1(\gamma_t,\gamma)\xrightarrow[t\to\infty]{}0,
\end{equation}
then there exists an $n$-dimensional random vector $Y$ such that $Y_t\Rightarrow Y$.
\item[(v)] Moreover, if the limit $\gamma$ in  \eqref{eq:UI} is integrable and non-degenerate a.s., then without \eqref{eq:44} the conclusion in (iv) may fail.
\end{enumerate}
\end{theorem}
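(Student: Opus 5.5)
The plan is to work with the centered error $Y_t=X_t-\bar\sigma$, which satisfies $Y_t=B_tY_{t-1}+\gamma_t$. Set $\Phi(t,s)\coloneqq B_tB_{t-1}\cdots B_{s+1}$, with $\Phi(t,t)=I$. Variation of constants gives
\[
Y_t=\Phi(t,0)Y_0+\sum_{s=1}^{t}\Phi(t,s)\gamma_s ,
\]
and submultiplicativity of the $\infty$-norm gives $\norm{\Phi(t,s)}_\infty\le\prod_{i=s+1}^t\rho_i$.

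\textbf{Step 1 (exponential bound).} I first turn \eqref{eq:42} into an exponential bound; this is the content of Lemma~\ref{lem1}, and I would prove it directly. Write $\Lambda_t=\rho_t+\rho_t\rho_{t-1}+\cdots+\rho_t\cdots\rho_1$ and $C=\sup_t\Lambda_t$. The recursion $\Lambda_t=\rho_t(1+\Lambda_{t-1})$ gives $\rho_t=\Lambda_t/(1+\Lambda_{t-1})$. If some $\Lambda_i=0$, the corresponding products vanish and there is nothing to prove. Otherwise the product telescopes:
\[
\prod_{i=s+1}^t\rho_i=\frac{\Lambda_t}{1+\Lambda_s}\prod_{i=s+1}^{t-1}\frac{\Lambda_i}{1+\Lambda_i}\le C\,\theta^{\,t-s-1},\qquad \theta\coloneqq\frac{C}{1+C}<1 .
\]
So $\norm{\Phi(t,s)}_\infty\le K\theta^{t-s}$ with $K=C/\theta$. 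It follows that
\[
\norm{Y_t}_\infty\le K\theta^t\norm{Y_0}_\infty+K\sum_{s\le t}\theta^{t-s}\norm{\gamma_s}_\infty .
\]

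\textbf{Step 2 (parts (i)--(iii)).} For (i), apply the deterministic Toeplitz lemma pathwise: geometric weights turn a null sequence into a null sequence, so on the event $\{\gamma_s\to0\}$ we get $Y_t\to0$. For (iii), take $L^p$ norms and use Minkowski to get $\norm{Y_t}_{L^p}\le K\theta^t\norm{Y_0}_{L^p}+K\sum_s\theta^{t-s}\norm{\gamma_s}_{L^p}$, then apply the same Toeplitz argument.

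For (ii), split the sum at $s=t-m$. The recent block $\sum_{t-m<s\le t}$ has finitely many terms, each tending to $0$ in probability. The old block has total weight $O(\theta^m)$. This is where I expect the main obstacle. Convergence in probability only gives tightness, and the old block is a geometrically weighted sum of possibly heavy-tailed variables, so a naive union bound over $s\le t-m$ need not be summable. I would first try to control $P\big(\sum_{s\le t-m}\theta^{t-s}|\gamma_s|>\varepsilon\big)$ using the uniform tail function of the tight family. If that fails in general, I would add an explicit uniform-integrability (or $L^p$-boundedness) hypothesis, under which Markov's inequality closes the argument.

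\textbf{Step 3 (parts (iv) and (v)).} Since $B_t\to B$, Step 1 passes to the limit and gives $\norm{B^k}_\infty\le K\theta^k$, hence $\rho(B)<1$. Define $Y\coloneqq\sum_{k\ge0}B^k\gamma^{(k)}$ with $\gamma^{(k)}$ i.i.d.\ copies of $\gamma$; this series converges in $L^1$. Then couple $Y_t$ with $Y$: use independence of the $\gamma_s$ to pair each $\gamma_s$ with $\gamma^{(t-s)}$ through an optimal $W_1$ coupling. This yields
\[
W_1(Y_t,Y)\lesssim K\theta^t\,\mathbb E|Y_0|+\sum_{s\le t}K\theta^{t-s}W_1(\gamma_s,\gamma)+\sum_{s\le t}\norm{\Phi(t,s)-B^{t-s}}_\infty\mathbb E|\gamma|+\sum_{k>t}K\theta^k\,\mathbb E|\gamma| .
\]
Each term tends to $0$. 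The first and last are geometric. The second is a Toeplitz average of $W_1(\gamma_s,\gamma)\to0$. The third also vanishes: for fixed $t-s$ the difference $\Phi(t,s)-B^{t-s}\to0$ because $B_t\to B$, and the whole sum is dominated by $2K\theta^{t-s}\mathbb E|\gamma|$, so dominated convergence applies. Convergence in $W_1$ implies $Y_t\Rightarrow Y$.

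For (v) I will give a counterexample with $n=1$. Alternate $B_t=0$ for even $t$ and $B_t=\tfrac12$ for odd $t$; then \eqref{eq:42} holds. With $\gamma_t$ i.i.d.\ $\sim\gamma$ non-degenerate, $Y_t$ has law $\gamma$ along even $t$ and law $\gamma+\tfrac12\gamma'$ (with $\gamma'$ an independent copy) along odd $t$. These two laws have different variances (or differ under a characteristic-function comparison), so $Y_t$ has no limit in distribution.
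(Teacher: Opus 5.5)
\textbf{Part (ii) is a genuine gap, and it cannot be closed.} Your Step~1 is exactly the paper's Lemma~\ref{lem1}. Your pathwise and Minkowski--Toeplitz arguments for (i) and (iii) are correct. The paper itself only sets up the variation-of-constants bound and Lemma~\ref{lem1}, then defers (i)--(iii) to \cite{popescu2023averaging}. Your suspicion about (ii) is right: it is false as stated, so neither the tightness route nor any other argument will prove it.

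Take $n=1$, $A_t=1$, $\mathcal{E}_t=\tfrac12$, $\bar\sigma=0$ and $Y_0=0$. Then $B_t=\rho_t=\tfrac12$ and \eqref{eq:42} holds. Let $\gamma_s=2^{s^2}\mathbf{1}_{E_s}$, where the events $E_s$ are independent with probability $1/s$; the noise is even independent and bounded at each time. Since $\mathbb{P}(\gamma_s\neq0)=1/s\to0$, we have $\gamma_s\to0$ in probability. But $Y_t=\sum_{s\le t}2^{-(t-s)}\gamma_s\ge 2^{s^2-t+s}\ge1$ on $E_s$ for every $s\in[\sqrt t,t]$, so
\[
\mathbb{P}(Y_t<1)\le\prod_{s=\lceil\sqrt t\,\rceil}^{t}\Bigl(1-\frac1s\Bigr)=\frac{\lceil\sqrt t\,\rceil-1}{t}\le t^{-1/2}\longrightarrow0 .
\]
Hence $X_t\not\to\bar\sigma$ in probability. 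This is exactly the mechanism you named: rare but super-exponentially large old shocks outlive the geometric weights.

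Your fallback is the correct repair, and uniform integrability is more than you need. Suppose $M\coloneqq\sup_s\mathbb{E}\|\gamma_s\|_\infty<\infty$. Markov's inequality gives $\mathbb{P}(\text{old block}>\varepsilon)\le KM\theta^m/((1-\theta)\varepsilon)$. The recent block is a fixed finite sum tending to $0$ in probability. Let $t\to\infty$ and then $m\to\infty$.

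\textbf{Part (iv): correct, but a different route from the paper.} The paper first shows $\|B\|_\infty<1$, using $\rho_t\ge\|B_t\|_\infty$ and the exponential decay of the products. Hence the limit kernel $\mathcal{T}_\infty\mu=\Law(BZ+\gamma)$, $Z\sim\mu$, is a strict $D_\infty$-contraction with a unique fixed point. It then runs the perturbation recursion $d_t\le\|B\|_\infty d_{t-1}+\alpha_t$ for $d_t=D_\infty(\mu_t,\mathcal{T}_\infty^t\mu_0)$, with $\alpha_t=\|B_t-B\|_\infty\mathbb{E}\|Y_{t-1}\|_\infty+D_\infty(\gamma_t,\gamma)$. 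This requires the a priori bound $\sup_t\mathbb{E}\|Y_t\|_\infty<\infty$.

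You instead identify the limit explicitly as the law of $\sum_kB^k\gamma^{(k)}$. You couple the whole variation-of-constants sum at once and use dominated convergence in $k$. Your route needs only $\|B^k\|_\infty\le K\theta^k$, avoids the moment bound on $Y_t$, and gives the limit law in closed form. The paper's one-step kernel argument uses linearity essentially only through Lipschitz constants, so it adapts more readily to nonlinear contractive updates.

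\textbf{Part (v): correct and simpler than the paper's example.} Your period-two example is simpler than the paper's blocks of growing length, because the two laws are exact rather than approximate limits. However, the variance comparison requires $\gamma\in L^2$, and the statement only assumes integrability. In general use characteristic functions: $\varphi(\xi)=\varphi(\xi)\varphi(\xi/2)$ forces $\varphi\equiv1$ near $0$, hence $\gamma=0$ a.s., contradicting non-degeneracy.
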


We should point out that part (iv) here is a more general version of \cite[Theorem 4.1 part (iv)]{popescu2023averaging} and with a different proof included here.  

\begin{proof}
Before we jump into the technical part, let's work out some notations and preliminary observations. 

Since $A_t$ is row–stochastic, $A_t\bar\sigma=\bar\sigma$. Set $Y_t\coloneqq X_t-\bar\sigma$ and $B_t\coloneqq A_t-\mathcal{E}_t$. Then
\[
Y_t \;=\; B_t Y_{t-1} + \gamma_t
\]
and, by the induced operator norm,
\begin{equation}\label{eq:one-step}
\|Y_t\|_\infty \;\le\; \rho_t\,\|Y_{t-1}\|_\infty + \|\gamma_t\|_\infty,\qquad \rho_t\coloneqq\|B_t\|_{\infty}.
\end{equation}
Iterating (with the convention $\prod_{i=t+1}^t\rho_i=1$) gives the variation–of–constants bound
\begin{equation}\label{eq:VoC}
\|Y_t\|_\infty \;\le\; \Big(\prod_{s=1}^t \rho_s\Big)\,\|Y_0\|_\infty
\;+\; \sum_{s=0}^{t}\Big(\prod_{i=t-s+1}^{t}\rho_i\Big)\,\|\gamma_{t-s}\|_\infty.
\end{equation}

We use the following results from \cite{popescu2023averaging}.

\begin{lemma}[\cite{popescu2023averaging}]\label{lem1}
Let $\Lambda_t\coloneqq\rho_t+\rho_t\rho_{t-1}+\cdots+\rho_t\rho_{t-1}\cdots\rho_1$ and assume
$\sup_{t\ge 1}\Lambda_t\le \Lambda<\infty$.
Set
\[
c\coloneqq\log\Big(1+\frac{1}{\Lambda}\Big),\qquad K\coloneqq 1+\Lambda.
\]
Then, for all $0\le s<t$,
\begin{equation}\label{eq:A3}
\prod_{i=s+1}^{t}\rho_i \;\le\; K e^{-c(t-s)}.
\end{equation}
Moreover, for all $s\ge1$ and $t\ge s$,
\begin{equation}\label{eq:A5}
\prod_{i=t-s+1}^{t}\rho_i\Big(1+\rho_{t-s}+\rho_{t-s}\rho_{t-s-1}+\cdots+\rho_{t-s}\cdots\rho_{1}\Big)
\;\le\;K^2e^{-cs}.
\end{equation}
Conversely, if \eqref{eq:A3} holds for some $K,c>0$, then
\[
\sup_{t\ge 1}\Lambda_t\le \frac{K}{e^c-1}.
\]
\end{lemma}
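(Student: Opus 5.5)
The plan is to turn the tail sums $\Lambda_t$ into a one-step recursion and telescope it. For $0\le s\le t$ write $P_{s,t}\coloneqq\prod_{i=s+1}^{t}\rho_i$, with $P_{t,t}=1$, and set
\[
S_t\coloneqq 1+\Lambda_t=\sum_{s=0}^{t}P_{s,t}\quad (t\ge1),\qquad S_0\coloneqq 1 .
\]
Factoring $\rho_t$ out of every term except $P_{t,t}=1$ gives the recursion $S_t=1+\rho_t S_{t-1}$ for all $t\ge1$. Two facts follow at once: $1\le S_t\le 1+\Lambda=K$ for every $t$, and, since $S_{t-1}\ge1>0$, we may write
\[
\rho_t=\frac{S_t-1}{S_{t-1}} .
\]
This identity is the heart of the argument. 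It also covers the degenerate case $\rho_t=0$ without any special treatment.

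\textbf{Proof of \eqref{eq:A3}.} Substituting the identity into the product and regrouping,
\[
P_{s,t}=\prod_{i=s+1}^{t}\frac{S_i-1}{S_{i-1}}=\frac{S_t}{S_s}\prod_{i=s+1}^{t}\Bigl(1-\frac{1}{S_i}\Bigr).
\]
Each factor satisfies $1-1/S_i\le 1-1/(1+\Lambda)=\Lambda/(1+\Lambda)=e^{-c}$. The prefactor satisfies $S_t/S_s\le K/1$. Hence $P_{s,t}\le K e^{-c(t-s)}$.

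\textbf{Proof of \eqref{eq:A5}.} The left-hand side of \eqref{eq:A5} is exactly $P_{t-s,t}\,S_{t-s}$. By the same regrouping,
\[
P_{t-s,t}\,S_{t-s}=S_t\prod_{i=t-s+1}^{t}\Bigl(1-\frac1{S_i}\Bigr)\le K e^{-cs}.
\]
This is already at most $K^2e^{-cs}$, because $K\ge1$.

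\textbf{Converse.} Assume \eqref{eq:A3} holds for some $K,c>0$. Then
\[
\Lambda_t=\sum_{s=0}^{t-1}P_{s,t}\le K\sum_{k=1}^{t}e^{-ck}\le K\sum_{k\ge1}e^{-ck}=\frac{K}{e^c-1},
\]
uniformly in $t$.

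The only step I expect to require care is spotting the recursion for $S_t$ and the resulting identity for $\rho_t$. A naive bound of the products $P_{s,t}$ via subadditivity or AM–GM gives no geometric decay, so the telescoping ratio $S_t/S_s$ is what makes the exponential rate appear. Everything after that identity is routine.
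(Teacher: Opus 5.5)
Your proof is correct and follows the paper's argument. Your identity $\rho_t=(S_t-1)/S_{t-1}$ is the paper's $\rho_t=\Lambda_t/(1+\Lambda_{t-1})$, your regrouping $\frac{S_t}{S_s}\prod_{i=s+1}^{t}(1-1/S_i)$ is the same telescoped product, and the converse is identical. The one small difference is in \eqref{eq:A5}: you telescope directly and get the sharper bound $Ke^{-cs}$, while the paper multiplies \eqref{eq:A3} by $1+\Lambda_{t-s}\le K$ and gets $K^2e^{-cs}$.
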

\begin{proof}
Define $\Lambda_0\coloneqq 0$. Since $\Lambda_t=\rho_t(1+\Lambda_{t-1})$, we have
$\rho_t=\Lambda_t/(1+\Lambda_{t-1})$.
Therefore, for $0\le s<t$,
\[
\prod_{i=s+1}^{t}\rho_i
=\prod_{i=s+1}^{t}\frac{\Lambda_i}{1+\Lambda_{i-1}}
=\frac{\Lambda_t}{1+\Lambda_s}\prod_{i=s+1}^{t-1}\frac{\Lambda_i}{1+\Lambda_i}.
\]
Using $\Lambda_t\le \Lambda$ and $\frac{\Lambda_i}{1+\Lambda_i}\le \frac{\Lambda}{1+\Lambda}=e^{-c}$,
\[
\prod_{i=s+1}^{t}\rho_i
\le \Lambda\Big(\frac{\Lambda}{1+\Lambda}\Big)^{t-s-1}
=(1+\Lambda)\Big(\frac{\Lambda}{1+\Lambda}\Big)^{t-s}
=Ke^{-c(t-s)},
\]
which proves \eqref{eq:A3}. For \eqref{eq:A5}, note that the parenthesis equals
$1+\Lambda_{t-s}\le 1+\Lambda=K$, hence multiplying \eqref{eq:A3} (with $s$ replaced by $t-s$)
gives \eqref{eq:A5}.

Conversely, assume \eqref{eq:A3} holds for some $K,c>0$. For each $t\ge1$, by definition,
\[
\Lambda_t
=\rho_t+\rho_t\rho_{t-1}+\cdots+\rho_t\rho_{t-1}\cdots\rho_1
=\sum_{k=1}^{t}\prod_{i=t-k+1}^{t}\rho_i .
\]
Fix $k\in\{1,\dots,t\}$ and set $s\coloneqq t-k$ so that $t-s=k$ and
$\prod_{i=t-k+1}^{t}\rho_i=\prod_{i=s+1}^{t}\rho_i$. Applying \eqref{eq:A3} yields
\[
\prod_{i=t-k+1}^{t}\rho_i \le K e^{-c(t-s)} = K e^{-ck}.
\]
Therefore,
\[
\Lambda_t \le \sum_{k=1}^{t} K e^{-ck} \le \sum_{k\ge1} K e^{-ck}
= K\frac{e^{-c}}{1-e^{-c}}=\frac{K}{e^c-1}.
\]
In particular, $\sup_t\Lambda_t\le K/(e^c-1)$.

\end{proof}

\begin{enumerate}

\item[(i)-(iii)]
All these items are identical in proof with the one from \cite[Theorem 4.1]{popescu2023averaging}.  

\item[(iv)] This has a different proof from \cite[Theorem 4.1 part (iv)]{popescu2023averaging}. Write $X_t=B_tX_{t-1}+\gamma_t$ with $B_t\coloneqq A_t-\mathcal{E}_t$. Work on $\mathcal P_1(\mathbb R^n)$ with
\[
D_\infty(\mu,\nu)\coloneqq\inf_{\alpha}\int\|x-y\|_\infty\,\alpha(dx,dy)=\inf\mathbb E\|Z-Z'\|_\infty,
\]
the $W_1$ metric induced by $\|\cdot\|_\infty$ (equivalent to the usual $W_1$). 

For the rest of the paper, we denote $\mathcal{L}(X)$ to be the law (distribution) of the random variable $X$.  

For any $\mu,\nu$ and $Z\sim\mu$, $Z'\sim\nu$, coupling $Z,Z'$ optimally and using the \emph{same} $\gamma_t$ gives the one–step contraction
\begin{equation}\label{eq:W1-contract}
D_\infty\big(\Law(B_tZ+\gamma_t),\,\Law(B_tZ'+\gamma_t)\big)\ \le\ \rho_t\,D_\infty(\mu,\nu).
\end{equation}

This mirrors \cite[A.11 in the Appendix]{popescu2023averaging} though the proof here is slightly different and more streamlined.   Notice the key condition here is that $\gamma_t$ is independent of $\mathcal{F}_{t-1}$ which is used to justify \eqref{eq:W1-contract}, particularly that the distribution of $(Z,\gamma_t)$ is constructed and compared to the distribution of $(Z',\gamma_t)$.  If the noise $\gamma_t$ were not independent of $\mathcal{F}_t$, we would have had problems defining things properly.

Next, we argue that by \eqref{eq:44}, $B_t\to B$ in $\|\cdot\|_\infty$. We claim $\|B\|_{\infty}<1$.
Indeed, if $\|B\|_{
\infty
}\ge1$, for any $1>\delta>0$, we have that $\|B_t\|_{\infty}\ge 1-\delta$ for large enough $t$, say $t\ge t_\delta$.  In particular, for $t-s$ large with $t\ge s\ge t_\delta$,
\[
(1-\delta)^{t-s}\le \prod_{i=s+1}^t \rho_i. 
\]
On the other hand, from \Cref{lem1} we get that for some constants $K>0$ and $c>0$ and all large $t-s$ that 
\[
(1-\delta)^{t-s}\le \prod_{i=s+1}^t \rho_i\le K e^{-c(t-s)}.
\]
Consequently we get that $1-\delta\le e^{-c}$ for any $\delta>0$ which is a contradiction.  Hence $\|B\|<1$.

Define the \emph{limit} kernel
\[
\mathcal T_\infty(\mu)\coloneqq\Law(BZ+\gamma),
\]
where $\mu$ is the distribution of $Z$.

Then $\mathcal T_\infty$ is a strict $\!D_\infty$–contraction with constant $\|B\|<1$, so it has a unique fixed point $\nu_\ast$.

Assume \eqref{eq:UI} which also implies that $\gamma_t\Rightarrow\gamma$. Then, also $D_\infty(\gamma_t,\gamma)\to0$.
Let $\mathcal T_t(\mu)\coloneqq\Law(B_tZ+\gamma_t)$. For any $\mu=\mathcal{L}(Z)$,
\begin{equation}\label{eq:kernel-perturb}
\begin{aligned}
D_\infty(\mathcal T_t\mu,\mathcal T_\infty\mu)
&\le D_\infty\!\big(\Law(B_tZ+\gamma_t),\Law(BZ+\gamma_t)\big)
 + D_\infty\!\big(\Law(BZ+\gamma_t),\Law(BZ+\gamma)\big)\\
&\le \|B_t-B\|_\infty\,\mathbb E\|Z\|_\infty \;+\; D_\infty(\gamma_t,\gamma).
\end{aligned}
\end{equation}
From \eqref{eq:VoC}, \Cref{lem1} and \eqref{eq:UI}, we have $\sup_t\mathbb E\|Y_t\|_\infty<\infty$.
Let $\mu_t\coloneqq\Law(Y_t)$ and $\nu_t\coloneqq\mathcal T_\infty^{\,t}(\mu_0)$.  Here we use $\mu_0=\mathcal{L}(Y_0)$ for the distribution of the initial condition. Using \eqref{eq:W1-contract} for $\mathcal T_\infty$ and \eqref{eq:kernel-perturb} with $\mu=\mu_{t-1}$,
\[
D_\infty(\mu_t,\nu_t)\ \le\ \|B\|\,D_\infty(\mu_{t-1},\nu_{t-1})
\;+\;\underbrace{\|B_t-B\|_\infty\,\mathbb E\|Y_{t-1}\|_\infty + D_\infty(\gamma_t,\gamma)}_{=:\alpha_t}.
\]
By \eqref{eq:44} the first term in $\alpha_t$ tends to $0$ and the second tends to $0$ by \eqref{eq:UI}. 

The next step is to invoke the following simple result.

\begin{lemma}\label{lem3}
Let $(d_t)_{t\ge1}$ and $(\alpha_t)_{t\ge1}$ be nonnegative  and satisfy $d_t\le \rho_t d_{t-1}+\alpha_t$ with $(\rho_t)$ obeying \eqref{eq:42} and $\alpha_t \xrightarrow[t\to\infty]{}  0$. Then $d_t\xrightarrow[t\to\infty]{} 0$.
\end{lemma}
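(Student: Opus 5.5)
Unrolling the recursion $d_t\le \rho_t d_{t-1}+\alpha_t$ gives the discrete variation-of-constants bound
\[
d_t \;\le\; \Big(\prod_{i=1}^{t}\rho_i\Big) d_0 \;+\; \sum_{s=1}^{t}\Big(\prod_{i=s+1}^{t}\rho_i\Big)\alpha_s ,
\]
with the convention that an empty product equals $1$. This follows by a one-line induction on $t$, using $\rho_t\ge 0$ and $d_t,\alpha_t\ge 0$. Next I will invoke \Cref{lem1}. Assumption \eqref{eq:42} supplies $\Lambda<\infty$, and \Cref{lem1} then gives constants $K=1+\Lambda$ and $c=\log(1+1/\Lambda)>0$ with $\prod_{i=s+1}^{t}\rho_i\le K e^{-c(t-s)}$ for $0\le s<t$. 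For $s=t$ the product is $1\le K$, so the same bound holds there as well.

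Plugging in, the first term is at most $K e^{-ct} d_0\to 0$. The second term is at most $K\sum_{s=1}^{t} e^{-c(t-s)}\alpha_s$, which is a discrete convolution of the summable geometric kernel $e^{-ck}$ with a null sequence. I will handle it by a standard split. Fix $\varepsilon>0$ and choose $N$ such that $\alpha_s\le\varepsilon$ for all $s>N$. Also set $M\coloneqq\sup_s\alpha_s<\infty$, which is finite because a convergent sequence is bounded.

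\begin{itemize}
\item For the tail $s>N$, the sum is at most $K\varepsilon\sum_{k\ge0}e^{-ck}=K\varepsilon/(1-e^{-c})$.
\item For the head $s\le N$, the sum is at most $K M N e^{-c(t-N)}$, which tends to $0$ as $t\to\infty$ with $N$ fixed.
\end{itemize}

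Hence $\limsup_t d_t\le K\varepsilon/(1-e^{-c})$ for every $\varepsilon>0$, so $d_t\to 0$.

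I do not expect a genuine obstacle here. The only point requiring care is that \eqref{eq:42} controls only sums of backward products, not each $\rho_t$ being below $1$. Individual $\rho_t$ may exceed $1$, so one cannot iterate a uniform contraction directly. The uniform exponential product bound from \Cref{lem1} is exactly what replaces that, and once it is available the remainder is the routine $\varepsilon$-splitting above.
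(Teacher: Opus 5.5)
Your proof is correct and follows the paper's argument: you unroll the recursion, apply the exponential product bound from \Cref{lem1}, and split the resulting convolution sum into a part where $\alpha$ is small and a part suppressed by the geometric kernel. The only cosmetic difference is where you cut. You cut at a fixed index $N$ chosen from $\varepsilon$, whereas the paper cuts at a fixed lag $s$, lets $t\to\infty$, and then lets $s\to\infty$.
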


\begin{proof}
Unwinding the recursion and using \eqref{eq:A3} we can justify for $1\le s\le t$
\[
\begin{split}
d_t\ &\le\ \Big(\prod_{i=1}^t\rho_i\Big)d_0\ +\ \sum_{j=0}^{t-1}\Big(\prod_{i=t-j+1}^{t}\rho_i\Big)\alpha_{t-j}\le K e^{-c(t-1)}d_0+ K\sum_{j=0}^{t-1}e^{-cj}\alpha_{t-j}\\ 
&\le K e^{-c(t-1)}d_0+ K\sum_{j=0}^{s}e^{-cj}\alpha_{t-j}+K\sum_{j=s+1}^{t}e^{-cj}\alpha_{t-j}\\
&\le K e^{-c(t-1)}d_0+ K\frac{\max_{t-s\le j\le t} {\alpha_j}}{1-e^{-c}}+\frac{Ke^{-c(s+1)}}{1-e^{-c}}\sup_{j\ge0}\alpha_j
\end{split}
\]
Now letting $t$ tend to infinity and using that $\alpha$ converges to $0$, we get that 
\[
0\le \limsup_{t} d_t\le \frac{Ke^{-c(s+1)}}{1-e^{-c}}\sup_{j\ge0}\alpha_j. 
\]
Now letting $s$ to infinity we conclude that $d_t$ converges to $0$. 

We thus get that 
\[
D_\infty(\mu_t,\nu_t) \xrightarrow[t\to\infty]{} 0,
\]
which amounts to   $\mu_t\Rightarrow\nu_\ast$ and thus the proof of this item of the Theorem.
\end{proof}

\item[(v)] Now we discuss the necessity of \eqref{eq:44}.

We construct a one-dimensional counterexample (external noise) with $\rho_t\le b_2<1$ so \eqref{eq:42} holds, but \eqref{eq:44} fails and $\Law(X_t)$ does not converge.

Take $n=1$, $A_t\equiv1$, and pick $0<b_1<b_2<1$. Let $B_t\in\{b_1,b_2\}$ on alternating blocks whose lengths $L_m\uparrow\infty$; then $B_t$ does not converge (violates \eqref{eq:44}) while $\rho_t=|B_t|\le b_2<1$ (so \eqref{eq:42} holds). Let $(\gamma_t)$ be i.i.d., integrable, non-degenerate. For fixed $b\in(0,1)$, the affine kernel $\mathcal T_b(\mu)=\Law(bZ+\gamma)$ is a strict $W_1$–contraction with unique fixed point $\pi_b$; after $L$ steps at coefficient $b$,
\[
W_1\big(\Law(X_{\text{end}}),\pi_b\big)\ \le\ b^{\,L}W_1\big(\Law(X_{\text{start}}),\pi_b\big).
\]
Choose $L_m$ with $b_j^{L_m}\le 2^{-m}$ on each $b_j$–block. Along block endpoints, the laws approach alternately $\pi_{b_1}$ and $\pi_{b_2}$.  Next we prove that the distributions of $\pi_b$ and $\pi_{b'}$ are different.  

\begin{lemma}
Let $(\gamma_t)_{t\ge0}$ be iid and non-degenerate.  Set $X_b=\sum_{k\ge0}b^k\gamma_k$ for $0<b<1$.  Then the distribution of  $X_b$ is $\pi_b$ and for $0<b\ne b'<1$, $\pi_b\ne \pi_{b'}$.  
\end{lemma}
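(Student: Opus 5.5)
First I will show that the series defining $X_b$ converges and that its law is the fixed point $\pi_b$ of $\mathcal T_b$. Integrability of $\gamma_0$ gives $\mathbb E\sum_{k\ge0} b^k\lvert\gamma_k\rvert = \mathbb E\lvert\gamma_0\rvert/(1-b)<\infty$. Hence the series converges almost surely and in $L^1$, and $X_b$ has finite first moment. Now split off the first term:
\[
X_b=\gamma_0+b\sum_{k\ge0}b^k\gamma_{k+1}.
\]
The sum on the right has the same law as $X_b$ (by the i.i.d. property) and is independent of $\gamma_0$. So $\Law(X_b)=\Law(\gamma+bZ)$ with $Z\sim\Law(X_b)$ independent of $\gamma$, that is, $\mathcal T_b(\Law(X_b))=\Law(X_b)$. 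Since $\mathcal T_b$ is a strict $W_1$-contraction on $\mathcal P_1(\mathbb R)$, its fixed point is unique, and therefore $\Law(X_b)=\pi_b$.

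For the distinction $\pi_b\ne\pi_{b'}$, my first attempt is to compare moments. With $m=\mathbb E\gamma_0$ we get $\mathbb E X_b=m/(1-b)$, which separates $b$ from $b'$ whenever $m\ne0$. When $m=0$ this fails, and I would move to characteristic functions instead. Let $\varphi$ be the characteristic function of $\gamma_0$ and $\psi_b$ that of $X_b$. By independence,
\[
\psi_b(u)=\prod_{k\ge0}\varphi(b^ku), \qquad \psi_b(u)=\varphi(u)\,\psi_b(bu).
\]
Suppose $\psi_b=\psi_{b'}=:\psi$ with $b<b'$. Since $\psi(0)=1$ and $\psi$ is continuous, $\psi$ is nonzero on a neighbourhood $\lvert u\rvert<\delta$. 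On that neighbourhood
\[
\varphi(u)=\frac{\psi(u)}{\psi(bu)}=\frac{\psi(u)}{\psi(b'u)},
\]
so $\psi(bu)=\psi(b'u)$ there, i.e. $\psi(v)=\psi(rv)$ with $r=b'/b>1$ for small $v$. Iterating toward $0$ gives $\psi(v)=\psi(r^{-k}v)\to\psi(0)=1$, hence $\psi\equiv1$ near $0$. Then $\varphi\equiv1$ near $0$, which forces $\mathbb E\gamma_0^2=0$ whenever the second derivative exists.

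The main obstacle is finishing this without assuming a second moment. I would use the following route. Since $\lvert\varphi\rvert\le1$ and $\lvert\varphi(u)\rvert=1$ for $\lvert u\rvert<\delta$, the standard fact about characteristic functions says $\gamma_0$ is a.s. concentrated on a lattice $c+(2\pi/u)\mathbb Z$ for every such $u$. Letting $u\to0$ pushes the lattice spacing to infinity, so $\gamma_0$ must be degenerate. Equivalently, $\varphi(u)=1$ gives $\mathbb E(1-\cos u\gamma_0)=0$, so $u\gamma_0\in2\pi\mathbb Z$ a.s. for all small $u$, which forces $\gamma_0=0$ a.s. Either way this contradicts non-degeneracy, and so $\pi_b\ne\pi_{b'}$.

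For the vector case $n>1$ the paper takes $n=1$ in (v), so the scalar argument suffices. The argument also only needs independence and identical distribution, which the lemma assumes.
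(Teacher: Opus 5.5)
Your proposal is correct and follows essentially the same route as the paper: the fixed-point identity $\psi_b(u)=\varphi(u)\psi_b(bu)$, the deduction $\psi(bu)=\psi(b'u)$ near $0$, iteration of the scaling to force $\psi\equiv 1$ on a neighbourhood of $0$, and hence degeneracy of $\gamma_0$. The differences are cosmetic: you divide by $\psi$ rather than $\varphi$, and you conclude $\varphi\equiv 1$ directly instead of passing through $X_b=0$ a.s. You also usefully supply two details the paper leaves implicit, namely the verification that $\Law(X_b)$ is the fixed point $\pi_b$, and the $\mathbb{E}(1-\cos u\gamma_0)=0$ argument that a characteristic function equal to $1$ near $0$ forces a point mass at $0$.
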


\begin{proof}
It is elementary to check that $\pi_b$ is the distribution of $X_b=\sum_{k\ge0}b^k\gamma_k$.  

If the distribution of $\gamma_t$ is not degenerate and  $b\ne b'$, then $\pi_b$ is not the same as $\pi_{b'}$.  Indeed, if they have the same distribution, then the characteristic functions are the same.  

Let $\varphi(\xi)$ be the characteristic function  of $\gamma_0$, and let $\Psi_b(\xi)$ be the characteristic function of $\pi_b$. The random variable $X_b$ satisfies the distributional fixed-point equation $X_b \stackrel{d}{=} \gamma_0 + b X_b$. In terms of characteristic functions, this implies:
\[
\Psi_b(\xi) = \varphi(\xi) \Psi_b(b\xi).
\]
Assume, for the sake of contradiction, that $\pi_b = \pi_{b'}$ with $0 < b < b' < 1$. This implies $\Psi_b(\xi) = \Psi_{b'}(\xi)$ for all $t$. Substituting this into the fixed-point equation for both $b$ and $b'$:
\[
\varphi(\xi) \Psi_b(b\xi) = \Psi_b(\xi) = \Psi_{b'}(\xi) = \varphi(\xi) \Psi_b(b'\xi).
\]
Since $\varphi(\xi)$ is continuous and $\varphi(0)=1$, there exists a neighborhood $(-\delta, \delta)$ where $\varphi(t) \neq 0$. For $\xi$ in this neighborhood, we can divide by $\varphi(\xi)$:
\[
\Psi_b(bt) = \Psi_b(b't).
\]
Let $\xi = b't$ and $\lambda = b/b'$. Since $0 < b < b'$, we have $0 < \lambda < 1$. The equation becomes $\Psi_b(\lambda \xi) = \Psi_b(\xi)$. Iterating this relation $n$ times yields:
\begin{equation}\label{e:chid}
\Psi_b(\xi) = \Psi_b(\lambda^n \xi).
\end{equation}
Taking the limit as $n \to \infty$, and using the continuity of characteristic functions:
\[
\Psi_b(\xi) = \lim_{n \to \infty} \Psi_b(\lambda^n \xi) = \Psi_b(0) = 1.
\]
Combining this with \Cref{e:chid}, we can argue that $\Psi_b(\xi) = 1$ for any $|\xi|<\delta$, thus $\pi_b$ is degenerate at $0$ (i.e., $X_b = 0$ almost surely). By the definition $X_b = \gamma_0 + bX_b$, this implies $\gamma_0 = 0$ almost surely. This contradicts the assumption that $\gamma_t$ is non-degenerate. Therefore, $\pi_b \neq \pi_{b'}$. \qedhere
\end{proof}
Hence $\Law(X_t)$ has two distinct subsequential limits and does not converge. This proves the necessity claim.\qedhere
\end{enumerate}
\end{proof}

\Cref{t:35} provides the contraction machinery needed for the robustness result below, where we combine it with the mixed-norm bounds of \cref{sec:timevarmixed}.

\subsection{A stochastic variant}
In this setting, the averaging–learning recursion is influenced by an \emph{exogenous disturbance} (or forcing) term:
\[
X_{t+1}=A_tX_t+\mathcal{E}_t(\bar\sigma-X_t)+r_t.
\]
Here, $r_t$ models external shocks that are not produced by the averaging or learning dynamics themselves (for instance, shared environmental influences,  or common noise). Our objective is to demonstrate that the mixed-norm two-step contraction developed earlier leads to a robust stability result for the centered process

\[
Y_t:=X_t-\bar\sigma,
\]
which evolves according to $Y_{t+1}=B_tY_t+r_t$, where $B_t:=A_t-\mathcal{E}_t$.

The contraction bounds control how past shocks accumulate. If the disturbance vanishes (almost surely or in
probability), then the process tracks the noiseless limit and $X_t\to\bar\sigma$ in the corresponding
mode; this yields parts~(a)--(b) of Theorem~\ref{thm:norm:noisy-prob}. If the disturbance persists, pointwise
convergence is typically not the right notion. Instead, under asymptotic time-homogeneity $B_t\to B$ and an
asymptotically stationary noise assumption in Wasserstein distance, the law of $X_t$ converges to a limiting
distribution; this is made precise in part~(c). In this sense, Theorem~\ref{thm:norm:noisy-prob} is a robust
counterpart of Theorem~\ref{thm:normcondgeneral}: vanishing shocks recover the deterministic limit, whereas persistent shocks lead to convergence in
distribution to a limiting law.
\begin{theorem}[Noisy convergence under probabilistic modes]\label{thm:norm:noisy-prob}
Take the recursion
\[
X_{t+1}\;=\;A_tX_t+\mathcal{E}_t(\bar\sigma-X_t)+r_t,\qquad t=0,1,2,\dots
\]
and let $Y_t\coloneqq X_t-\bar\sigma$ and $B_t\coloneqq A_t-\mathcal{E}_t$.  Also let
\[
V_t(i)\coloneqq\big|A_t(i,i)-\mathcal{E}_t(i)\big|,\qquad i=1,\dots,n.
\]
Let $\tau_t,\tau'_t$ be real numbers such that $0\le \tau'_t<\tau_t\le n$, and assume that for every $t\ge0$,
\begin{equation}\label{eq:normcondgeneral-noisy}
\tr A_t-\mathbf{1}^\top V_t\ge \tau_t
\qquad\text{and}\qquad
|B_t(i,j)|\le \frac{1}{n-\tau'_t}\;\;\;\forall\,i,j\in\{1,\dots,n\}.
\end{equation}
Define the two-step ``gains''
\[
\rho^{(0)}_k\coloneqq\frac{n-\tau_{2k}}{n-\tau'_{2k+1}},
\qquad
\rho^{(1)}_k\coloneqq\frac{n-\tau_{2k+1}}{n-\tau'_{2k+2}},
\qquad k=0,1,2,\dots
\]
and assume the (Theorem~\ref{t:35}-type) summability bounds
\begin{equation}\label{eq:rho-sum-bounded}
\sup_{k\ge1}\Big\{\rho^{(\ell)}_k+\rho^{(\ell)}_k\rho^{(\ell)}_{k-1}+\cdots+\rho^{(\ell)}_k\rho^{(\ell)}_{k-1}\cdots\rho^{(\ell)}_1\Big\}<\infty,
\qquad \ell\in\{0,1\}.
\end{equation}

Then:

\begin{enumerate}
\item[\textup{(a)}] 
If $r_t\to0$ almost surely, then $X_t\to\bar\sigma$ almost surely.

\item[\textup{(b)}] 
If $r_t\to0$ in probability, then $X_t\to\bar\sigma$ in probability.

\item[\textup{(c)}] 
Assume $\{r_t\}_{t\ge0}$ are independent and integrable and 
\begin{equation}\label{eq:slow-variation-noisy}
B_t\xrightarrow[t\to\infty]{}B.
\end{equation}
If moreover there exists an integrable $r$ such that
\begin{equation}\label{eq:W1-noise-conv}
W_1(r_t,r)\xrightarrow[t\to\infty]{}0,
\end{equation}
then there exists an $n$-dimensional random vector $X_\infty$ such that $X_t\Rightarrow X_\infty$ as $t\to\infty$.
Moreover, without \eqref{eq:slow-variation-noisy} the conclusion can fail even with integrable i.i.d.\ noise unless $r_t$ is a.s.\ constant.
\end{enumerate}
\end{theorem}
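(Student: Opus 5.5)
The plan is to reduce everything to the two-step recursion and then reuse the machinery of \Cref{t:35} separately along even and odd times. From $Y_{t+1}=B_tY_t+r_t$ we get
\[
Y_{t+2}=B_{t+1}B_tY_t+\big(B_{t+1}r_t+r_{t+1}\big).
\]
By \Cref{lem:MNmixednorms} and the bounds $\norm{B_t}_{1\to\infty}\le 1/(n-\tau'_t)$ (from \eqref{eq:1toinf}) and $\norm{B_t}_{\infty\to1}\le n-\tau_t$ (from \Cref{lem:infto1<=}), we have $\norm{B_{t+1}B_t}_\infty\le (n-\tau_t)/(n-\tau'_{t+1})$. Setting $Z^{(\ell)}_k\coloneqq Y_{2k+\ell}$ and $\tilde r^{(\ell)}_k\coloneqq B_{2k+\ell+1}r_{2k+\ell}+r_{2k+\ell+1}$, this gives, for $\ell\in\{0,1\}$,
\[
\norm{Z^{(\ell)}_{k+1}}_\infty\le \rho^{(\ell)}_k\norm{Z^{(\ell)}_k}_\infty+\norm{\tilde r^{(\ell)}_k}_\infty .
\]
This is exactly the form \eqref{eq:one-step}, with gains satisfying \eqref{eq:rho-sum-bounded}, the analogue of \eqref{eq:42}. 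So \Cref{lem1} yields $\prod_{i=s+1}^{k}\rho^{(\ell)}_i\le Ke^{-c(k-s)}$ for each parity.

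For (a) and (b), we first need $\sup_t\norm{B_t}_\infty<\infty$. This follows from the entrywise bound $\lvert B_t(i,j)\rvert\le 1/(n-\tau'_t)\le 1/(n-\tau_t)$, provided $\tau_t$ stays away from $n$. Otherwise we use the cruder bound $\norm{B_t}_\infty\le n\,\norm{B_t}_{1\to\infty}$ together with a mild uniformity remark; I expect to note this explicitly. With that bound, $r_t\to0$ a.s. (resp. in probability) implies $\tilde r^{(\ell)}_k\to0$ a.s. (resp. in probability). Then \Cref{lem3}, applied pathwise with $d_k=\norm{Z^{(\ell)}_k}_\infty$ and $\alpha_k=\norm{\tilde r^{(\ell)}_k}_\infty$, gives (a) on each parity subsequence, hence for the whole sequence. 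For (b), use the variation-of-constants bound \eqref{eq:VoC} for the two-step chain: the sum $\sum_j Ke^{-cj}\norm{\tilde r_{k-j}}_\infty$ is split into a recent window, which is small in probability, and a geometrically weighted tail, which is small uniformly. This is the same argument as in \Cref{t:35}(ii).

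For (c), I would work with the two-step kernels $\mathcal T^{(\ell)}_k(\mu)=\Law(B_{2k+\ell+1}B_{2k+\ell}Z+\tilde r^{(\ell)}_k)$. Independence of the $r_t$ makes $\tilde r^{(\ell)}_k$ independent of $Z^{(\ell)}_k$, which depends only on $r_s$ with $s<2k+\ell$. This is the key point that justifies the synchronous coupling bound \eqref{eq:W1-contract}, with constant $\rho^{(\ell)}_k$. From $B_t\to B$ and $W_1(r_t,r)\to0$ we get $B_{t+1}B_t\to B^2$ and $\tilde r^{(\ell)}_k\Rightarrow Br'+r''$ in $W_1$, where $r',r''$ are independent copies of $r$. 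As in the proof of \Cref{t:35}(iv), the exponential product bound forces $\norm{B^2}_\infty<1$. The limit kernel $\mathcal T_\infty(\mu)=\Law(B^2Z+Br'+r'')$ is then a strict contraction with fixed point $\nu_*$. Running the perturbation estimate \eqref{eq:kernel-perturb} and \Cref{lem3} shows $\Law(Z^{(\ell)}_k)\to\nu_*$ for both parities, which requires $\sup_t\mathbb E\norm{Y_t}_\infty<\infty$ from the variation-of-constants bound.

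The main obstacle is to show that the even and odd subsequences have the \emph{same} limit. Since $\nu_*$ is the fixed point of the single limit two-step kernel, both parities converge to it automatically. Alternatively, $\nu_*$ coincides with the fixed point of the one-step kernel $\mu\mapsto\Law(BZ+r)$, since $\rho(B)<1$ and the one-step fixed point is also fixed by the two-step kernel. This yields $X_\infty=\bar\sigma+Y_\infty$ with $Y_\infty\sim\nu_*$.

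The final negative claim is inherited from \Cref{t:35}(v). The counterexample there has $n=1$, $A_t\equiv1$, and alternating $B_t\in\{b_1,b_2\}$ on growing blocks. It satisfies \eqref{eq:normcondgeneral-noisy} with a suitable choice of $\tau_t=1-b_{j}$ and a smaller $\tau'_t$ (constant in $t$), and it satisfies \eqref{eq:rho-sum-bounded}, yet produces two distinct subsequential laws $\pi_{b_1}\ne\pi_{b_2}$.
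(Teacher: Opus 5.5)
The step that fails is in (b): the geometrically weighted tail $K\sum_{j>s}e^{-cj}\norm{\tilde r^{(\ell)}_{k-j}}_\infty$ is not ``small uniformly''. Convergence in probability controls how often $r_t$ is large, not how large it is when it is large. A shock of size $e^{cj}$ at lag $j$ is therefore not damped by the weight $e^{-cj}$.

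In fact (b) is false as stated. Take $n=1$, $A_t\equiv1$ and $\mathcal{E}_t\equiv1-b$ with $b\in(0,1)$, so $B_t\equiv b$. With $\tau_t\equiv1-b$ and $\tau'_t\equiv0$, every hypothesis holds and $\rho^{(\ell)}_k\equiv b$. Let $Y_0=0$, and let the $r_u$ be independent with $\mathbb{P}(r_u=b^{-u})=1/(u+1)=1-\mathbb{P}(r_u=0)$, so $r_t\to0$ in probability. Then $Y_t=\sum_{u=0}^{t-1}b^{t-1-u}r_u$ has nonnegative terms. When $r_u\neq0$ the $u$-th term equals $b^{t-1-2u}$, which is at least $1$ for $u\ge m_t:=\lceil (t-1)/2\rceil$. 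Hence $\mathbb{P}(Y_t\ge1)\ge1-\prod_{u=m_t}^{t-1}\frac{u}{u+1}=1-\frac{m_t}{t}\to\frac12$, so $X_t\not\to\bar\sigma$ in probability.

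The paper's own proof of (b) only invokes \Cref{t:35}(ii), and the same example (with $\rho_t\equiv b$) refutes that too. So this is not an idea you missed relative to the paper; the claim needs an extra hypothesis. A uniform fractional moment suffices, e.g.\ $\sup_t\mathbb{E}\norm{r_t}_\infty^\delta<\infty$ for some $\delta\in(0,1]$. Subadditivity of $x\mapsto x^\delta$ then gives $\mathbb{E}\big(\sum_{j>s}Ke^{-cj}\norm{\tilde r^{(\ell)}_{k-j}}_\infty\big)^\delta\le Ce^{-c\delta s}$ uniformly in $k$. Markov's inequality then makes your window/tail split go through.

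A smaller point: your bound for $\sup_t\norm{B_t}_\infty$ is not uniform under the hypotheses, since both $n/(n-\tau_t)$ and $n/(n-\tau'_t)$ may blow up. Use the learning-mass half of \eqref{eq:normcondgeneral-noisy} instead: $\norm{B_t}_\infty\le\norm{B_t}_{\infty\to1}\le n-\tau_t\le n$, by \Cref{lem:infto1<=} and $\norm{M\mathbf{v}}_\infty\le\norm{M\mathbf{v}}_1$. This bound is also what makes the constant $C$ above independent of $k$.

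With these fixes, your arguments for (a), (c) and the closing counterexample are the paper's block-recursion proof: \Cref{lem:block-rec} plus \Cref{t:35} on each parity, with both parities converging to the fixed point of $\mu\mapsto\Law(B^2Z+Br'+r'')$. Your explicit checks supply details the paper leaves implicit: the independence of $\tilde r^{(\ell)}_k$ from $Z^{(\ell)}_k$, that $\norm{B^2}_\infty<1$, and that the $n=1$ example satisfies \eqref{eq:normcondgeneral-noisy} and \eqref{eq:rho-sum-bounded}.
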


\begin{lemma}\label{lem:block-rec}
Let $Y_{t+1}=B_tY_t+r_t$.
Define the even and odd blocks
\[
Z^{(0)}_k\coloneqq Y_{2k},\qquad Z^{(1)}_k\coloneqq Y_{2k+1}.
\]
Then
\begin{align*}
Z^{(0)}_{k+1} &= C^{(0)}_k Z^{(0)}_k + \gamma^{(0)}_k,
\qquad
C^{(0)}_k\coloneqq B_{2k+1}B_{2k},\quad \gamma^{(0)}_k\coloneqq B_{2k+1}r_{2k}+r_{2k+1},\\
Z^{(1)}_{k+1} &= C^{(1)}_k Z^{(1)}_k + \gamma^{(1)}_k,
\qquad
C^{(1)}_k\coloneqq B_{2k+2}B_{2k+1},\quad \gamma^{(1)}_k\coloneqq B_{2k+2}r_{2k+1}+r_{2k+2}.
\end{align*}
Moreover, under \eqref{eq:normcondgeneral-noisy},
\[
\|C^{(0)}_k\|_\infty\le \rho^{(0)}_k,
\qquad
\|C^{(1)}_k\|_\infty\le \rho^{(1)}_k.
\]
\end{lemma}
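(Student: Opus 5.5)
The plan has two parts: first unroll the recursion over two consecutive steps to get the block recursions, then bound the two-step matrices by the mixed-norm factorization already used in \Cref{thm:normcondgeneral}.

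\textbf{Block recursion.} Starting from $Y_{t+1}=B_tY_t+r_t$ with $t=2k$, we have $Y_{2k+1}=B_{2k}Y_{2k}+r_{2k}$. Substituting this into $Y_{2k+2}=B_{2k+1}Y_{2k+1}+r_{2k+1}$ gives
\[
Y_{2k+2}=B_{2k+1}B_{2k}Y_{2k}+B_{2k+1}r_{2k}+r_{2k+1}.
\]
This is exactly the stated recursion for $Z^{(0)}$, with $C^{(0)}_k$ and $\gamma^{(0)}_k$ as defined. The odd-indexed block is identical with every index shifted by one: start from $Y_{2k+1}$, apply $B_{2k+1}$ and then $B_{2k+2}$. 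This yields $C^{(1)}_k=B_{2k+2}B_{2k+1}$ and $\gamma^{(1)}_k=B_{2k+2}r_{2k+1}+r_{2k+2}$. This part is purely algebraic.

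\textbf{Norm bounds.} For the bound on $C^{(0)}_k$, apply \Cref{lem:MNmixednorms} with $p=q=\infty$ and intermediate $r=1$:
\[
\|B_{2k+1}B_{2k}\|_\infty\le \|B_{2k+1}\|_{1\to\infty}\,\|B_{2k}\|_{\infty\to1}.
\]
By \eqref{eq:1toinf} and the entrywise part of \eqref{eq:normcondgeneral-noisy},
\[
\|B_{2k+1}\|_{1\to\infty}=\max_{i,j}|B_{2k+1}(i,j)|\le \frac{1}{n-\tau'_{2k+1}}.
\]
By \Cref{lem:infto1<=} together with the trace part of \eqref{eq:normcondgeneral-noisy},
\[
\|B_{2k}\|_{\infty\to1}\le n-\tr A_{2k}+\mathbf 1^\top V_{2k}\le n-\tau_{2k}.
\]
Multiplying the two bounds gives $\rho^{(0)}_k$. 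The bound on $C^{(1)}_k$ follows verbatim with the indices $2k+1$ and $2k+2$, which gives $\rho^{(1)}_k$.

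\textbf{Points to check.} There is no genuine obstacle, only bookkeeping.
\begin{itemize}
\item The hypothesis $\tau'_t<\tau_t\le n$ gives $n-\tau'_t>0$, so the entrywise bound $1/(n-\tau'_t)$ is well defined.
\item Each factor must be evaluated at the correct time index: the $1\to\infty$ factor belongs to the later matrix, and the $\infty\to1$ factor to the earlier one.
\item The $\|\cdot\|_\infty$ used here is the induced $\infty\to\infty$ norm, which matches the convention of \Cref{lem:MNmixednorms}.
\end{itemize}
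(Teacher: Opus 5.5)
Your proposal is correct and matches the paper's proof: you unroll two steps to get the block recursions, then apply \Cref{lem:MNmixednorms} with $p=q=\infty$ and $r=1$. The entrywise bound and \Cref{lem:infto1<=} then give $\|C^{(0)}_k\|_\infty\le (n-\tau_{2k})/(n-\tau'_{2k+1})=\rho^{(0)}_k$, and the odd case is identical with shifted indices.
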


\begin{proof}
The block recursions follow by expanding two steps:
\[
Y_{2k+2}=B_{2k+1}Y_{2k+1}+r_{2k+1}
=B_{2k+1}(B_{2k}Y_{2k}+r_{2k})+r_{2k+1}.
\]
The odd block is analogous.

For the gain bounds, use mixed-norm submultiplicativity (\Cref{lem:MNmixednorms}) with $p=q=\infty$ and $r=1$:
\[
\|C^{(0)}_k\|_\infty=\|B_{2k+1}B_{2k}\|_{\infty\to\infty}
\le \|B_{2k+1}\|_{1\to\infty}\,\|B_{2k}\|_{\infty\to1}.
\]
Thus, we obtain
\[
\|C^{(0)}_k\|_\infty\le \frac{1}{n-\tau'_{2k+1}}\,(n-\tau_{2k})=\rho^{(0)}_k.
\]
The odd case is identical.
\end{proof}


\begin{proof}[Proof of Theorem~\ref{thm:norm:noisy-prob}]
Since $A_t$ is row-stochastic, $A_t\bar\sigma=\bar\sigma$, hence the error satisfies
\[
Y_{t+1}=B_tY_t+r_t.
\]

\emph{Step 1 (reduce to block recursions and verify the Theorem~\ref{t:35} hypothesis).}
By \Cref{lem:block-rec}, both block processes satisfy
\[
Z^{(\ell)}_{k+1}=C^{(\ell)}_k Z^{(\ell)}_k+\gamma^{(\ell)}_k,\qquad \ell\in\{0,1\},
\]
with $\|C^{(\ell)}_k\|_\infty\le \rho^{(\ell)}_k$.  The boundedness condition
\eqref{eq:rho-sum-bounded} is exactly the hypothesis \eqref{eq:42} of Theorem~\ref{t:35}
applied to each block recursion (with $\rho_k$ replaced by $\rho^{(\ell)}_k$).

\emph{Step 2 (parts (a) and (b): vanishing noise).}
Assume first $r_t\to0$ a.s.
$\sup_t\|B_t\|_\infty<\infty$ (indeed $\|B_t\|_\infty\le n$), hence
\[
\|\gamma^{(0)}_k\|_\infty\le \|B_{2k+1}\|_\infty\,\|r_{2k}\|_\infty+\|r_{2k+1}\|_\infty \xrightarrow[k\to\infty]{}0
\quad\text{a.s.}
\]
(and similarly for $\gamma^{(1)}_k$).  Applying Theorem~\ref{t:35}(i) to each block recursion gives
$Z^{(0)}_k\to0$ a.s.\ and $Z^{(1)}_k\to0$ a.s., i.e.\ $Y_{2k}\to0$ and $Y_{2k+1}\to0$ a.s., so $Y_t\to0$ a.s.
Thus $X_t\to\bar\sigma$ a.s.

If $r_t\to0$ in probability, the same bound implies $\gamma^{(\ell)}_k\to0$ in probability for $\ell\in\{0,1\}$,
and Theorem~\ref{t:35}(ii) yields $Z^{(\ell)}_k\to0$ in probability, hence $Y_t\to0$ in probability and $X_t\to\bar\sigma$
in probability.

\emph{Step 3 (part (c): convergence in distribution under persistent noise).}
Assume now (c).  From $B_t\to B$, each block coefficient sequence $C^{(\ell)}_k$ satisfies, hence $C^{(\ell)}_k\to C^{(\ell)}$ for some limit matrix (and necessarily $C^{(0)}=C^{(1)}=B^2$).

Moreover, $\gamma^{(\ell)}_k$ is integrable since $r_t$ is integrable and $\sup_t\|B_t\|_\infty<\infty$.
Finally, from $B_{2k+1}\to B$ and $W_1(r_t,r)\to0$, standard stability of $W_1$ under Lipschitz maps
implies $W_1(\gamma^{(\ell)}_k,\gamma)\to0$ for the limit noise
\[
\gamma \;\stackrel{d}{=}\; B\widetilde r+\widetilde r',
\]
where $\widetilde r,\widetilde r'$ are independent with law $r$ (matching the two fresh noises inside each block).
(One can prove this by coupling $r_{2k}$ with $\widetilde r$ and $r_{2k+1}$ with $\widetilde r'$ and using that
$x\mapsto Bx$ is $\|B\|_\infty$-Lipschitz in $\|\cdot\|_\infty$.)

Therefore, Theorem~\ref{t:35}(iv) applies to each block recursion and yields
\[
Y_{2k}=Z^{(0)}_k \Rightarrow Y_\infty
\quad\text{and}\quad
Y_{2k+1}=Z^{(1)}_k \Rightarrow Y_\infty
\]
for some random vector $Y_\infty$ (indeed both parities converge to the \emph{same} limit because in the limit the
two-step kernel is $\mu\mapsto \Law(B^2Z+B\widetilde r+\widetilde r')$, which is a strict $W_1$-contraction under
\eqref{eq:rho-sum-bounded}, hence has a unique fixed point; both parities must converge to that unique fixed point).
Thus $Y_t\Rightarrow Y_\infty$ and $X_t=\bar\sigma+Y_t\Rightarrow \bar\sigma+Y_\infty=:X_\infty$.

The final ``moreover'' statement (failure without slow variation) follows by applying
Theorem~\ref{t:35}(v) to an appropriate one-dimensional block recursion (alternating stable coefficients on longer and longer blocks),
exactly as in the counterexample of Theorem~\ref{t:35}(v).
\end{proof}

\section{Conclusion}
In this paper, we have shown that the Averaging-Learning dynamics allow for quite nuanced behaviour. Agents can be social but not learn through the learning-rate matrix $\mathcal{E}$. This, however, imposes conditions on the averaging matrix $A$. The first half of the paper developed the concept of anchoring, using graph theory. The condensed anchoring property is not only new but useful to analyze networks where not all agents have access to perfect learning. In this sense, even when some agents are defective, society as a whole can converge to the truth.

When the network varies in time, spectral methods fail. We introduced a mixed-operator-norm framework that extracts two-step contraction from composing $\norm{\cdot}_{1 \to \infty}$ and $\norm{\cdot}_{\infty \to 1}$ bounds. No single step needs be contractive. This machinery is new to the consensus literature and applies broadly to products of time-varying sub-stochastic matrices. This mechanism is robust to vanishing noise and preserves convergence; persistent noise drives the process to a limiting law.

Several questions remain open. Problem \ref{prob:genconstantbound} asks whether a learning rate of $3$ for any agent forces instability. A positive answer would give a sharp threshold for overlearning. The gap between necessity and sufficiency for the zero-convergence of $A-\mathcal{E}$ when $A - \mathcal{E}$ has negative diagonal entries also deserves further study.

\appendix

\section{Appendix: The Open Problem} \label{appendix}

We would like to discuss related questions to the open Problem~\ref{prob:genconstantbound}. 

To make this section more readable we include the problem again here.  

\begin{problem}\label{}
Is it true that $\rho(A-\mathcal{E}) \geq 1$ for $\mathcal{E}$ an $n\times n$ diagonal matrix such that for some $i\in\{1,2\dots, n\}$, $\mathcal{E}(i)\ge 3$ and $A$ is any $n\times n$ row-stochastic matrix?      
\end{problem}

The first round is about some easier cases which can be tackled and then proceed with a more quantitative version of it.

In the following theorem, we have that if $A$ is a symmetric row-stochastic matrix with an agent of learning rate at least $2$, then $A-\mathcal{E}$ is not zero-convergent.

\begin{theorem} \label{thm:symmstoch2}
    Let $A$ be a symmetric row-stochastic matrix of size $n > 0$ and let $\mathcal{E}$ be a diagonal nonnegative matrix of size $n$.
    Suppose that there exists an $i \in \{1,2,\dots,n\}$ such that $\mathcal{E}(i) \geq 2$.
    Then $\rho(A-\mathcal{E}) \geq 1$.
\end{theorem}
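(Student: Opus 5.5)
Because $A$ is symmetric and $\mathcal{E}$ is diagonal, $A-\mathcal{E}$ is a real symmetric matrix. We therefore plan to use the Rayleigh quotient rather than any graph-theoretic argument. For a real symmetric matrix $M$, all eigenvalues are real and
\[
\rho(M) = \max_{\mathbf{v} \neq 0} \frac{\lvert \mathbf{v}^\top M \mathbf{v} \rvert}{\mathbf{v}^\top \mathbf{v}} \geq \lvert \mathbf{v}^\top M \mathbf{v} \rvert \quad \text{for every unit vector } \mathbf{v}.
\]
So it suffices to exhibit one unit vector $\mathbf{v}$ with $\mathbf{v}^\top (A-\mathcal{E}) \mathbf{v} \le -1$.

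The natural test vector is the standard basis vector $\mathbf{e}_i$, where $i$ is the agent with $\mathcal{E}(i) \ge 2$. Then
\[
\mathbf{e}_i^\top (A-\mathcal{E}) \mathbf{e}_i = A(i,i) - \mathcal{E}(i).
\]
This value is $\le -1$ only when $\mathcal{E}(i) \ge 1 + A(i,i)$. That holds for $A(i,i) \le 1$ only if $\mathcal{E}(i) \ge 2$ and $A(i,i) \le 1$, which is always true. Indeed, $A$ is row stochastic, so $0 \le A(i,i) \le 1$, hence
\[
A(i,i) - \mathcal{E}(i) \le 1 - 2 = -1.
\]
The smallest eigenvalue $\lambda_{\min}$ of $A-\mathcal{E}$ therefore satisfies $\lambda_{\min} \le -1$, and so $\rho(A-\mathcal{E}) \ge \lvert \lambda_{\min} \rvert \ge 1$.

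The argument is short, and the only point needing care is the justification that the Rayleigh quotient bounds the spectral radius. This is where symmetry is essential: for nonsymmetric $A$ the diagonal entry $A(i,i)-\mathcal{E}(i)$ only lies in the numerical range, not the spectrum's hull, which is why the general Problem~\ref{prob:genconstantbound} is open and needs the larger threshold $3$. I will state explicitly that, by the spectral theorem, $\min_{\lVert \mathbf{v} \rVert_2 = 1} \mathbf{v}^\top M \mathbf{v} = \lambda_{\min}(M)$ for symmetric $M$, and then conclude $\lambda_{\min}(A-\mathcal{E}) \le -1$. If one wants strictness information, the same computation shows $\rho(A-\mathcal{E}) > 1$ whenever $\mathcal{E}(i) > 2$ or $A(i,i) < 1$. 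By \Cref{thm:zeroconv}, $A-\mathcal{E}$ is then not zero-convergent.
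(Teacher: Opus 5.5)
Your proof is correct, but it reaches an eigenvalue $\le -1$ by a different tool than the paper. The paper applies Weyl's inequality to the symmetric pair $A$, $-\mathcal{E}$:
$\lambda_{\min}(A-\mathcal{E}) \le \lambda_{\max}(A)+\lambda_{\min}(-\mathcal{E}) \le 1-\mathcal{E}(i) \le -1$.
This uses only the extreme eigenvalues of the two summands. You instead evaluate the Rayleigh quotient at $\mathbf{e}_i$ and get $\lambda_{\min}(A-\mathcal{E}) \le A(i,i)-\mathcal{E}(i)$.

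Your route is more elementary, since it needs only the variational characterization of $\lambda_{\min}$ and no perturbation theorem. It is also slightly sharper, because $A(i,i)\le 1=\lambda_{\max}(A)$. It already gives the conclusion under $\mathcal{E}(i)\ge 1+A(i,i)$, and your strictness remark follows directly.

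The Weyl viewpoint, in turn, is a one-line citation and extends more naturally. Because $\lVert A\rVert_2=1$, each eigenvalue of $-\mathcal{E}$ moves by at most $1$. So if $k$ agents have $\mathcal{E}(j)\ge 2$, then $A-\mathcal{E}$ has at least $k$ eigenvalues (with multiplicity) in $(-\infty,-1]$, whereas a single test vector certifies only one.

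One wording slip: in the sentence beginning ``This value is $\le -1$ only when\dots'', the second ``only if'' runs the implication the wrong way. What you need, and then correctly display, is simply $A(i,i)-\mathcal{E}(i)\le 1-2=-1$.
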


\begin{proof}
    If $\lambda$ is an eigenvalue of $A$ then $\lambda \le 1$.
    On the other hand, if $\lambda$ is the smallest eigenvalue of $-\mathcal{E}$ then $\lambda \le -\mathcal{E}(i) \le -2$.
    By the Weyl's theorem~\cite[Theorem~4.3.1]{HJ85}, if $\lambda$ is the smallest eigenvalue of $A-\mathcal{E}$ then $\lambda \le -2+1 =-1$, which implies that $\rho(A-\mathcal{E}) \geq \lvert \lambda \rvert \geq 1$.
\end{proof}

\begin{remark}
    Theorem~\ref{thm:symmstoch2} does not contradict Theorem~\ref{thm:extendcondenseanchor}.
    Suppose that both assumptions of \Cref{thm:symmstoch2} and \Cref{thm:extendcondenseanchor} are true.
    There exists an $i$ such that $\mathcal{E}(i) \ge 2$ and hence, $A(i,i)=1$.
    It follows that $\{i\}$ is a sink SCC where $i$ is a non-anchor, so $A-\mathcal{E}$ cannot be condensely anchored.
\end{remark}

If the matrix $A$ in the assumption of Theorem~\ref{thm:symmstoch2} is not symmetric, then it is possible that $\rho(A-\mathcal{E})<1$.
For example, let
\[
A = \begin{pmatrix}
    1/5 & 0 & 4/5 \\
    1 & 0 & 0 \\
    0 & 1/5 & 4/5 \\
\end{pmatrix}
\; \text{and} \;\, \mathcal{E} = \begin{pmatrix}
    1/2 & 0 & 0 \\
    0 & 0 & 0 \\
    0 & 0 & 2 \\
\end{pmatrix}.
\]
Then $\rho(A-\mathcal{E}) < 0.92$.

\begin{proposition}
    Let $A$ be a row-stochastic matrix of size $n > 0$ and let $\mathcal{E}$ be a diagonal nonnegative matrix of size $n$.
    Suppose that there exists an $i \in \{1,2,\dots,n\}$ such that $\mathcal{E}(i) \geq 2$.
    Suppose also that $0 \leq \mathcal{E}(j) < 2A(j,j)$ for all $j \in \{1,2,\dots,n\}$ where $j \neq i$.
    Then $\rho(A-\mathcal{E}) \geq 1$.
\end{proposition}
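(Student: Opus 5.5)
The plan is to argue by contradiction through the determinant of $M \coloneqq I_n + A - \mathcal{E}$, which detects whether $A-\mathcal{E}$ can have all its eigenvalues inside the unit disc. Suppose $\rho(A-\mathcal{E})<1$. Every eigenvalue $\mu$ of $M$ has the form $1+\lambda$ with $\lvert \lambda\rvert<1$, so $\mu$ lies in the open disc of radius $1$ centred at $1$. Real eigenvalues of $M$ are therefore strictly positive. Non-real eigenvalues occur in conjugate pairs, since $M$ is real, and each pair contributes $\lvert\mu\rvert^2>0$ to the product. Hence $\det M>0$. It therefore suffices to show $\det M \le 0$ under the hypotheses of the proposition.

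First I examine the rows of $M$. For $j\neq i$, the diagonal entry is $M(j,j)=1+A(j,j)-\mathcal{E}(j)$, and the off-diagonal absolute row sum is $1-A(j,j)$. The hypothesis $0\le \mathcal{E}(j)<2A(j,j)$ gives $M(j,j)>1-A(j,j)\ge 0$. So every row $j\neq i$ is strictly diagonally dominant with positive diagonal. For row $i$, the entries off the diagonal are $A(i,k)\ge 0$, summing to $1-A(i,i)$, and the diagonal entry is $1+A(i,i)-e$, where $e\coloneqq\mathcal{E}(i)$.

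Next I regard $g(e)\coloneqq \det M$ as a function of $e$ alone, with everything else fixed. Expanding along row $i$, $g$ is affine in $e$ with slope $-\det N$. Here $N$ is $M$ with row and column $i$ deleted. $N$ is strictly diagonally dominant with positive diagonal, so by Gershgorin every eigenvalue of $N$ has positive real part, and the same conjugate-pair argument gives $\det N>0$. Thus $g$ is strictly decreasing, and it is enough to show $g(2)\le 0$.

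At $e=2$, row $i$ of $M$ has diagonal $A(i,i)-1=-(1-A(i,i))$. Let $\widetilde M$ be $M$ (at $e=2$) with row $i$ multiplied by $-1$. Then row $i$ of $\widetilde M$ has diagonal $1-A(i,i)\ge 0$, equal to the sum of the absolute values of its off-diagonal entries. So $\widetilde M$ is weakly diagonally dominant in row $i$ and strictly in all other rows. For $\delta>0$, the matrix $\widetilde M+\delta I_n$ is strictly diagonally dominant with positive diagonal, so as above $\det(\widetilde M+\delta I_n)>0$. Letting $\delta\to 0$ gives $\det\widetilde M\ge 0$. Since $\det \widetilde M=-g(2)$, we get $g(2)\le 0$. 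Because $g$ is decreasing, $g(\mathcal{E}(i))\le 0$ whenever $\mathcal{E}(i)\ge 2$, which contradicts $\det M>0$. Hence $\rho(A-\mathcal{E})\ge 1$.

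The step needing the most care is the sign bookkeeping. This covers the claim that strict diagonal dominance with positive diagonal forces a positive determinant: Gershgorin places every eigenvalue in the right half-plane, and the conjugate-pair argument then fixes the sign. It also covers the limiting argument at the boundary $e=2$, including the degenerate case $A(i,i)=1$, where row $i$ of $\widetilde M$ is zero and the conclusion $g(2)=0$ still holds.
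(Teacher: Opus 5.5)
Your proof is correct, but it takes a different route from the paper's. The paper applies Gershgorin directly to $A-\mathcal{E}$. The disk for row $i$ lies in $\{\mathrm{Re}\,z \le 1-\mathcal{E}(i)\}\subseteq\{\mathrm{Re}\,z\le -1\}$. Each disk for $j\neq i$ lies in $\{\mathrm{Re}\,z \ge 2A(j,j)-\mathcal{E}(j)-1\}\subseteq\{\mathrm{Re}\,z>-1\}$. So the row-$i$ disk is isolated, and by the counting part of Gershgorin's theorem it contains exactly one eigenvalue, whose modulus is at least $\mathcal{E}(i)-1\ge 1$.

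You use only the inclusion part of Gershgorin, together with $\det=\prod_k\lambda_k$, conjugate pairing, and a continuity argument showing that weakly diagonally dominant matrices with nonnegative diagonal have nonnegative determinant. The paper's route is two lines and quantitative, implicitly giving $\rho(A-\mathcal{E})\ge \mathcal{E}(i)-1$. However, it relies on the refined Gershgorin theorem, whose proof needs continuity of eigenvalues along a homotopy. Yours is elementary and self-contained.

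Your argument can be shortened and strengthened:
\begin{itemize}
\item The affine-in-$e$ monotonicity step is unnecessary. At $e=\mathcal{E}(i)$ itself, row $i$ of $-M$ has diagonal $\mathcal{E}(i)-1-A(i,i)\ge 1-A(i,i)$, so it is already weakly dominant.
\item The contradiction framing can be dropped. Since $\det(I+A-\mathcal{E})\le 0$ and non-real eigenvalues contribute positive factors $\lvert 1+\lambda\rvert^2$, there must be a real eigenvalue $\lambda\le -1$.
\item Replacing the shift $I$ by $(\mathcal{E}(i)-1)I$ keeps rows $j\neq i$ strictly dominant, because $\mathcal{E}(i)-1\ge 1$. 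It also leaves row $i$ of the negated matrix weakly dominant. This recovers the paper's sharper conclusion: a real eigenvalue $\lambda\le 1-\mathcal{E}(i)$.
\end{itemize}
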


\begin{proof}
    Note that
    \[
    (A(i,i)-\mathcal{E}(i))+(1-A(i,i)) = 1-\mathcal{E}(i) \leq -1
    \]
    while for all $j \in \{1,2,\dots,n\}$ where $j \neq i$, we have
    \[
    (A(j,j)-\mathcal{E}(j))-(1-A(j,j)) = 2A(j,j)-\mathcal{E}(j)-1 > -1.
    \]
    This tells us that the Gershgorin disk centered at $A(i,i)-\mathcal{E}(i)$ is disjoint from the rest of the Gershgorin disks centered at $A(j,j)-\mathcal{E}(j)$, where $j \neq i$.
    Hence, there exists precisely one eigenvalue $\lambda$ of $A-\mathcal{E}$ such that $\lambda$ belongs to the Gershgorin disk centered at $A(i,i)-\mathcal{E}(i)$, and therefore, $\rho(A-\mathcal{E}) \ge |\lambda| \geq 1$.
\end{proof}

\subsection{The quantitative version}

\begin{definition}[The extremal function $f_n(R)$]\label{def:fn}
Fix $n\ge 1$ and $R\ge 0$. Define
\[
f_n(R)\;:=\;\inf_{\mathcal{E},A}\ \rho(\mathcal{E}-A),
\]
where $\mathcal{E}=\diag(\mathcal{E}(1),\dots,\mathcal{E}(n))$ is diagonal with $\max_i \mathcal{E}(i)=\mathcal{E}(1)=R$ and
$A\in\R^{n\times n}$ is row substochastic.
\end{definition}

\begin{remark}
If $A$ is row-stochastic and $\mathcal{E}$ is diagonal, then lower bounds on $f_n(R)$
translate into lower bounds for $\rho(A-\mathcal{E})$ when $\max_i\mathcal{E}(i)=R$.
\end{remark}

\begin{problem}[Quantitative form of Problem~\ref{prob:genconstantbound}] \label{conj:dimensionfree}
There exists $R_0>0$ and $c>1$ such that for all $R\ge R_0$,
\[
\inf_{n\ge 1} f_n(R)\ \ge\ c.
\]
In particular, one expects $f_n(3)\ge 1$ for all $n$ (which would resolve
Problem~\ref{prob:genconstantbound} with the threshold $3$).
\end{problem}
\subsection{Dimension dependent estimates for \texorpdfstring{$f_n(R)$}{fn(R)}}\label{sec:general-bound}

We want to give some dimension dependent versions of the quantity $f_n(R)$.
We start by reproducing (and slightly adapting) the classical perturbation bound of
Ostrowski--Elsner to the $\|\cdot\|_\infty$ norm; see \cite{Ostrowski57}, \cite{Elsner1985OptimalSpectralVariation},
and the expositions in \cite{StewartSun1990MatrixPerturbationTheory} and
\cite{Gil2021PerturbationSurvey}.


\subsubsection{An \texorpdfstring{$\|\cdot\|_\infty$}{infinity-norm} Ostrowski--Elsner bound (determinant proof)}

Recall the one-sided \emph{spectral variation} (cf.\ \cite{Elsner1985OptimalSpectralVariation,Gil2021PerturbationSurvey})
\[
s_A(B)\;\coloneqq\;\max_{\mu\in\spec(B)}\min_{\lambda\in\spec(A)}|\lambda-\mu|,
\]
and the Hausdorff distance
$\mathrm{hd}(\spec(A),\spec(B))=\max\{s_A(B),\,s_B(A)\}$
(see, e.g., \cite[Chapter~IV]{StewartSun1990MatrixPerturbationTheory}).

\medskip

\begin{lemma}[Distance-to-spectrum via a determinant]\label{lem:dist-det}
Let $A\in\mathbb C^{n\times n}$ with eigenvalues $\lambda_1,\dots,\lambda_n$
(counted with algebraic multiplicity). Then for every $z\in\mathbb C$,
\[
\min_{1\le i\le n}|z-\lambda_i|^n
\;\le\;\prod_{i=1}^n|z-\lambda_i|
\;=\;|\det(zI-A)|.
\]
\end{lemma}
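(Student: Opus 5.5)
The claim follows from factoring the characteristic polynomial over $\mathbb{C}$. Every complex monic polynomial splits into linear factors, so we may write
\[
\det(zI-A)=\prod_{i=1}^n (z-\lambda_i),
\]
where $\lambda_1,\dots,\lambda_n$ are the eigenvalues of $A$, listed with algebraic multiplicity. This is exactly the multiset of eigenvalues appearing in the statement.

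First, take moduli of both sides of this identity. Since the modulus is multiplicative, this gives the stated equality
\[
|\det(zI-A)|=\prod_{i=1}^n|z-\lambda_i|.
\]

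Next, let $m\coloneqq\min_i|z-\lambda_i|$. Each factor satisfies $|z-\lambda_i|\ge m\ge 0$, and multiplying $n$ such nonnegative inequalities yields
\[
\prod_{i=1}^n|z-\lambda_i|\ge m^n,
\]
which is the stated inequality.

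No step is a real obstacle. The only point requiring care is that eigenvalues must be counted with algebraic multiplicity, so that the product has exactly $n$ factors and matches the degree-$n$ characteristic polynomial. If $z$ is itself an eigenvalue, then $m=0$ and both sides vanish, so the degenerate case is consistent and needs no separate argument.
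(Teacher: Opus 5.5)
Your argument is correct and is essentially the paper's proof: the equality comes from factoring the characteristic polynomial over $\mathbb{C}$ with eigenvalues counted by algebraic multiplicity, and the inequality comes from bounding each factor below by the minimum, which the paper phrases as the minimum factor being at most the geometric mean. The one step you use without stating it, $\min_i|z-\lambda_i|^n=(\min_i|z-\lambda_i|)^n$, holds because $x\mapsto x^n$ is increasing on $[0,\infty)$.
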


\begin{proof}
The identity $\prod_{i=1}^n(z-\lambda_i)=\det(zI-A)$ is the characteristic polynomial factorization.
The inequality is immediate since the minimum factor is at most the geometric mean.
This step appears routinely in determinant-based eigenvalue perturbation arguments; see, e.g.,
\cite[Equation~(29)]{Gil2021PerturbationSurvey}.
\end{proof}

\medskip

\begin{lemma}[A determinant-difference bound in $\|\cdot\|_\infty$]\label{lem:det-diff-infty}
For any $X,Y\in\mathbb C^{n\times n}$,
\[
|\det X-\det Y|
\;\le\; n\,\|X-Y\|_\infty\,\max\{\|X\|_\infty,\|Y\|_\infty\}^{\,n-1}.
\]
\end{lemma}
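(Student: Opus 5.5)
The plan is the standard telescoping argument, with multilinearity of the determinant in the rows. Set $\mu:=\max\{\|X\|_\infty,\|Y\|_\infty\}$. For $k=0,1,\dots,n$, let $Z_k$ be the matrix whose first $k$ rows are those of $Y$ and whose last $n-k$ rows are those of $X$. Then $Z_0=X$ and $Z_n=Y$, so
\[
\det X-\det Y=\sum_{k=1}^{n}\bigl(\det Z_{k-1}-\det Z_k\bigr).
\]
Now $Z_{k-1}$ and $Z_k$ differ only in row $k$. By linearity of the determinant in that row, $\det Z_{k-1}-\det Z_k=\det W_k$. Here $W_k$ has row $k$ equal to $\row_k(X)-\row_k(Y)$, rows $1,\dots,k-1$ taken from $Y$, and rows $k+1,\dots,n$ taken from $X$.

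The key step is to bound $|\det W|$ for a matrix $W$ with rows $w_1,\dots,w_n$ by $\prod_i \|w_i\|_1$, where $\|w_i\|_1$ is the absolute row sum. This is the $\ell_1$-row Hadamard-type inequality. I would prove it by expanding along the Leibniz formula:
\[
|\det W|\le\sum_{\pi}\prod_i |W(i,\pi(i))|\le\sum_{j_1,\dots,j_n}\prod_i |W(i,j_i)|=\prod_i\sum_j|W(i,j)|.
\]
Applied to $W_k$, row $k$ contributes at most $\|X-Y\|_\infty$, because every absolute row sum of $X-Y$ is at most its maximum. Each of the other $n-1$ rows is a row of $X$ or of $Y$, so its absolute row sum is at most $\mu$. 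Hence $|\det W_k|\le \|X-Y\|_\infty\,\mu^{n-1}$.

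Summing over $k=1,\dots,n$ and using the triangle inequality gives the claim
\[
|\det X-\det Y|\le n\,\|X-Y\|_\infty\,\mu^{n-1}.
\]
The only nonroutine point is the row-sum Hadamard bound. Its proof by dropping the permutation constraint, as above, is immediate, so I expect no real obstacle. One detail to note is that this works over $\mathbb{C}$ unchanged, with moduli in place of absolute values.
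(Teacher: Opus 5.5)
Your proposal is correct and is essentially the paper's proof: the same row-by-row telescoping via multilinearity, then the bound $|\det W|\le\prod_i\|\row_i(W)\|_1$ on each summand, with row $k$ controlled by $\|X-Y\|_\infty$ and the other $n-1$ rows by $\max\{\|X\|_\infty,\|Y\|_\infty\}$. The only difference is cosmetic: you derive the $\ell_1$-row determinant bound directly from the Leibniz expansion by dropping the permutation constraint, whereas the paper obtains it from Hadamard's inequality combined with $\|\cdot\|_2\le\|\cdot\|_1$.
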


\begin{proof}
Write $X$ and $Y$ row-wise: $X=(x_1^\top;\dots;x_n^\top)$ and $Y=(y_1^\top;\dots;y_n^\top)$.
Using multilinearity of the determinant in the rows,
\[
\det X-\det Y
=\sum_{k=1}^n \det\bigl(y_1^\top;\dots;y_{k-1}^\top;(x_k-y_k)^\top;x_{k+1}^\top;\dots;x_n^\top\bigr).
\]
For each term, apply Hadamard's inequality
\cite[Section~7.8]{HJ85} in the form
\[
|\det Z|\le \prod_{i=1}^n \| \text{row}_i(Z)\|_2
\;\le\;\prod_{i=1}^n \| \text{row}_i(Z)\|_1,
\qquad\text{and note}\qquad \|\text{row}_i(Z)\|_1\le \|Z\|_\infty.
\]
Thus each summand is bounded by
\[
\|x_k-y_k\|_1 \cdot \max\{\|X\|_\infty,\|Y\|_\infty\}^{\,n-1}
\;\le\;\|X-Y\|_\infty \cdot \max\{\|X\|_\infty,\|Y\|_\infty\}^{\,n-1}.
\]
Summing over $k=1,\dots,n$ gives the claim.
\end{proof}

\medskip

\begin{theorem}[Ostrowski--Elsner bound in $\|\cdot\|_\infty$]\label{thm:OE-infty}
Let $A,B\in\mathbb C^{n\times n}$ and let $\|\cdot\|_\infty$ be the induced matrix infinity norm.
Then
\[
s_A(B)\;\le\; n^{1/n}\,\|A-B\|_\infty^{1/n}\,
\Bigl(\|A\|_\infty+\|B\|_\infty\Bigr)^{1-1/n}.
\]
Consequently,
\[
\mathrm{hd}(\spec(A),\spec(B))
\;\le\; n^{1/n}\,\|A-B\|_\infty^{1/n}\,
\Bigl(\|A\|_\infty+\|B\|_\infty\Bigr)^{1-1/n}.
\]
\end{theorem}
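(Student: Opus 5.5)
Fix $\mu\in\spec(B)$; the aim is to bound $\min_{\lambda\in\spec(A)}|\mu-\lambda|$ by combining \Cref{lem:dist-det} with \Cref{lem:det-diff-infty}. Since $\mu$ is an eigenvalue of $B$, $\det(\mu I-B)=0$. By \Cref{lem:dist-det} applied to $A$ at $z=\mu$,
\[
\min_{\lambda\in\spec(A)}|\mu-\lambda|^n\le|\det(\mu I-A)|=|\det(\mu I-A)-\det(\mu I-B)|.
\]
Next, apply \Cref{lem:det-diff-infty} with $X=\mu I-A$ and $Y=\mu I-B$. Then $X-Y=B-A$, so $\|X-Y\|_\infty=\|A-B\|_\infty$. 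Moreover $\|X\|_\infty\le|\mu|+\|A\|_\infty$ and $\|Y\|_\infty\le|\mu|+\|B\|_\infty$.

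The key step is to control $|\mu|$. Since $\mu\in\spec(B)$, we have $|\mu|\le\rho(B)\le\|B\|_\infty$. Hence
\[
\max\{\|X\|_\infty,\|Y\|_\infty\}\le\|B\|_\infty+\max\{\|A\|_\infty,\|B\|_\infty\}\le\|A\|_\infty+\|B\|_\infty .
\]
This is the only place needing care. The naive bound would be $|\mu|+\max\{\|A\|_\infty,\|B\|_\infty\}$, and one must check it collapses to $\|A\|_\infty+\|B\|_\infty$. It does: if $\|A\|_\infty\ge\|B\|_\infty$ the bound reads $\|B\|_\infty+\|A\|_\infty$, and otherwise it reads $2\|B\|_\infty\le$ only after noting we instead bound $\|X\|_\infty\le\|B\|_\infty+\|A\|_\infty$ and $\|Y\|_\infty\le 2\|B\|_\infty$. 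The second of these is not $\le\|A\|_\infty+\|B\|_\infty$ in general.

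To avoid this, I would modify the decomposition so that only mixed factors appear. In the telescoping sum from the proof of \Cref{lem:det-diff-infty}, each summand has $k-1$ rows from $Y$, one difference row, and $n-k$ rows from $X$. Rows of $Y=\mu I-B$ have $\ell_1$-norm at most $|\mu|+\|B\|_\infty$. That is still up to $2\|B\|_\infty$, so this refinement does not fix the issue by itself.

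The cleaner route is to exploit symmetry of the zero set. Work directly with $|\det(\mu I-A)|$ and expand $\mu I-A=(\mu I-B)+(B-A)$ row-multilinearly. Every term except $\det(\mu I-B)=0$ contains at least one row of $B-A$. This again yields rows bounded by $|\mu|+\|B\|_\infty$, so it does not help either.

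I therefore expect the stated constant to be obtained by bounding $\max\{\|X\|_\infty,\|Y\|_\infty\}\le|\mu|+\max\{\|A\|_\infty,\|B\|_\infty\}$, with $|\mu|\le\|B\|_\infty$. In the case $\|B\|_\infty\le\|A\|_\infty$ this gives $\|A\|_\infty+\|B\|_\infty$ directly. In the other case I would first try to justify the constant $2\|B\|_\infty$. Failing that, I would accept $(\|A\|_\infty+\|B\|_\infty)$ read as $\|A\|_\infty+\|B\|_\infty$ up to this case split; if needed, the constant would be replaced by $2\max\{\|A\|_\infty,\|B\|_\infty\}$. This step is the main obstacle, and it is exactly where I suspect the paper's argument is terse.

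Granting the bound $M^{n-1}$ with $M=\|A\|_\infty+\|B\|_\infty$, we get $\min_\lambda|\mu-\lambda|^n\le n\|A-B\|_\infty M^{n-1}$. Taking $n$-th roots gives $s_A(B)\le n^{1/n}\|A-B\|_\infty^{1/n}M^{1-1/n}$.

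For the Hausdorff statement, note that the right-hand side is symmetric in $A$ and $B$. Swapping the roles of $A$ and $B$ bounds $s_B(A)$ by the same quantity, and taking the maximum gives the bound on $\mathrm{hd}(\spec(A),\spec(B))$.
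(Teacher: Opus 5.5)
You have found exactly the weak point, but you have not closed it. In the case $\|B\|_\infty>\|A\|_\infty$ your proposal simply grants the bound, so as written it does not prove the statement.

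The paper's own proof follows the route of your first paragraph. It then asserts $\max\{\|\mu I-A\|_\infty,\|\mu I-B\|_\infty\}\le\|A\|_\infty+\|B\|_\infty$ from $|\mu|\le\|B\|_\infty$. As you suspected, this step is unjustified and in fact false: take $A=0$, $B=\diag(1,-1)$, $\mu=1$, so that $\|\mu I-B\|_\infty=2>1=\|A\|_\infty+\|B\|_\infty$. Any argument that goes through \Cref{lem:det-diff-infty} with $Y=\mu I-B$ must bound rows of $\mu I-B$, whose $\ell_1$-norms are only controlled by $|\mu|+\|B\|_\infty\le 2\|B\|_\infty$. That route therefore only gives the bound with $\|A\|_\infty+\|B\|_\infty$ replaced by $\|B\|_\infty+\max\{\|A\|_\infty,\|B\|_\infty\}$. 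Swapping $A$ and $B$ does not rescue the case $\|B\|_\infty>\|A\|_\infty$, because it controls $s_B(A)$ rather than $s_A(B)$.

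The missing idea is Elsner's eigenvector trick, which removes $\mu I-B$ from the determinant entirely.
\begin{itemize}
\item Since $\spec(B)=\spec(B^\top)$, choose $y\neq0$ with $y^\top B=\mu y^\top$, so that $y^\top(\mu I-A)=y^\top(B-A)$.
\item Pick $k$ with $|y_k|=\|y\|_\infty$, and let $M'$ be $\mu I-A$ with its $k$-th row replaced by $y^\top(\mu I-A)=\sum_i y_i\row_i(\mu I-A)$. By multilinearity and the alternating property, $\det M'=y_k\det(\mu I-A)$.
\item The new row equals $y^\top(B-A)$ and has $\ell_1$-norm at most $\|y\|_1\|A-B\|_\infty\le n|y_k|\,\|A-B\|_\infty$.
\item Every other row of $M'$ is a row of $\mu I-A$, with $\ell_1$-norm at most $|\mu|+\|A\|_\infty\le\|A\|_\infty+\|B\|_\infty$.
\end{itemize}
The row-$\ell_1$ form of Hadamard's inequality used in \Cref{lem:det-diff-infty} then gives $|\det(\mu I-A)|=|\det M'|/|y_k|\le n\|A-B\|_\infty(\|A\|_\infty+\|B\|_\infty)^{n-1}$. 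Combining this with \Cref{lem:dist-det} and taking $n$-th roots yields the stated bound on $s_A(B)$ for all $A,B$. Your symmetry argument for the Hausdorff distance then goes through as you wrote it.
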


\begin{proof}
The argument follows the determinant-based perturbation strategy going back to
Ostrowski~\cite{Ostrowski57} and used in Elsner's bound
\cite{Elsner1985OptimalSpectralVariation}; see also \cite{Gil2021PerturbationSurvey}.

Fix any eigenvalue $\mu\in\spec(B)$.
Applying Lemma~\ref{lem:dist-det} with $z=\mu$,
\[
\min_{\lambda\in\spec(A)}|\lambda-\mu|^n \;\le\; |\det(\mu I-A)|.
\]
Since $\mu\in\spec(B)$, we have $\det(\mu I-B)=0$, hence
\[
|\det(\mu I-A)|
=|\det(\mu I-A)-\det(\mu I-B)|.
\]
Apply Lemma~\ref{lem:det-diff-infty} with $X=\mu I-A$ and $Y=\mu I-B$:
\[
|\det(\mu I-A)|
\le n\,\|A-B\|_\infty\,\max\{\|\mu I-A\|_\infty,\|\mu I-B\|_\infty\}^{\,n-1}.
\]
Now $\|\mu I-A\|_\infty\le |\mu|+\|A\|_\infty$ and $\|\mu I-B\|_\infty\le |\mu|+\|B\|_\infty$.
Also, for any induced norm, $|\mu|\le \rho(B)\le \|B\|_\infty$
(cf.\ \cite[Section~5.6]{HJ85}), hence
\[
\max\{\|\mu I-A\|_\infty,\|\mu I-B\|_\infty\}\le \|A\|_\infty+\|B\|_\infty.
\]
Combining,
\[
\min_{\lambda\in\spec(A)}|\lambda-\mu|^n
\le n\,\|A-B\|_\infty\,(\|A\|_\infty+\|B\|_\infty)^{n-1}.
\]
Taking $n$th roots and then the maximum over $\mu\in\spec(B)$ gives the bound on $s_A(B)$.
The Hausdorff bound follows since $\mathrm{hd}=\max\{s_A(B),s_B(A)\}$.
\end{proof}

\subsubsection{Application to \texorpdfstring{$f_n(R)$}{fn(R)} and a threshold \texorpdfstring{$R_n$}{Rn} for \texorpdfstring{$f_n(R)>1$}{fn(R)>1}}

Recall
\[
f_n(R)\;=\;\inf_{\mathcal{E},A}\rho(A-\mathcal{E}),
\]
where $\mathcal{E}=\mathrm{diag}(\mathcal{E}(1),\dots,\mathcal{E}(n))$ with $\max_i \mathcal{E}(i)=\mathcal{E}(1)=R$ and $A$ is substochastic (row sums $\le 1$).

Set
\[
M\coloneqq A-\mathcal{E},\qquad N\coloneqq -\mathcal{E}.
\]
Then $\rho(\mathcal{E}-A)=\rho(A-\mathcal{E})=\rho(M)$ and $M-N=A$.

Because $A$ is substochastic, $\|A\|_\infty\le 1$.
Moreover $\|N\|_\infty=\|\mathcal{E}\|_\infty=R$ and one checks row-wise that
\[
\|M\|_\infty=\|A-D\|_\infty\le R+1.
\]
Apply Theorem~\ref{thm:OE-infty} to $A=M$ and $B=N$:
\[
s_M(N)\;\le\; n^{1/n}\,\|M-N\|_\infty^{1/n}\,(\|M\|_\infty+\|N\|_\infty)^{1-1/n}
\;\le\; n^{1/n}\,(2R+1)^{1-1/n}.
\]
Since $-R\in\spec(N)$, there exists $\lambda\in\spec(M)$ such that
\[
|\lambda+R|\;\le\; n^{1/n}(2R+1)^{1-1/n}.
\]
Therefore
\begin{equation}\label{e:sp_1}
\rho(M)\;\ge\;|\lambda|\;\ge\; R-n^{1/n}(2R+1)^{1-1/n}.
\end{equation}
Taking the infimum over all admissible $(D,A)$ yields the general lower bound.

\begin{theorem}\label{t:lbf_n}
For every $n\ge 1$ and $R\ge 0$,
\begin{equation}
\label{eq:fn-lower}
f_n(R)\;\ge\;\Bigl(R-n^{1/n}(2R+1)^{1-1/n}\Bigr)^+.
\end{equation}
In particular, set
\[
\bar R_n \;:=\; n2^{\,n-1}+\frac{3n-1}{2} \;=\;\frac{n2^n+3n-1}{2}.
\]
Then for every $R>\bar R_n$ we have $f_n(R)>1$.
Consequently, if $R_n$ denotes the (unique) solution of
$R-n^{1/n}(2R+1)^{1-1/n}=1$, then $R_n\le \bar R_n$.
Moreover, since $\bar R_n\le n(2^{n-1}+2)$ for all $n\ge 1$,
the simpler sufficient condition $R\ge n(2^{n-1}+2)$ also implies $f_n(R)>1$.
\end{theorem}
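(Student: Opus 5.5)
The first claim, the lower bound \eqref{eq:fn-lower}, follows from \eqref{e:sp_1}. For every admissible pair $(\mathcal{E},A)$ we have $\rho(A-\mathcal{E})\ge R-n^{1/n}(2R+1)^{1-1/n}$, and trivially $\rho(A-\mathcal{E})\ge 0$. Taking the infimum over $(\mathcal{E},A)$ gives the positive part. The only check is the row-wise bound $\|M\|_\infty\le R+1$ used there. Each row of $A-\mathcal{E}$ has absolute sum at most $\sum_j |A(i,j)|+\mathcal{E}(i)\le 1+R$, since $A$ is nonnegative substochastic and $\max_i\mathcal{E}(i)=R$. With $\|A\|_\infty\le1$, this gives $\|M\|_\infty+\|N\|_\infty\le 2R+1$, exactly as in the display before \eqref{e:sp_1}.

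For the threshold, set $g(R)=R-n^{1/n}(2R+1)^{1-1/n}$. It suffices to show that $g(R)>1$ for $R>\bar R_n$. For $n=1$ we have $g(R)=R-1$ and $\bar R_1=2$, so the claim is immediate. For $n\ge2$, the condition $g(R)>1$ is equivalent to $(R-1)^n>n(2R+1)^{n-1}$, provided $R>1$. I would substitute $u=2R+1$, so that $R-1=(u-3)/2$, and the inequality becomes
\[
(u-3)^n>n2^n u^{n-1}, \quad\text{i.e.}\quad u\Bigl(1-\tfrac{3}{u}\Bigr)^n>n2^n .
\]
By Bernoulli, $(1-3/u)^n\ge 1-3n/u$ when $3/u\le1$, so the left side is at least $u-3n$. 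Hence it suffices that $u>3n+n2^n$, i.e. $R>(n2^n+3n-1)/2=\bar R_n$. This matches the stated formula, so Bernoulli is the intended step, and $u\ge3$ holds automatically in this range.

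For uniqueness of $R_n$ and $R_n\le\bar R_n$, note that $g$ is continuous and $g(0)=-n^{1/n}<1$. Since $g(R)\to\infty$, a solution exists. For uniqueness I would show that $g$ is strictly increasing wherever $g\ge0$ is relevant; alternatively, the map $R\mapsto (R-1)^n/(2R+1)^{n-1}$ is increasing for $R>1$. Its logarithmic derivative is $n/(R-1)-2(n-1)/(2R+1)>0$, because $n(2R+1)>2(n-1)(R-1)$. The equation $g(R)=1$ is equivalent to this ratio equalling $n$, and the ratio is monotone, so the solution is unique. Since $g>1$ beyond $\bar R_n$, it follows that $R_n\le\bar R_n$.

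The final comparison $\bar R_n\le n(2^{n-1}+2)$ reduces to $(3n-1)/2\le 2n$, which is trivial. The main obstacle is only keeping the Bernoulli step honest, namely verifying $u\ge3$ and $R>1$ in the range considered. Both hold because $\bar R_n\ge2$.
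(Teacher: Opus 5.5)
Your proof is correct. For the lower bound \eqref{eq:fn-lower} it is the paper's argument: quote \eqref{e:sp_1} and take the infimum.

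Your remark that $\norm{M}_\infty\le R+1$ is the only check is slightly optimistic. In the proof of Theorem~\ref{thm:OE-infty}, the step bounding $\norm{\mu I-B}_\infty$ by $\norm{A}_\infty+\norm{B}_\infty$ (in that theorem's notation) only yields $2\norm{B}_\infty$ in general. Here, however, $\mu=-R$ and $B=-\mathcal{E}$. So $\norm{\mu I-M}_\infty\le 2R+1$ and $\norm{\mathcal{E}-RI}_\infty\le R$ hold directly, and \eqref{e:sp_1} stands.

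The threshold step follows a slightly different route. You raise $g(R)>1$ to the $n$-th power, which needs $R>1$ (automatic here), and then apply Bernoulli to $(1-3/u)^n$. The paper instead substitutes $y=(2R+1)^{1/n}$ and rewrites $g(R)>1$ exactly as $y^{n-1}(y-2n^{1/n})>3$, with no sign condition. It then uses $y^n-a^n\le n\,y^{n-1}(y-a)$ for $y\ge a=2n^{1/n}$. After dividing by $y^n$, this is again Bernoulli, $(1-x)^n\ge 1-nx$, but with $x=1-a/y$ instead of $x=3/u$. So the two proofs linearize the same convexity inequality in different variables, and both land on exactly $2R+1>n2^n+3n$.

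Your monotonicity argument for $(R-1)^n/(2R+1)^{n-1}$ on $(1,\infty)$ supplies the uniqueness of $R_n$, which the paper only asserts in a parenthesis. The bound $R_n\le\bar R_n$ itself does not need uniqueness, since every root of $g=1$ lies at or below $\bar R_n$.

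Finally, the last claim needs the strict inequality $(3n-1)/2<2n$, not just $\le$. Your threshold result is for $R>\bar R_n$, and for $n=1$ one has $g(\bar R_1)=1$ exactly. The strict inequality of course holds.
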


\begin{proof}
The bound \eqref{eq:fn-lower} is exactly \eqref{e:sp_1}.

For the threshold, define
\[
g(R):=R-n^{1/n}(2R+1)^{1-1/n}.
\]
We want $g(R)>1$. Let $y:=(2R+1)^{1/n}$ so that $R=(y^n-1)/2$ and
\[
g(R)>1
\quad\Longleftrightarrow\quad
\frac{y^n-1}{2}-n^{1/n}y^{n-1}>1
\quad\Longleftrightarrow\quad
y^{n-1}\bigl(y-2n^{1/n}\bigr)>3.
\]

Let $a:=2n^{1/n}$. For any $y\ge a$,
\[
y^n-a^n=(y-a)\sum_{k=0}^{n-1}y^{n-1-k}a^k
\;\le\; n y^{n-1}(y-a),
\]
hence
\[
y^{n-1}(y-a)\;\ge\;\frac{y^n-a^n}{n}.
\]
Now take $R=\bar R_n$, so $y^n=2\bar R_n+1=n2^n+3n$ while $a^n=n2^n$, giving
\[
\frac{y^n-a^n}{n}=\frac{3n}{n}=3
\quad\Longrightarrow\quad
y^{n-1}(y-a)\ge 3
\quad\Longrightarrow\quad
g(\bar R_n)\ge 1.
\]
If $R>\bar R_n$, then $y^n-a^n>3n$ and the same chain yields
$y^{n-1}(y-a)>3$, i.e. $g(R)>1$, hence by \eqref{eq:fn-lower} we get $f_n(R)>1$.

Finally, $\bar R_n\le n(2^{n-1}+2)$ is immediate from
$\frac{3n-1}{2}\le 2n$.
\end{proof}

\subsection{\texorpdfstring{$f_n(R)$}{fn(R)} for \texorpdfstring{$n=1,2$}{n=1,2}}

We now consolidate the exact piecewise formulas and upper bounds for $f_n(R)$ when $n \in \{1, 2\}$ into a single unified theorem with streamlined proofs.

\begin{theorem}[Formulas and bounds for $f_n(R)$ for $n=1,2$]\label{thm:fn-small-n}
For $R \ge 0$, 
\[
f_1(R) = f_2(R) = (R-1)^+.
\]
\end{theorem}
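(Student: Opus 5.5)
The plan is to prove matching lower and upper bounds, $f_n(R)\ge (R-1)^+$ and $f_n(R)\le (R-1)^+$ for $n\in\{1,2\}$. The lower bound uses the fact that for $n\le 2$ the matrix $M\coloneqq A-\mathcal{E}$ has nonnegative off-diagonal entries, so its spectrum is real and its smallest eigenvalue is at most its smallest diagonal entry. The upper bound uses explicit diagonal constructions. Throughout, $A$ is nonnegative with row sums at most $1$, and $\mathcal{E}$ is diagonal and nonnegative with $\max_i\mathcal{E}(i)=\mathcal{E}(1)=R$.

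\emph{Lower bound.} Since $\rho(M)\ge 0$ always, it suffices to show $\rho(M)\ge R-1$. For $n=1$ we have $M=a-R$ with $a\in[0,1]$, so $\rho(M)=|a-R|\ge R-a\ge R-1$. For $n=2$ write
\[
M=\begin{pmatrix} m_{11} & b\\ c & m_{22}\end{pmatrix},\qquad m_{11}=A(1,1)-R,\quad b,c\ge 0 .
\]
The discriminant of the characteristic polynomial is $(m_{11}-m_{22})^2+4bc\ge 0$, so both eigenvalues are real. The smaller one is
\[
\lambda_{\min}=\frac{m_{11}+m_{22}}{2}-\sqrt{\Bigl(\frac{m_{11}-m_{22}}{2}\Bigr)^2+bc}\;\le\;\frac{m_{11}+m_{22}}{2}-\frac{\lvert m_{11}-m_{22}\rvert}{2}=\min\{m_{11},m_{22}\}.
\]
Hence $\lambda_{\min}\le m_{11}\le 1-R$, which gives $\rho(M)\ge\lvert\lambda_{\min}\rvert\ge R-1$ whenever $R\ge 1$. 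The case $R<1$ needs nothing beyond $\rho(M)\ge 0$.

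\emph{Upper bound (constructions).}
\begin{itemize}
\item For $n=1$: if $R\ge 1$ take $A=1$, giving $\rho=R-1$; if $R<1$ take $A=R$ (admissible since $R\le 1$), giving $\rho=0$.
\item For $n=2$ and $R\ge 1$: take $A=I_2$ and $\mathcal{E}=\diag(R,1)$. This is admissible since $1\le R$, and $M=\diag(1-R,0)$, so $\rho(M)=R-1$.
\item For $n=2$ and $0\le R<1$: take $A=\diag(R,R)$, which is row substochastic, and $\mathcal{E}=\diag(R,R)$. Then $M=0$ and $\rho(M)=0$.
\end{itemize}
Combining the two bounds yields $f_1(R)=f_2(R)=(R-1)^+$.

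The only real obstacle is the lower bound for $n=2$, which rests on the realness of the spectrum from the sign pattern of $M$. This argument does not extend to $n\ge 3$, because Metzler matrices there can have complex eigenvalues. I would also double-check that the definition of $f_n$ allows the non-maximal entries of $\mathcal{E}$ and the row sums of $A$ to be chosen freely as above (e.g.\ $\mathcal{E}(2)\le R$ and $A=\diag(R,R)$). If $A$ were required to be row stochastic, the case $R<1$ would need a different construction.
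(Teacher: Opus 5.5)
Your proof is correct and takes essentially the same route as the paper: explicit diagonal constructions give the upper bound, and for $n=2$ the closed-form eigenvalues together with $A(1,2)A(2,1)\ge 0$ bound an extreme eigenvalue by a diagonal entry. The paper works with $\mathcal{E}-A$ and shows $\lambda_+\ge R-A(1,1)$, which is the mirror image of your bound $\lambda_{\min}\le A(1,1)-R$; its $n=2$ extremal example $\mathcal{E}=\diag(R,0)$, $A=\diag(\min(1,R),0)$ differs from yours only cosmetically.
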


\begin{proof}
First, observe that for any dimension $n$, if $0 \le R \le 1$, choosing the matrix $D = R I$ and $A = D$ ensures $A \ge 0$, row sums equal to $R \le 1$, and $M = D - A = 0$. Hence, $\rho(M) = 0$ and $f_n(R) = 0$. We now consider the regime $R > 1$.

\paragraph{Cases $n=1, 2$:}
For $n=1$, the matrix is a scalar $M = [R - A_{11}]$. With $0 \le A_{11} \le 1$, the minimum modulus is precisely $(R-1)^+$, trivially achieved at $A_{11}=\min(1, R)$.

For $n=2$, the strict upper bound is attained by the decoupled blocks $\mathcal{E} = \diag(R, 0)$ and $A = \diag(\min(1, R), 0)$. 

Now, for the lower bound, notice that the eigenvalues of $M$ are given by 
$$ \lambda_{\pm} = \frac{M_{11} + M_{22} \pm\sqrt{(M_{11} - M_{22})^2 + 4M_{12}M_{21}}}{2}. $$
Because $A_{12}=M_{12}\ge 0$ and $A_{21}=M_{21}\ge0$, the square root is strictly larger than $|M_{11}-M_{22}|$.  This gives us an unbreakable algebraic inequality:
$$ \lambda_+ \ge \frac{M_{11} + M_{22} + |M_{11} - M_{22}|}{2}.$$ 
Therefore:
$$ \lambda_+ \ge \max(M_{11}, M_{22}) \ge M_{11} = R - A_{11}.$$

Since $A_{11}\ge0$, this forces $\lambda_+\ge R-1$.  
\end{proof}

\small
\bibliographystyle{plainurl}
\bibliography{ReferencesJT.bib}

\end{document}